\pgfplotsset{compat=1.16}
\newcommand{\Desc}[2]{\State \makebox[10em][l]{#1}#2}
\newcommand{\newdim}{N}
\newcommand{\cumuldim}{\mathbf{n}}
\newcommand{\reals}{\mathbb{R}}
\newcommand{\node}{\mathbf{N}}
\newcommand{\core}{\mathcal{G}}
\newcommand{\tree}{\mathbf{T}}
\newcommand{\indexset}{\mathbf{I}}
\newcommand{\parents}{\mathbf{P}}
\newcommand{\successors}{\mathbf{S}}
\newcommand{\ranks}{\mathbf{R}}
\newcommand{\rank}{r}
\newcommand{\hierarchical}{\mathbf{H}}
\newcommand{\bigo}{\mathcal{O}}
\newcommand{\reshape}{\texttt{reshape}}
\newcommand{\oom}{${}^{\dagger}$}
\newcommand{\wt}{${}^{\ddagger}$}
\newcommand{\ouralgorithm}{\texttt{HT-RISE}}
\newtheorem{problem}{Problem}
\newtheorem{claim}{Claim}
\newcommand{\mat}[1]{\begin{bmatrix}#1\end{bmatrix}}
\newcommand{\tenprod}[2]{{_{#1}{\times}_{#2}}}
\definecolor{rackhamgreen}{HTML}{75988d}
\definecolor{tappanred}{HTML}{9a3324}
\definecolor{wavefieldgreen}{HTML}{a5a508}
\definecolor{matthaeiviolet}{HTML}{577294}
\definecolor{grey}{rgb}{0.5, 0.5, 0.49}
\DeclareMathOperator*{\pad}{\oplus}
\begin{document}

\title{Incremental Hierarchical Tucker Decomposition}

\author{
    \name Doruk Aksoy \email doruk@umich.edu\\
    \addr Department of Aerospace Engineering\\
    University of Michigan\\
    Ann Arbor, MI, USA
    \AND
    \name Alex A. Gorodetsky \email goroda@umich.edu\\
    \addr Department of Aerospace Engineering\\
    University of Michigan\\
    Ann Arbor, MI, USA
}

\editor{My editor}

    \maketitle

    \begin{abstract}
        We present two new algorithms for approximating and updating the hierarchical Tucker decomposition of tensor streams. The first algorithm, \textit{Batch Hierarchical Tucker - leaf to root} (\texttt{BHT-l2r}), proposes an alternative and more efficient way of approximating a batch of similar tensors in hierarchical Tucker format. The second algorithm, \textit{Hierarchical Tucker - Rapid Incremental Subspace Expansion} (\ouralgorithm), updates the batch hierarchical Tucker representation of an accumulated tensor as new batches of tensors become available.
        The \ouralgorithm~algorithm is suitable for the online setting and never requires full storage or reconstruction of all data while providing a solution to the incremental Tucker decomposition problem. We provide theoretical guarantees for both algorithms and demonstrate their effectiveness on physical and cyber-physical data.
        The proposed \texttt{BHT-l2r} algorithm and the batch hierarchical Tucker format offers up to $6.2\times$ compression and $3.7\times$ reduction in time over the hierarchical Tucker format. The proposed \ouralgorithm~algorithm also offers up to $3.1\times$ compression and $3.2\times$ reduction in time over a state of the art incremental tensor train decomposition algorithm.
    \end{abstract}
    \begin{keywords}
        Tensor decompositions, incremental algorithms, streaming data, scientific machine learning, data compression, latent representation, low-rank factorization
    \end{keywords}

    \section{Introduction}\label{sec:introduction}

    Low-rank tensor decomposition formats~\citep{oseledets2011tensor,tucker1966some,grasedyck2010hierarchical,de2000multilinear} provide an efficient way of representing multidimensional data arising from a large number of applications that include videos~\citep{tian2023tensor,chen2024lowrank,panagakis2021tensor}, MRI scan images
    ~\citep{zhang2019tensor,lehmann2022multi,mai2023}, deep learning~\citep{kossaifi2023multi,luo2024trawl,yang2024loretta}, and solutions of scientific simulations~\citep{marks2024hall,pfaff2020learning}.
    There does not exist a single, univerally optimal, low-rank tensor format. Instead, some well-known low-rank tensor formats are the CP format~\citep{harshman1970foundations,carroll1970analysis}, the Tucker format~\citep{tucker1966some}, the Tensor Train (TT) format~\citep{oseledets2011tensor}, and the hierarchical Tucker (HT) format~\citep{oseledets2009breaking,grasedyck2010hierarchical}.
    These formats provide different forms of compression that exploit slightly different types of low-rank structure in data.
    \begin{figure}[htbp]
        \centering
        \includegraphics[width=0.8\textwidth]{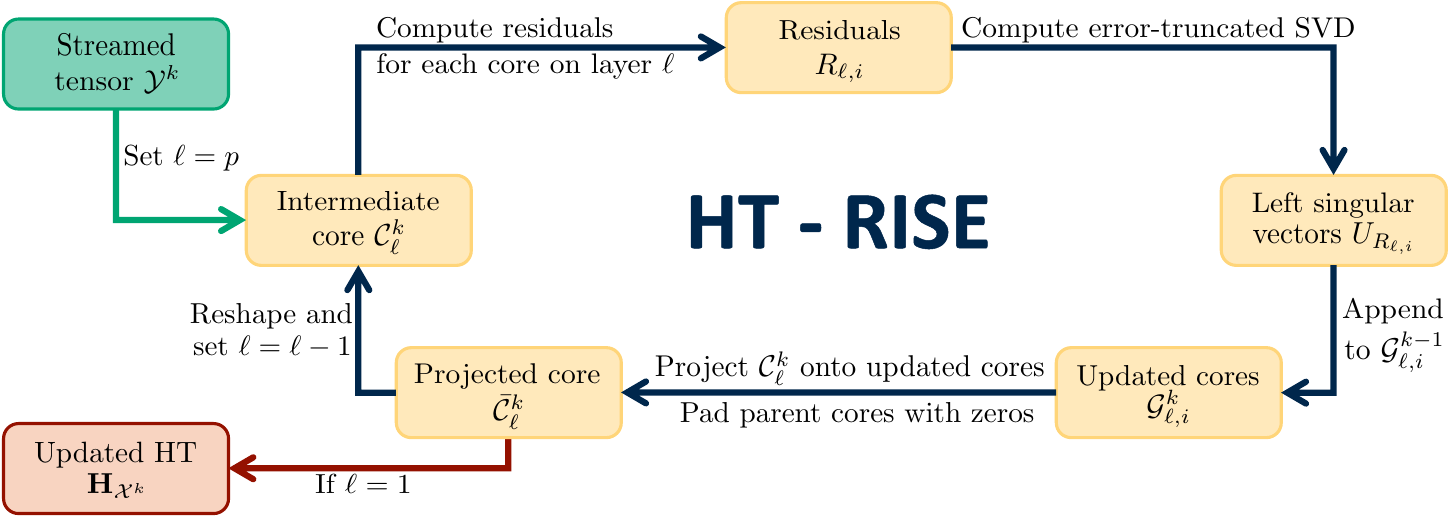}
        \caption{\small\emph{Flow of the proposed \ouralgorithm~algorithm. The algorithm updates the hierarchical Tucker representation of an accumulated tensor in batch hierarchical Tucker form as new batches of tensors become available. Green represents the input data and red represents the output data.}}
        \label{fig:algorithm_loop}
    \end{figure}

    In this paper, we consider the problem decomposing data in batches. This problem is motivated by the fact that data in many applications is not available at once, and/or the size of the data makes it infeasible or impossible to compute an approximation using a one-shot tensor decomposition algorithm due to computational issues (e.g., memory limits). For these scenarios, incremental algorithms can be effective by incrementally compress new data as it becomes available. They are particularly useful in the online setting where the data comes from a streaming process. While the literature has numerous incremental algorithms to compute the CP decomposition~\citep{zeng2021incremental,smith2018streaming}, the Tucker decomposition~\citep{de2023efficient,malik2018low}, and the TT decomposition~\citep{aksoy2024incremental,liu2018incremental,kressner2023streaming}, we are not aware of incremental approaches for the HT format.

    The main contribution of this paper is providing the first algorithm to incrementally construct a HT decomposition from streaming data.

    Furthermore, many applications that leverage compressed representations use it as a latent space for performing downstream tasks. In this setting, it is important to have both an encoding and decoding procedure between the full data and the latent space. While the HT format has some advantages over other formats, it does not immediately provide an efficient latent vector that represents an encoding of a batch of data. Straight-forward applications of existing HT decomposition algorithms~\citep{kressner2014algorithm} treat the batch dimension same as an additional dimension identical to existing ones --- so that when a batch of $d$-dimensional tensors $\mathcal{Y}\in\reals^{n_{1}\times\cdots\times n_{d}\times N}$ is compressed, the batch dimension is represented as an HT leaf node with orthonormal columns.

    We improve upon this approach by modifying the HT format to more efficiently represent the batch latent space by the HT root node.
    
    Formally, we describe the problem as follows. We consider a $d$-way tensor to be a multidimensional array $\mathcal{Y}\in\reals^{n_{1}\times\cdots\times n_{d}}$ with $d$ dimensions. Parallel to the definition in~\cite{aksoy2024incremental}, a \textit{tensor stream} (or alternatively, \textit{stream of tensors}), is  a sequence of $d+1$-way tensors $\mathcal{Y}^1,\mathcal{Y}^2$,\dots, where each element in the sequence $\mathcal{Y}^{k}\in \reals^{n_1\times\cdots\times n_d\times \newdim^{k}}$ is a $\newdim^{k}$ batch of $d$-dimensional tensors. A finite stream of tensors is called an \textit{accumulation} and can be viewed as a $(d+1)$-way tensor $\mathcal{X}^k\in\reals^{n_1\times\cdots\times n_d\times \cumuldim^{k}}$ by concatenating the tensors in the stream along the last dimension, with $\cumuldim^{k}=\sum_{i=1}^{k}\newdim^{i}$.

    Taken together, the above setting motivates the following problem as the first one that we solve.

        \begin{problem}\label{prob:batch_ht}
            (Approximating a batch of similar tensors in hierarchical Tucker format)
          Construct a scheme to compute an approximation $\hat{\mathcal{Y}}$ for a batch of $d$-dimensional tensors $\mathcal{Y}\in\reals^{n_{1}\times\cdots\times n_{d}\times N}$ in a (batch-modified) hierarchical Tucker format. Furthermore, the computed representation should approximate the original batch of tensors with error $\|\mathcal{Y}-\hat{\mathcal{Y}}\|_{F}\leq \varepsilon_{des}\|\mathcal{Y}\|_{F}$.

          The scheme should also create individual latent representations for all tensors in a batch, and provide a means to decode a latent representation into the full tensor.
        \end{problem}
     Next, we seek to perform the same task, but incrementally:
        \begin{problem}\label{prob:incremental_ht}
            (Updating an existing hierarchical Tucker representation as new batches of tensors become available)
            Construct a scheme to update the approximation $\hat{\mathcal{X}}^{k}$ of the accumulation tensor $\mathcal{X}^{k}$ after every increment $k$ in hierarchical Tucker format. The constructed scheme should maintain the guaranteed bounds on the error $\|\mathcal{X}^{k}-\hat{\mathcal{X}}^{k}\|_{F}$ for all $k$. Furthermore, this approximate accumulation should represent all previous tensor increments $\mathcal{Y}^{\ell}$ with error $\|\mathcal{Y}^{\ell}-\hat{\mathcal{Y}}^{\ell}\|_{F}\leq \varepsilon_{des}\|\mathcal{Y}^{\ell}\|_{F}$ for any $\ell\leq k$, where $\hat{\mathcal{Y}}^{\ell}$ can be extracted from $\hat{\mathcal{X}}^{k}$.
        \end{problem}
        Our contributions are algorithms to solve these two problems:
        \begin{enumerate}
        \item The \textit{Batch Hierarchical Tucker - leaf to root}, Algorithm~\ref{alg:batch_htucker},  computes an approximate  hierarchical Tucker representation of a batch of tensors $\mathcal{Y}$.
        \item The \textit{Hierarchical Tucker - Rapid Incremental Subspace Expansion}, Algorithm~\ref{alg:HIT}, updates an existing batch HT with new data.
        \end{enumerate}
        Moreover, \ouralgorithm~is suitable for the online setting and never requires full storage or reconstruction of all data while providing a solution to \Cref{prob:incremental_ht}.  These algorithms are both rank adaptive and have provable error bounds. A high level overview of the \ouralgorithm~algorithm is shown in \Cref{fig:algorithm_loop}. 

        These contributions are achieved by limiting the number of new basis vectors, representing directions orthogonal to the span of the existing ones, appended to the hierarchical Tucker cores of an existing accumulation tensor after each data increment. Moreover, our theoretical results are empirically justified on both scientific applications dealing with compression of numerical solutions of partial differential equations (PDEs) and on applications arising from image-based data such as video streams and multispectral satellite images.

        Our experiments show up to $1.5\times$ compression and $1.7\times$ reduction in time by the new Batch-HT format compared to the HT format on scientific data and up to $6.2\times$ compression and $3.7\times$ reduction in time for an image dataset. Furthermore, the proposed \ouralgorithm~algorithm provides up to $3.1\times$ compression and $4.5\times$ reduction in time at physical data, and $2\times$ compression and $5.3\times$ reduction in time at image-based data over a state of the art incremental TT decomposition algorithm. In addition to its computational efficiency, the proposed \ouralgorithm~algorithm discovers a latent representation that is more expressive and generalizable than the methods compared. This is evidenced by achieving the target approximation error on the test set with up to 15$\times$ less data than methods compared. Even for cases where the state-of-the-art TT decomposition algorithm fails to achieve the target error on the test set, \ouralgorithm~still achieves the target error.
        
        The  rest  of  this  paper  is  structured  as  follows. In \cref{sec:background},  we  present  the foundational  concepts  behind  both the  Tucker format and the HT format. In \cref{sec:methodology}, we present the \texttt{BHT-l2r} and \ouralgorithm~algorithms and prove their correctness. In \cref{sec:experiments}, we provide numerical experiments using our proposed approaches on physical and cyber-physical data. Finally, in \cref{sec:conclusion}, we conclude the paper and discuss possible future extensions.

    \section{Background}\label{sec:background}
        This section presents the relevant background on tensors, Tucker format, and hierarchical Tucker format.

        \subsection{Tensors}\label{sec:tensors}
            The mode-$i$ \textit{matricization} (or \textit{unfolding}) of a $d$-way tensor $\mathcal{A}\in\reals^{n_1\times\cdots\times n_d}$ reshapes the tensor into a matrix $A_{(i)}$ with size $n_{i}\times n_{1}\dots n_{i-1} n_{i+1}\dots n_d$.  In this unfolding, the $i$-th dimension is swapped to first index and fibers corresponding to the $i$-th mode are mapped into the rows of the matrix. The mode-$i$ unfolding operation will be represented as $\texttt{unfold}(\mathcal{A},i)$ in the rest of the text.

            The \textit{contraction} between two tensors $\mathcal{A}\in\reals^{n_{1}\times\cdots\times n_{d}}$ and $\mathcal{B}\in\reals^{n_d\times n_{d+1}\times\cdots\times n_{D}}$ along the $d$-th dimension of $\mathcal{A}$ and the first dimension of $\mathcal{B}$ is a binary operation represented as
            \begin{equation*}
            \resizebox{\textwidth}{!}{$
                \mathcal{C} = \mathcal{A}\ \tenprod{d}{1}\ \mathcal{B}, \quad \textrm{ where } \quad
                \mathcal{C}\left(i_{1},i_{2},\dots,i_{d-1},i_{d+1},\dots,i_{d}\right)=\sum_{j=1}^{n_d}\mathcal{A}\left(i_{1},\dots,i_{d-1},j\right)\mathcal{B}\left(j,i_{d+1},\dots,i_{D}\right)
                $},
            \end{equation*}
            and the output $\mathcal{C}\in\reals^{n_1\times\cdots\times n_{d-1}\times n_{d+1}\times\cdots\times n_{D}}$ becomes a $(D-2)$-way tensor. The subscripts on either side of the $\times$ sign indicate the contraction axes of the tensors on their respective sides.

            Let $\mathcal{A}\in\reals^{n_{1}\times\cdots\times n_{d}}$ be a $d$-dimensional tensor, and $B_{i}\in\reals^{m_{i}\times n_{i}}$ be matrices with $i=1,\dots,d$.
            Then \textit{multi-index contraction} between $\mathcal{A}$ and $B_{i}$ is defined as\footnote{Even though we have provided examples strictly contracting the second dimension of the matrices with the tensor, the multi-index contraction can happen between any appropriate axis of the matrix and the tensor in question.} 
            \begin{equation}\label{eq:multi_index_contraction}
                \mathcal{C}=\mathcal{A}\times\llbracket B_{1},\dots,B_{d}\rrbracket, \quad \textrm{where} \quad \mathcal{C}=\left(\cdots\left(\left(\left(\mathcal{A}~\tenprod{1}{2}~B_{1}\right)~\tenprod{2}{2}~B_{2}\right)~\tenprod{3}{2}~\cdots \right)~\tenprod{d}{2}~B_{d}\right),
            \end{equation}
            with $\mathcal{C}\in\reals^{m_{1}\times\cdots\times m_{d}}$ the resulting $d$-dimensional tensor. The idea in \eqref{eq:multi_index_contraction} can be generalized to a contraction with $d$ (or fewer) tensors $\mathcal{B}_i$ along their last dimensions in a similar manner. In case of contraction with $d$ three-way tensors $\mathcal{B}_i \in \reals^{m_{{1_i}} \times m_{2_i} \times n_{i}}$, \eqref{eq:multi_index_contraction} is denoted as
            \begin{equation} \label{eq:multi_index_contraction_for_three_way_tensors}
            \mathcal{C} =  \mathcal{A}\times\llbracket \mathcal{B}_{1},\dots, \mathcal{B}_{d}\rrbracket, \quad \textrm{where} \quad \mathcal{C}=\left(\cdots\left(\left(\left(\mathcal{A}~\tenprod{1}{3}~\mathcal{B}_{1}\right)~\tenprod{2}{3}~\mathcal{B}_{2}\right)~\tenprod{3}{3}~\cdots \right)~\tenprod{d}{3}~\mathcal{B}_{d}\right),
            \end{equation}
            with $\mathcal{C}\in\reals^{m_{1_1}\times m_{1_2} \times m_{2_1} \times m_{2_2} \times \cdots \times m_{d_1} \times m_{d_2}}$ being the resulting $2d$ dimensional tensor. Finally, it may be useful to only contract along a subset of dimensions. To this end, we can define a slightly modified version of \eqref{eq:multi_index_contraction_for_three_way_tensors} as
            \begin{equation} \label{eq:multi_index_contraction_partial}
            \resizebox{\textwidth}{!}{$
            \mathcal{C} =  \mathcal{A} \underset{\mathcal{I}}{\times} \llbracket \mathcal{B}_{i_1},\dots, \mathcal{B}_{i_m}\rrbracket, \quad \textrm{where} \quad \mathcal{C}=\left(\cdots\left(\left(\left(\mathcal{A}~\tenprod{i_{1}}{3}~\mathcal{B}_{i_{1}}\right)~\tenprod{i_{2}}{3}~\mathcal{B}_{i_{2}}\right)~\tenprod{i_{3}}{3}~\cdots \right)~\tenprod{i_{m}}{3}~\mathcal{B}_{i_{m}}\right)
            $}
            \end{equation}
            to represent a contraction along $m$ directions that are specified by the index set $\mathcal{I} =\{i_1,\ldots,i_{m}\}$, with $i_{j} \in {1,\ldots,d}$ denoting the dimension of $\mathcal{C}$ that is involved with the contraction.
            
             \textit{Concatenation} of two tensors along the $k$-th dimension is a binary operation. Concatenating two $d$-dimensional tensors $\mathcal{A}\in\reals^{n_{1}\times\cdots\times n_{k}\times \cdots\times n_{d}}$ and $\mathcal{B}\in\reals^{n_{1}\times\cdots\times n_{k-1} \times m_{k}\times n_{k+1}\times  \cdots\times n_{d}}$ along their $k$-th dimension is denoted by
            \begin{equation}\label{eq:concatenation}
                \mathcal{C}=\mathcal{A}\pad^{k}\mathcal{B},\quad\text{where} \quad\mathcal{C}\left( i_{1},\dots,i_{k},\dots,i_{d} \right)= \begin{cases}
                    \mathcal{A}\left( i_{1},\dots,i_{k},\dots,i_{k} \right)& \text{if } i_{k}\leq n_{k}\\
                \mathcal{B}\left( i_{1},\dots,(i_{k}-n_{k}),\dots,i_{k} \right) & \text{otherwise}
            \end{cases}.
            \end{equation}
    
            Concatenation of a tensor with $\mathbf{0}_{m\times n\times p}$, a tensor of zeros with size $m\times n \times p$ will be referred to as {\it padding}.

        \subsection{Tucker format}\label{sec:tucker_format}

            When tensors have additional structure, it may be possible to effectively represent them in a more compact format. In this section we describe the Tucker format, which will form the basis of the hierarchical Tucker format we consider later. L.R. Tucker proposed a way to decompose a third order tensor into three factor matrices and a core tensor, coining the name \textit{Tucker format} for this family of tensor representation format~\citep  {tucker1963implications,tucker1964extension,tucker1966some}. In Tucker format, a $d$ dimensional tensor $\mathcal{Y}\in\reals^{n_{1}\times\cdots\times n_{d}}$ is represented as a contraction between a $d$-dimensional core tensor $\mathcal{C}\in\reals^{r_{1}\times\cdots\times r_{d}}$ and $d$ factor matrices $U_{i}\in\reals^{n_{i}\times r_{i}}$,
            \begin{equation}\label{eq:tuckerformat}
                \mathcal{Y}=\mathcal{C}\times\llbracket U_{1},\dots,U_{d} \rrbracket.
            \end{equation}
            The Tucker representation reduces the storage from $\mathcal{O}\left(n^{d} \right)$ to $\mathcal{O} \left(dnr+r^{d}\right)$, where $r_{i}$ are potentially much smaller than $n_{i}$. One of the well established ways to compute Tucker representation of a tensor is through the \texttt{HOSVD} (\texttt{H}igher \texttt{O}rder \texttt{S}ingular \texttt{V}alue \texttt{D}ecomposition) algorithm~\citep{de2000multilinear,tucker1966some}. In literature, there are numerous efficient methods of computing the Tucker representation of a tensor including direct~\citep{vannieuwenhoven2012new,kressner2017recompression}, randomized~\citep{che2019randomized,kressner2017recompression,tsourakakis2010mach,zhang2018randomized,zhou2014decomposition,minster2022parallel,sun2020low}, and iterative~\citep{de2000best,kroonenberg1980principal,wen2015robust,chachlakis2020l1,kressner2017recompression,tsourakakis2010mach,elden2009newton} algorithms.

            \subsection{Hierarchical Tucker Format} \label{sec:hierarchical_tucker}
            The Tucker format scales exponentially with the number of dimensions $d$, which prohibits its usage for high-dimensional tensors. One way to alleviate this curse of dimensionality is to exploit structure of the Tucker core. Such structure can be exploited via a recursive Tucker decomposition. In \citet{oseledets2009breaking}, the resulting format is named as \textit{tree Tucker}, and in \citet{grasedyck2010hierarchical} it is mentioned as \textit{hierarchical Tucker} (HT). The HT format introduces a hierarchy among dimensions and splits them accordingly. In the simplest, and most commonly used, binary splitting case, the Tucker core $\mathcal{C}$ in Equation~\eqref{eq:tuckerformat} is reshaped into a tensor with $d/2$ dimensions and a decomposition of this core is created, recursively. The HT format has storage complexity $\bigo(dnR+dR^{3})$, cubic in the maximum HT rank $R$.

            The HT representation of a tensor is a couple $\hierarchical = (\tree, \bm{\core})$, where $\tree$ is a {\it dimension tree} and $\bm{\core}$ is a set of {\it HT cores}. The dimension tree is a connected graph of $2d-1$ nodes, $\tree = \{ \node_{\ell,i_{\ell}} \}$, that specifies a contraction ordering of the $2d-1$ HT cores, $\bm{\core} = \{\core_{\ell,i_{\ell}}\}$. The tree has depth $p$ and $|\tree_{\ell}|$ nodes in layer $\ell$. A layer is a set of nodes equidistant from the root. The indices of each node in the tree $\node_{\ell,i_{\ell}} \in \tree$ run from $\ell = 0,\ldots,p$ and $i_{\ell} = 1,\ldots, |\tree_{\ell}|,$ and each node describes how the corresponding core $\core_{\ell, i_{\ell}}$ contracts with its neighbors. Contracting all cores according to the dimension tree yields a reconstruction of the represented tensor. The root, interior, and leaves of the tree structure correspond to so-called \textit{root}, \textit{transfer}, and \textit{leaf} cores.  \Cref{fig:dimension_trees} illustrates various configurations of dimension trees for a five-dimensional tensor.

            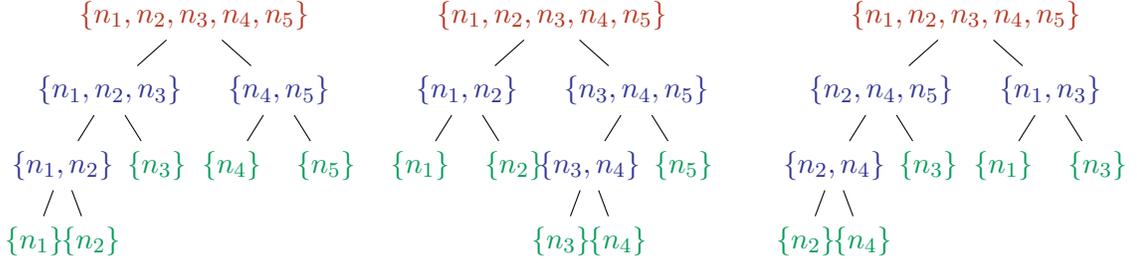
\begin{figure}[h!]
                \centering
                \begin{subfigure}[h]{0.3\textwidth}
                    \begin{tikzpicture}[level distance=1cm, 
    level 1/.style={sibling distance=2.25cm},
    level 2/.style={sibling distance=1.25cm},
    level 3/.style={sibling distance=0.75cm}]
  
    \node {\textcolor{BrickRed}{\{$n_{1},n_{2},n_{3},n_{4},n_{5}$\}}}
        child {node {\textcolor{Blue}{\{$n_{1},n_{2},n_{3}$\}}}
            child {node {\textcolor{Blue}{\{$n_{1},n_{2}$\}}}
                child {node {\textcolor{ForestGreen}{\{$n_{1}$\}}}}
                child {node {\textcolor{ForestGreen}{\{$n_{2}$\}}}}
                }
            child {node {\textcolor{ForestGreen}{\{$n_{3}$\}}}}
        }
        child {node {\textcolor{Blue}{\{$n_{4},n_{5}$\}}}
            child {node {\textcolor{ForestGreen}{\{$n_{4}$\}}}}
            child {node {\textcolor{ForestGreen}{\{$n_{5}$\}}}}
        };
  
\end{tikzpicture}
  
                \end{subfigure}
                \hspace{3mm}
                \begin{subfigure}[h]{0.3\textwidth}
                    \begin{tikzpicture}[level distance=1cm, 
    level 1/.style={sibling distance=2.25cm},
    level 2/.style={sibling distance=1.25cm},
    level 3/.style={sibling distance=0.75cm}]
  
    \node {\textcolor{BrickRed}{\{$n_{1},n_{2},n_{3},n_{4},n_{5}$\}}}
        child {node {\textcolor{Blue}{\{$n_{1},n_{2}$\}}}
            child {node {\textcolor{ForestGreen}{\{$n_{1}$\}}}
            }
            child {node {\textcolor{ForestGreen}{\{$n_{2}$\}}}}
            }
            child {node {\textcolor{Blue}{\{$n_{3},n_{4},n_{5}$\}}}
            child {node {\textcolor{Blue}{\{$n_{3},n_{4}$\}}}
                child {node {\textcolor{ForestGreen}{\{$n_{3}$\}}}}
                child {node {\textcolor{ForestGreen}{\{$n_{4}$\}}}}
            }
            child {node {\textcolor{ForestGreen}{\{$n_{5}$\}}}}
        };
  
\end{tikzpicture}
  
                \end{subfigure}
                \hspace{3mm}
                \begin{subfigure}[h]{0.3\textwidth}
                    \begin{tikzpicture}[level distance=1cm, 
    level 1/.style={sibling distance=2.25cm},
    level 2/.style={sibling distance=1.25cm},
    level 3/.style={sibling distance=0.75cm}]
  
    \node {\textcolor{BrickRed}{\{$n_{1},n_{2},n_{3},n_{4},n_{5}$\}}}
        child {node {\textcolor{Blue}{\{$n_{2},n_{4},n_{5}$\}}}
            child {node {\textcolor{Blue}{\{$n_{2},n_{4}$\}}}
                child {node {\textcolor{ForestGreen}{\{$n_{2}$\}}}}
                child {node {\textcolor{ForestGreen}{\{$n_{4}$\}}}}
                }
            child {node {\textcolor{ForestGreen}{\{$n_{3}$\}}}}
        }
        child {node {\textcolor{Blue}{\{$n_{1},n_{3}$\}}}
            child {node {\textcolor{ForestGreen}{\{$n_{1}$\}}}}
            child {node {\textcolor{ForestGreen}{\{$n_{3}$\}}}}
        };
  
\end{tikzpicture}
  
                \end{subfigure}
                \caption{\small\emph{Three possible configurations of the dimensions for a five dimensional tensor with shape $n_{1}\times n_{2}\times n_{3}\times n_{4}\times n_{5}$. Each dimension tree describes a different interaction between dimensions and therefore yields different compression performance. For a more detailed study on the effect of the axis reordering, please refer to \Cref{app:axis_ordering}. Root, transfer, and leaf nodes are colored in red, blue, and green, respectively.}}
                \label{fig:dimension_trees}
            \end{figure}
            For balanced binary trees, the depth is determined by $p=\lceil\log_{2}\left( d \right)\rceil$, with $\lceil\cdot\rceil$ defined as $\min\left\{ i\in \mathbb{Z} | i \geq \cdot \right\}$. In this study, we focus exclusively on balanced dimension trees, which means that the leaf nodes appear on both the last level ($p$) and the penultimate level ($p-1$). The order in which dimensions are arranged significantly impacts the interaction among them, and consequently influences the result of the decomposition, as empirically demonstrated in \Cref{app:axis_ordering}. \Cref{fig:dimension_trees} also shows that there are always a total of $d$ leaf cores, however these leaf cores are not always in the same layer. It will be useful to keep track of the dimension to which a leaf core refers. To this end, we will denote $d_{\ell, i_{\ell}} \in \{1,\ldots,d\}$ to be the dimension corresponding to leaf core $\core_{\ell,i_{\ell}}$. The subscript is only valid if the core is a leaf node.

            Formally,  a node is a couple $\node_{\ell, i_{\ell}} = (\successors_{\ell, i_{\ell}}, \parents_{\ell,i_{\ell}})$  that specifies the indices of the successors and parents with which $\core_{\ell,i_{\ell}}$ contracts. For each type of core we have:
            \begin{itemize}[wide, leftmargin=*]
            \item[Leaf core]: $\core_{\ell, i_{\ell}} \in \reals^{n_{i} \times \rank_{\ell, i_{\ell}}};$ $\successors_{\ell,i_{\ell}}=\emptyset$, $\parents_{\ell,i_{\ell}}= (\ell-1,\lceil i_{\ell}/2 \rceil)$
            \item[Transfer core]: $\core_{\ell, i_{\ell}} \in \reals^{\rank_{\ell+1,\alpha_{\ell+1}} \times  \rank_{\ell+1, \alpha_{\ell+1}+1} \times \rank_{\ell, i_{\ell}}}$,
            $\successors_{\ell,i_{\ell}} = \{(\ell+1,{\alpha}_{\ell+1}),(\ell+1,(\alpha_{\ell+1}+1))\}$, $\parents_{\ell,i_{\ell}} = (\ell-1,\lceil {i_{\ell}}/{2}\rceil)$
            \item[Root core]: $\core_{0,1}\in\reals^{\rank_{1,1}\times \rank_{1,2}}$, 
            $\successors_{0,1} = \{(1,1),(1,2)\}$  $\parents_{0,1}=\emptyset$,
            \end{itemize}
            where $\alpha_{\ell+1}= 2(i_{\ell}-\beta_{\ell,i_{\ell}})-1$ indexes the appropriate successor node, and $\beta_{\ell,i_{\ell}}$ is the number of leaf nodes in layer $\ell$ up to $i_{\ell}$ node of that layer.
            For a node $\node_{\ell-1,t}$, the order of successors $\successors_{\ell-1,t}=\{\node_{\ell,i},\node_{\ell,j}\}$ is determined by comparing their position within the $\ell$-th layer, i.e., by ranking $i$ and $j$.
            
            The collection of ranks, called the HT ranks, $\ranks = [\rank_{1,1}, \rank_{1,2}, \rank_{2,1},\ldots, \rank_{p, |\tree_p|}]$ correspond to the sizes of the edges along which the leaves, transfer, and root core nodes contract. Contraction with a parent occurs along the last dimension of a node, while contraction with successors occurs with the first two.

            To obtain the full tensor we can therefore perform contractions of each core with parents and successors according to the map provided by $\tree$. A particularly illuminating contraction ordering is root-to-leaves. Root-to-leaves contraction begins contraction with the root node and moves down the tree, layer by layer. Overall, this contraction represents the reconstruction of a tensor $\mathcal{Y}$ from its HT format as
            \begin{equation}\label{eq:htucker_reconstruction}
            \mathcal{Y} = \left(\cdots \left(\left(\mathcal{G}_{0,1} \times \llbracket \mathcal{G}_{1,1}, \mathcal{G}_{1,1} \rrbracket \right) \times \llbracket \mathcal{G}_{2,1},\mathcal{G}_{2,2}, \mathcal{G}_{2,3},\mathcal{G}_{2,4} \rrbracket \right) \times \cdots  \right)\times \llbracket \mathcal{G}_{p,1},\ldots,\mathcal{G}_{p, |\tree_p|} \rrbracket.
            \end{equation}
            This representation shows why the HT format is interpreted as a {\it hierarchical decomposition of the Tucker core tensor}. Indeed the last contraction is over the leaves, which can be interpreted as the Tucker leaves.
            
            \subsubsection{Leaves-to-root error truncated hierarchical Tucker decomposition}\label{sec:error_truncated_htucker}

                In this section we describe an algorithm to represent a given tensor $\mathcal{Y}\in\reals^{n_{1}\times\cdots\times n_{d}}$ in low-rank format. In practice, $\mathcal{Y}$ is rarely exactly low rank, and hence decomposition algorithms generate a representation with a prescribed accuracy.
                Two strategies to compute a hierarchical Tucker representation include: 1) leaves-to-root decomposition~\cite[Alg.~2]{grasedyck2010hierarchical}, and 2) root-to-leaves decomposition~\cite[Alg.~5]{kressner2014algorithm}. Our proposed incremental method updates the hierarchical Tucker representation from leaves to root, and we limit discussion to this case.
                
                Leaves-to-root decomposition essentially applies the HOSVD (or a variant) to each layer of the tree $\tree$, beginning at the $p$-th layer. First, a truncated singular value decomposition is performed for $|\tree_p|$ unfoldings --- each unfolding corresponding to the dimensions of the $i$-th node in the layer. For each, an error truncated \texttt{SVD} with a Frobenius norm bound $\varepsilon_{nw}$ on the residual $E_{p,i}$ is computed,
                \begin{equation}\label{eq:ht_leaf_svd}
                Y_{(d_{p,i})} = U_{{p},{i}} \Sigma_{p,{i}} V_{p,i}^{T}+E_{p,{i}} \quad \| E_{p,{i}} \|_{F} \leq \varepsilon_{nw}, \quad i=1,\ldots, |\tree_{p}| 
                \end{equation}
                where $U_{p,{i}}\in\reals^{n_{i}\times r_{p,i}}$ is the matrix of left singular vectors with the truncation rank $r_{p,i}$, $\Sigma_{p,i}\in\reals^{r_{p,i}\times r_{p,i}}$ is the matrix of singular values, and $V_{p,{i}}^{T}\in\reals^{r_{p,i}\times m}$ is the matrix of right singular vectors. All nodes at the $p$-th layer correspond to leaf cores, yielding  $\core_{p,i}=U_{p,i}$.
                The terms $\Sigma_{p,i}$ and $V_{p,i}$ are discarded in the leaves-to-root decomposition\footnote{If a variant of the \texttt{ST-HOSVD} algorithm~\citep{vannieuwenhoven2012new} is used to perform the computations, then $\Sigma$ and $V$ might be used in the subsequent steps within the layer.}. Once \eqref{eq:ht_leaf_svd} is repeated for all leaves on layer $p$, the orthogonal $U_{p,{i}}$ matrices are contracted with $\mathcal{Y}$ along the dimensions refered to by the leaves
                \begin{equation}\label{eq:ht_leaf_contraction}
                    \bar{\mathcal{C}}_{p}=\mathcal{Y} \underset{\mathcal{I}}{\times} \llbracket U_{p,1}^{T},\dots,U_{p,|\tree_{p}|}^{T} \rrbracket, \textrm{ where } \quad \mathcal{I} = \{d_{p,i} ; i =1, \ldots, |\tree_{p}| \}
                \end{equation}
                to obtain an intermediate core $\bar{\mathcal{C}}_{p}$ that will be decomposed recursively in subsequent steps. Prior to such a decomposition, this intermediate core must be reshaped. In a perfect binary tree, there are $d$ leaves (one for each dimension), and reshaping transforms $\bar{\mathcal{C}}_p$ into a tensor $\mathcal{C}_{p-1}$ of dimensions $\rank_{p,1}\rank_{p,2} \times \rank_{p,3} \rank_{p,4} \times \cdots \times \rank_{p,d-1}\rank_{p,d}$. More generally, the dimension tree defines the groupings by defining the successors of each node. To this end, we can define the reshaped dimensions using an \textit{index set} according to
                \begin{equation}\label{eq:ht_index_set_construction} 
                    \resizebox{\textwidth}{!}{$
                  \indexset_{\tree_{p-1}}=
                  \bigcup_{j=1}^{|\tree_{p-1}|}
                    \begin{cases}
                        n_{d_{p-1,j}} & \text{if $\node_{p-1,j}$ is a leaf}\\
                        \displaystyle{\prod_{(p,m) \in \successors_{p-1,j}}} \rank_{p,m} & \text{else.}
                    \end{cases},
                    \quad \textrm{ so that} \quad
                    \mathcal{C}_{p-1}=\reshape\left(\bar{\mathcal{C}}_{p},\ \indexset_{\tree_{p-1}}\right).
                    $}
                \end{equation}

                Once we obtain $\mathcal{C}_{p-1}$, the error truncated \texttt{SVD} analogous to \eqref{eq:ht_leaf_svd} and reshaping steps are repeatedly used to decompose each level to obtain $\mathcal{C}_{\ell}$ for $\ell=p-2,\ldots,1$. For these subsequent levels, the left-singular vectors $U_{\ell,i}$ of the truncated SVD of the unfoldings $C_{\ell,(i)}$ become the transfer cores. Transfer cores in the HT format are three dimensional and thus the left singular vectors are reshaped according to  
                \begin{equation}\label{eq:ht_tucker_core_reshape} 
                    \core_{\ell,i}=\reshape\left( U_{\ell,i}, \indexset_{\node_{\ell,i}} \right), \quad \textrm{ where }                     \indexset_{\node_{\ell,i}}=\left\{\bigcup_{(\ell+1,j) \in \successors_{\ell,i}}\rank_{\ell+1,j}\right\}\cup {\rank_{\ell, i}}. \quad i = 1,\ldots, |\tree_{\ell}|,
                \end{equation}
                whereas any leaf cores are reshaped analogously to the approach at the $p$-th layer.

                The decomposition is completed after a final error truncated \texttt{SVD} of $\mathcal{C}_{1}\in\reals^{\rank_{1,1}\times \rank_{1,2}}$ 
                \begin{equation}\label{eq:ht_last_svd}
                    \mathcal{C}_{1} = U_{1}\Sigma_{1}V_{1}^{T}+E_{1},                    
                \end{equation}
                with $U_{1}\in\reals^{\rank_{1,1}\times \rank_{0}}$, $\Sigma_{1}\in\reals^{\rank_{0}\times \rank_{0}}$, $V_{1}^{T}\in\reals^{\rank_{0}\times \rank_{1,2}}$ and $E_{1}\in\reals^{\rank_{1,1}\times \rank_{1,2}}$. Unlike the previous levels, all singular vectors and values are kept: $U_{1}$ is reshaped into $\core_{1,1}$, $V_{1}^{T}$ is reshaped into $\core_{1,2}$, and the singular values $\Sigma_{1}$ become the root core $\core_{0,1}.$ This procedure comes with the following guarantee
                \begin{theorem}[Adapted from~\cite{kressner2014algorithm} Lemma B.2]\label{thm:ht_nodewise_error}
                    For a \\$d$-dimensional tensor $\mathcal{Y}\in\reals^{n_{1}\times\cdots\times n_{d}}$, the best HT approximation $\tilde{\mathcal{Y}}$ with an absolute approximation error $\|\mathcal{Y}-\tilde{\mathcal{Y}}\|_{F} \leq \varepsilon_{abs}$ can be obtained by prescribing a node-wise truncation error $\varepsilon_{nw} = \frac{\varepsilon_{abs}}{\sqrt{2d-3}}$ such that 
                \begin{equation}\label{eq:ht_truncation_error}
                    \|E_{i}\|_{F} \leq \varepsilon_{nw}
                \end{equation} 
                for all truncated \texttt{SVD} computations.
                \end{theorem}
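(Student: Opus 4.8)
The plan is to reduce the claim to two facts: (i) a ``root-sum-of-squares'' inequality $\|\mathcal{Y}-\tilde{\mathcal{Y}}\|_F^2 \le \sum_t \|E_t\|_F^2$, where the sum ranges over the nodes at which a truncated \texttt{SVD} is performed, and (ii) a combinatorial count showing there are exactly $2d-3$ such nodes. Granting both, the prescription $\|E_t\|_F \le \varepsilon_{nw} = \varepsilon_{abs}/\sqrt{2d-3}$ gives $\|\mathcal{Y}-\tilde{\mathcal{Y}}\|_F^2 \le (2d-3)\,\varepsilon_{nw}^2 = \varepsilon_{abs}^2$, which is exactly the global bound asserted alongside \eqref{eq:ht_truncation_error}.

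First I would recast each error-truncated \texttt{SVD} as an orthogonal projection. Keeping the leading $r_t$ left singular vectors $U_t$ of a matricization is equivalent to applying the orthogonal projector $\Pi_t = U_t U_t^{T}$ to that matricization, with residual $E_t = (I-\Pi_t)(\text{matricization})$; thus $\|E_t\|_F$ is precisely the tail of discarded singular values recorded in \eqref{eq:ht_leaf_svd} and \eqref{eq:ht_last_svd}. Carrying out the leaves-to-root scheme then realizes $\tilde{\mathcal{Y}}$ as the composition of all projectors $\Pi_t$ applied to $\mathcal{Y}$ in tree order.

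Next I would telescope the global error. Ordering the projectors $\Pi_1,\dots,\Pi_{2d-3}$ from leaves to root and setting $Q_k = \Pi_k\cdots\Pi_1$ with $Q_0 = I$, the telescoping identity $\mathcal{Y}-\tilde{\mathcal{Y}} = \sum_{k=1}^{2d-3}(I-\Pi_k)\,Q_{k-1}\,\mathcal{Y}$ holds, so the $k$-th summand is the fresh error introduced at node $k$ after the earlier projections. I would then show these summands are mutually Frobenius-orthogonal: for $k<k'$ the later summand lies in the range of $\Pi_k$ while the earlier one is annihilated by $\Pi_k$, so their inner product vanishes --- exactly as in the \texttt{HOSVD} error estimate of~\cite{de2000multilinear}. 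Orthogonality upgrades the telescoping into the Pythagorean identity $\|\mathcal{Y}-\tilde{\mathcal{Y}}\|_F^2 = \sum_k \|(I-\Pi_k)Q_{k-1}\mathcal{Y}\|_F^2$, and since each $\Pi_j$ is non-expansive, each summand is at most $\|E_k\|_F$, giving fact (i).

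Finally I would establish the count in fact (ii). A balanced binary dimension tree on $d$ dimensions is a full binary tree with $2d-1$ nodes, namely $d$ leaves and $d-1$ interior (transfer and root) nodes. The root core $\core_{0,1}=\Sigma_1$ carries no truncation, and as described around \eqref{eq:ht_last_svd} its two children $\core_{1,1},\core_{1,2}$ arise from a single \texttt{SVD} sharing the one residual $E_1$; hence the $2d-2$ non-root nodes furnish $2d-2-1 = 2d-3$ distinct truncated \texttt{SVD}s, which is the denominator appearing in $\varepsilon_{nw}$.

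I expect the main obstacle to be justifying the mutual orthogonality of the telescoped summands in the hierarchical --- as opposed to flat Tucker --- setting. In plain \texttt{HOSVD} the mode projectors act on disjoint index groups and commute outright, which makes the range/annihilation argument immediate; in the HT format a transfer-core projector acts on a reshaped core that has already been restricted to its children's retained subspaces, so the needed commutation must be extracted from the nestedness of the HT subspaces, i.e. from the fact that the retained column space of a parent matricization sits inside the tensor product of the children's orthonormal column spaces. Pinning down this nestedness carefully --- the essential content of the adapted lemma of~\cite{kressner2014algorithm} --- is where the real work lies; once it is in place, the telescoping, orthogonality, non-expansiveness, and counting steps are routine.
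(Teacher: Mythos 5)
Your proposal is correct and takes essentially the same route as the paper: the paper likewise rests on the root-sum-of-squares bound over all truncation nodes — deferring the telescoping/orthogonality (successive-truncation) machinery to the cited literature, namely \cite{kressner2014algorithm} for this theorem and \cite[Lemma 3.8]{grasedyck2010hierarchical} in its own appendix proofs of \Cref{thm:layerwise_approx_error} and \Cref{thm:total_approx_error} — followed by a count of the distinct truncated \texttt{SVD}s and the uniform error split $\varepsilon_{nw}=\varepsilon_{abs}/\sqrt{2d-3}$. Your accounting of $2d-3$ (the $2d-2$ non-root nodes minus one, since the root's two children share the single final \texttt{SVD} in \eqref{eq:ht_last_svd}) matches the paper's, which in the batch setting of \Cref{thm:bht_leaves_to_root} becomes $2d-2$ precisely because the batch-HT root core requires no \texttt{SVD} of its own.
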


                In the subsequent section we describe modifications to the HT format and corresponding approximation algorithms that are needed to adapt to the context of streaming batches of tensors.

    \section{Methodology}\label{sec:methodology}
            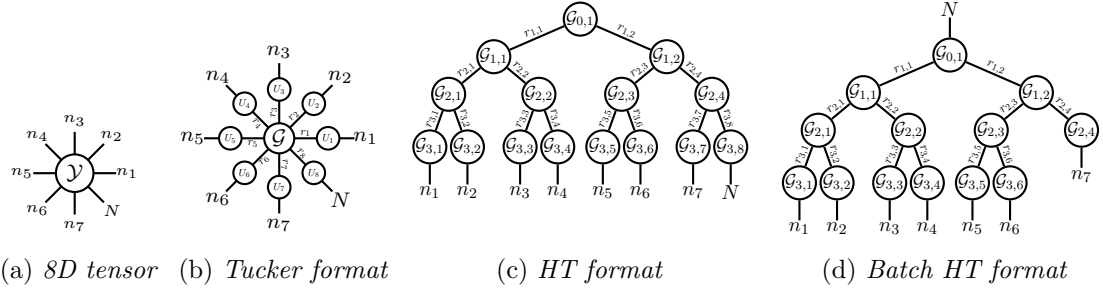
\begin{figure}[h!]
                \centering
                \begin{subfigure}{0.14\textwidth}\centering
                    \begin{tikzpicture}
    \def\k{8} 
    \def\radius{0.25cm} 
    \def\spikelength{0.3cm} 

    \draw[thick] (0,0) circle (\radius);
    \node at (0,0) {\scriptsize $\mathcal{Y}$};

    \foreach \n in {1,...,\k}{
        \pgfmathsetmacro\angle{360/\k * (\n - 1)}
        \draw[thick] (\angle:\radius) -- (\angle:\radius+\spikelength);
        \ifnum\n=\k{
            \node at (\angle:\spikelength + \radius + 0.15cm) {\tiny$N$};
        }
        \else{
            \node at (\angle:\spikelength + \radius + 0.15cm) {\tiny$n_{\n}$};
        }
        \fi
    }
\end{tikzpicture}
                    \caption{\small\emph{8D tensor}}
                \end{subfigure}
                \begin{subfigure}{0.2\textwidth}\centering






\begin{tikzpicture}
    \def\k{8} 
    \def\radius{0.2cm} 
    \def\spikelength{0.5cm} 
    \def\secradius{0.15cm} 
    \def\secspikelength{0.2cm} 

    \draw[thick] (0,0) circle (\radius);
    \node at (0,0) (core) {\scriptsize$\mathcal{G}$};

    \foreach \n in {1,...,\k}{
        \pgfmathsetmacro\angle{360/\k * (\n - 1)}

        \coordinate (sec_center) at (\angle:\spikelength + 1*\secradius);
        \node at (sec_center) (mat_\n) {\scalebox{0.35}{$U_{\n}$}};

        \draw[thick] (sec_center) circle (\secradius);
        
        \draw[thick] (\angle:\spikelength + 2*\secradius) -- (\angle:\spikelength + 2*\secradius + \secspikelength);
    
        \ifnum\n=\k
            \node at (\angle:\spikelength + 2*\secradius + \secspikelength + 0.15cm) {\scriptsize $N$};
        \else
            \node at (\angle:\spikelength + 2*\secradius + \secspikelength + 0.15cm) {\scriptsize $n_{\n}$};
        \fi
    }
    \path[-,thick]
        (0:\radius) edge node[yshift=2pt, sloped, minimum size=0.2cm] {\scalebox{0.4}{$r_{1}$}} (0:\spikelength)
        (45:\radius) edge node[yshift=2pt, sloped, minimum size=0.2cm] {\scalebox{0.4}{$r_{2}$}} (45:\spikelength)
        (90:\radius) edge node[yshift=2pt, sloped, minimum size=0.2cm] {\scalebox{0.4}{$r_{3}$}} (90:\spikelength)
        (135:\radius) edge node[yshift=-2pt, sloped, minimum size=0.2cm] {\scalebox{0.4}{$r_{4}$}} (135:\spikelength)
        (180:\radius) edge node[yshift=-2pt, sloped, minimum size=0.2cm] {\scalebox{0.4}{$r_{5}$}} (180:\spikelength)
        (225:\radius) edge node[yshift=-2pt, sloped, minimum size=0.2cm] {\scalebox{0.4}{$r_{6}$}} (225:\spikelength)
        (270:\radius) edge node[yshift=2pt, sloped, minimum size=0.2cm] {\scalebox{0.4}{$r_{7}$}} (270:\spikelength)
        (315:\radius) edge node[yshift=2pt, sloped, minimum size=0.2cm] {\scalebox{0.4}{$r_{8}$}} (315:\spikelength)
    ;
\end{tikzpicture}
                    \caption{\small\emph{Tucker format}}
                \end{subfigure}
                \begin{subfigure}{0.3\textwidth}\centering
                    \begin{tikzpicture}[
    level 1/.style={sibling distance=2.3cm, level distance=0.5cm},
    level 2/.style={sibling distance=1.2cm, level distance=0.5cm},
    level 3/.style={sibling distance=0.5cm, level distance=0.7cm},
    every node/.style={circle, draw, minimum size=0.2cm, inner sep=0pt},
    edge from parent/.style={draw= none, thick, sloped, anchor=south},
    edge from parent node/.style={midway, sloped},
    every path/.style={thick},
    ]
    \def\core{\mathcal{G}}
    \def\spikelength{0.5cm}

    \node (root) {\scalebox{0.6}{$\core_{0,1}$}}
        child {node (g_1_1) {\scalebox{0.6}{$\core_{1,1}$}}
            child {node (g_2_1) {\scalebox{0.6}{$\core_{2,1}$}}
                child{
                    node (g_3_1) {\scalebox{0.6}{$\core_{3,1}$}}
                }
                child{
                    node (g_3_2) {\scalebox{0.6}{$\core_{3,2}$}}
                }
            }
            child {node (g_2_2) {\scalebox{0.6}{$\core_{2,2}$}}
                child{
                    node (g_3_3) {\scalebox{0.6}{$\core_{3,3}$}}
                }
                child{
                    node (g_3_4) {\scalebox{0.6}{$\core_{3,4}$}}
                }
            }
        }
        child {node (g_1_2) {\scalebox{0.6}{$\core_{1,2}$}}
            child {node (g_2_3) {\scalebox{0.6}{$\core_{2,3}$}}
                child{
                    node (g_3_5) {\scalebox{0.6}{$\core_{3,5}$}}
                }
                child{
                    node (g_3_6) {\scalebox{0.6}{$\core_{3,6}$}}
                }
            }
            child {node (g_2_4) {\scalebox{0.6}{$\core_{2,4}$}}
                child{
                    node (g_3_7) {\scalebox{0.6}{$\core_{3,7}$}}
                }
                child{
                    node (g_3_8) {\scalebox{0.6}{$\core_{3,8}$}}
                }
            }
        };
    \path[-] 
        (root) edge node[yshift=-2pt,draw=none,sloped,anchor=south,minimum size=0.2cm] {\scalebox{0.45}{$r_{1,1}$}} (g_1_1)
        (root) edge node[yshift=-2pt,draw=none,sloped,anchor=south,minimum size=0.2cm] {\scalebox{0.45}{$r_{1,2}$}} (g_1_2)

        (g_1_1) edge node[yshift=-2pt,draw=none,sloped,anchor=south,minimum size=0.2cm] {\scalebox{0.45}{$r_{2,1}$}} (g_2_1)
        (g_1_1) edge node[yshift=-2pt,draw=none,sloped,anchor=south,minimum size=0.2cm] {\scalebox{0.45}{$r_{2,2}$}} (g_2_2)

        (g_1_2) edge node[yshift=-2pt,draw=none,sloped,anchor=south,minimum size=0.2cm] {\scalebox{0.45}{$r_{2,3}$}} (g_2_3)
        (g_1_2) edge node[yshift=-2pt,draw=none,sloped,anchor=south,minimum size=0.2cm] {\scalebox{0.45}{$r_{2,4}$}} (g_2_4)

        (g_2_1) edge node[yshift=-2pt,draw=none,sloped,anchor=south,minimum size=0.2cm] {\scalebox{0.45}{$r_{3,1}$}} (g_3_1)
        (g_2_1) edge node[yshift=-2pt,draw=none,sloped,anchor=south,minimum size=0.2cm] {\scalebox{0.45}{$r_{3,2}$}} (g_3_2)

        (g_2_2) edge node[yshift=-2pt,draw=none,sloped,anchor=south,minimum size=0.2cm] {\scalebox{0.45}{$r_{3,3}$}} (g_3_3)
        (g_2_2) edge node[yshift=-2pt,draw=none,sloped,anchor=south,minimum size=0.2cm] {\scalebox{0.45}{$r_{3,4}$}} (g_3_4)

        (g_2_3) edge node[yshift=-2pt,draw=none,sloped,anchor=south,minimum size=0.2cm] {\scalebox{0.45}{$r_{3,5}$}} (g_3_5)
        (g_2_3) edge node[yshift=-2pt,draw=none,sloped,anchor=south,minimum size=0.2cm] {\scalebox{0.45}{$r_{3,6}$}} (g_3_6)

        (g_2_4) edge node[yshift=-2pt,draw=none,sloped,anchor=south,minimum size=0.2cm] {\scalebox{0.45}{$r_{3,7}$}} (g_3_7)
        (g_2_4) edge node[yshift=-2pt,draw=none,sloped,anchor=south,minimum size=0.2cm] {\scalebox{0.45}{$r_{3,8}$}} (g_3_8)
        ;
  \newcounter{k}
  \setcounter{k}{1}

  \foreach \pos in {(g_3_1),(g_3_2),(g_3_3),(g_3_4),(g_3_5),(g_3_6),(g_3_7)} {
      \draw[thick] \pos -- ++(270:\spikelength) node[pos=1.4, draw=none] {\scalebox{0.7}{$n_{\thek}$}};
      \stepcounter{k};
  }
  \draw[thick] (g_3_8) -- ++(270:\spikelength) node[pos=1.4, draw=none] {\scalebox{0.7}{$N$}};
  
\end{tikzpicture}
                    \caption{\small\emph{HT format}}
                    \label{fig:htucker_as_node}
                \end{subfigure}
                \begin{subfigure}{0.32\textwidth}\centering
                    \begin{tikzpicture}[
    level 1/.style={sibling distance=2.3cm, level distance=0.5cm},
    level 2/.style={sibling distance=1.2cm, level distance=0.5cm},
    level 3/.style={sibling distance=0.5cm, level distance=0.7cm},
    every node/.style={circle, draw, minimum size=0.2cm, inner sep=0pt},
    edge from parent/.style={draw= none, thick, sloped, anchor=south},
    edge from parent node/.style={midway, sloped},
    every path/.style={thick},
    ]
    \def\core{\mathcal{G}}
    \def\spikelength{0.5cm}

    \node (root) {\scalebox{0.6}{$\core_{0,1}$}}
        child {node (g_1_1) {\scalebox{0.6}{$\core_{1,1}$}}
            child {node (g_2_1) {\scalebox{0.6}{$\core_{2,1}$}}
                child{
                    node (g_3_1) {\scalebox{0.6}{$\core_{3,1}$}}
                }
                child{
                    node (g_3_2) {\scalebox{0.6}{$\core_{3,2}$}}
                }
            }
            child {node (g_2_2) {\scalebox{0.6}{$\core_{2,2}$}}
                child{
                    node (g_3_3) {\scalebox{0.6}{$\core_{3,3}$}}
                }
                child{
                    node (g_3_4) {\scalebox{0.6}{$\core_{3,4}$}}
                }
            }
        }
        child {node (g_1_2) {\scalebox{0.6}{$\core_{1,2}$}}
            child {node (g_2_3) {\scalebox{0.6}{$\core_{2,3}$}}
                child{
                    node (g_3_5) {\scalebox{0.6}{$\core_{3,5}$}}
                }
                child{
                    node (g_3_6) {\scalebox{0.6}{$\core_{3,6}$}}
                }
            }
            child {node (g_2_4) {\scalebox{0.6}{$\core_{2,4}$}}
            }
        };
    \path[-] 
        (root) edge node[yshift=-2pt,draw=none,sloped,anchor=south,minimum size=0.2cm] {\scalebox{0.45}{$r_{1,1}$}} (g_1_1)
        (root) edge node[yshift=-2pt,draw=none,sloped,anchor=south,minimum size=0.2cm] {\scalebox{0.45}{$r_{1,2}$}} (g_1_2)

        (g_1_1) edge node[yshift=-2pt,draw=none,sloped,anchor=south,minimum size=0.2cm] {\scalebox{0.45}{$r_{2,1}$}} (g_2_1)
        (g_1_1) edge node[yshift=-2pt,draw=none,sloped,anchor=south,minimum size=0.2cm] {\scalebox{0.45}{$r_{2,2}$}} (g_2_2)

        (g_1_2) edge node[yshift=-2pt,draw=none,sloped,anchor=south,minimum size=0.2cm] {\scalebox{0.45}{$r_{2,3}$}} (g_2_3)
        (g_1_2) edge node[yshift=-2pt,draw=none,sloped,anchor=south,minimum size=0.2cm] {\scalebox{0.45}{$r_{2,4}$}} (g_2_4)

        (g_2_1) edge node[yshift=-2pt,draw=none,sloped,anchor=south,minimum size=0.2cm] {\scalebox{0.45}{$r_{3,1}$}} (g_3_1)
        (g_2_1) edge node[yshift=-2pt,draw=none,sloped,anchor=south,minimum size=0.2cm] {\scalebox{0.45}{$r_{3,2}$}} (g_3_2)

        (g_2_2) edge node[yshift=-2pt,draw=none,sloped,anchor=south,minimum size=0.2cm] {\scalebox{0.45}{$r_{3,3}$}} (g_3_3)
        (g_2_2) edge node[yshift=-2pt,draw=none,sloped,anchor=south,minimum size=0.2cm] {\scalebox{0.45}{$r_{3,4}$}} (g_3_4)

        (g_2_3) edge node[yshift=-2pt,draw=none,sloped,anchor=south,minimum size=0.2cm] {\scalebox{0.45}{$r_{3,5}$}} (g_3_5)
        (g_2_3) edge node[yshift=-2pt,draw=none,sloped,anchor=south,minimum size=0.2cm] {\scalebox{0.45}{$r_{3,6}$}} (g_3_6)

        ;
  \newcounter{m}
  \setcounter{m}{1}

  \foreach \pos in {(g_3_1),(g_3_2),(g_3_3),(g_3_4),(g_3_5),(g_3_6),(g_2_4)} {
      \draw[thick] \pos -- ++(270:\spikelength) node[pos=1.4, draw=none] {\scalebox{0.7}{$n_{\them}$}};
      \stepcounter{m};
  }
  \draw[thick] (root) -- ++(90:\spikelength) node[pos=1.4, draw=none] {\scalebox{0.7}{$N$}};
\end{tikzpicture}
                    \caption{\small\emph{Batch HT format}}
                    \label{fig:htucker_batch_new}
                \end{subfigure}
                
                \caption{\small\emph{Tensor network diagrams of an 8D tensor and its representations in Tucker format, hierarchical Tucker format, and batch hierarchical Tucker format. The streaming/batch dimension is labeled $N$}}
                \label{fig:batch_hierarchical_tucker}
            \end{figure}

            This section first presents the idea of computing an approximation for a batch of similar tensors in hierarchical Tucker format, then proposes a method to update an existing approximation in batch hierarchical Tucker format when new batches of tensors become available.

    \subsection{Batch hierarchical Tucker}\label{sec:batch_ht}
        In this section, we aim to provide a solution to \Cref{prob:batch_ht} by presenting a hierarchical Tucker decomposition algorithm for batches of tensors.
        One approach to compressing a batch of tensors in HT format would be to treat the batch dimension as an additional dimension of the tensor, resulting in the  batch dimension represented as another Tucker leaf on the $p$-th layer, as shown in Figure~\ref{fig:htucker_as_node}. Assuming that the individual tensors in a batch are similar, i.e., low rank within the batch, such an approach can lead to an inefficient/inaccurate representation. 
        As an alternative to adding a new leaf, we propose grouping the batch dimension into the root, as shown in Figure~\ref{fig:htucker_batch_new}. To do this, we exclude the batch dimension from the dimension tree during construction as if we are decomposing a single tensor from the batch. 

        Specifically, for an $N$-batch of $d$-dimensional tensors $\mathcal{Y}\in\reals^{n_{1}\times\cdots\times n_{d}\times N}$, we use a dimension tree $\tree$ corresponding to a tensor with only the first $d$ dimensions of size $n_{1},\dots,n_{d}$.  Throughout the decomposition process, the batch dimension is absorbed in the \texttt{SVD} computation and leaf/node contraction.
        Practically, this is performed by appending the batch size $N$ to each layer's index set $\indexset_{\tree_{\ell}}$ for $\ell=0,\dots,p$. 
        The batch dimension remains intact throughout the decomposition process, and no SVD is performed for a reshaping relevant to this last dimension.
        As an example, after performing \texttt{HOSVD} on the $p$-th layer (except the batch dimension $N$) and contracting the obtained leaves as shown in \Cref{eq:ht_leaf_contraction}, we end up with the intermediate core tensor $\bar{\mathcal{C}}_{p}\in\reals^{\rank_{1}\times\cdots\times \rank_{|\tree_p|} \times n_{|\tree_p|+1} \times \cdots \times n_{d} \times N}$. We then reshape $\bar{\mathcal{C}}_{p}$ according to the modified index set $\indexset_{\tree_{p-1}}$ and carry on with the decomposition. Since the batch dimension has been excluded from any \texttt{HOSVD} computation throughout the entire process, the root core $\mathcal{G}_{0,1}$ ends up being 3 dimensional where the third/new dimension has size $N$. The difference between an HT and a batch-HT is shown in \Cref{fig:batch_hierarchical_tucker}. \Cref{fig:batch_htucker_steps} depicts this procedure step by step for a tensor with $d=5$.\footnote{For simplicity of presentation, the above discussion assumed that the batch dimension is ordered as the last dimension of the tensor. However, as long as the batch dimension is added to the dimension tree properly (i.e. inserted to the correct order), the position of the batch index does not matter.}
        
        In this format, only the dimensionality of the root node is increased by one, all other transfer transfer nodes are still three dimensional and leaf-nodes remain two-dimensional. The pseudocode of the corresponding algorithm \texttt{BHT-l2r} is presented in \Cref{alg:batch_htucker}. Theoretical guarantees for the approximation error upper bound for the \texttt{BHT-l2r} algorithm essentially follows from Theorem~\ref{thm:ht_nodewise_error} and is provided in \Cref{app:proofs} as \Cref{thm:bht_leaves_to_root} for completion. \Cref{app:ht_vs_bht} presents a detailed comparison between HT and BHT formats on both scientific and image data.
        \begin{algorithm}[!h]
            \caption{\texttt{BHT-l2r}: Error truncated leaves-to-root batch hierarchical Tucker decomposition}
            \label{alg:batch_htucker}
            \begin{algorithmic}[1]
                \Input
                    \Desc{$\mathcal{Y}\in\reals^{n_{1}\times\cdots\times n_{d}\times \newdim}$}{Input tensor}
                    \Desc{$\tree$}{Dimension tree of the decomposition with depth $p$}
                    \Desc{$\varepsilon_{rel}$}{Relative error tolerance}
                \EndInput
                \Output
                    \Desc{$\hierarchical_{\mathcal{Y}}$}{Hierarchical Tucker representation with leaves and cores $\core_{\ell,i_{\ell}}$}
                \EndOutput
                \State $\varepsilon_{nw}\gets \varepsilon_{rel}\|\mathcal{Y}\|_{F}/\sqrt{2d-2}$ \Comment{Node-wise error tolerance for SVD}
                \State $\mathcal{C}\gets \mathcal{Y}$
                \For{$i=1,\dots,|\tree_{p}|$} \Comment{Compute the leaves on layer $p$}
                    \State $C \gets \texttt{unfold}\left( \mathcal{C}, d_{p,i} \right)$ \Comment{$d_{p,i}\in\{1,\dots,d\}$ is the dimension corresponding to the leaf node $\node_{p,i}$}
                    \State $\core_{p,i}\gets \texttt{SVD}\left(C ,\varepsilon_{nw} \right)$ \Comment{Only the left singular vectors are kept with $\ranks_{p,i}=r_{p,i}$} \label{alg:line:svd_leaf}
                    \State $\indexset_{\node_{p,i}} \gets \{n_{i},r_{p,i}\}$ \Comment{Create index set for the leaf nodes}
                \EndFor
                \State $\mathcal{C} \gets \mathcal{C} \underset{\mathcal{I}}{\times} \llbracket \core_{p,1},\dots, \core_{p,|\tree_{p}|} \rrbracket $ \Comment{$\mathcal{I}=\{d_{p,i}; i=1,\dots,|\tree_{p}|\}$ as in \eqref{eq:ht_leaf_contraction}}
                \For{$\ell = p-1$ to $1$} 
                    \State $\mathcal{C}\gets \reshape\left( \mathcal{C}, \indexset_{\tree_{\ell}} \right)$ \Comment{$\indexset_{\tree_{\ell}} \cup \{\newdim\}$ with $\indexset_{\tree_{\ell}}$ constructed using \eqref{eq:ht_index_set_construction}}
                    \State $\core_{\ell,1},\dots,\core_{\ell,|\tree_{\ell}|} \gets \texttt{HOSVD}(\mathcal{C},\varepsilon_{nw})$ \Comment{$\indexset_{\node_{\ell,j}}$ are created using \eqref{eq:ht_tucker_core_reshape} for $j=1,\dots,|\tree_{\ell}|$} 
                    \For{$j=1,\dots,|\tree_{\ell}|$}
                        \State $\core_{\ell,j}\gets \reshape\left( \core_{\ell,j},\indexset_{\node_{\ell,j}}\right)$ \Comment{Folds $\core_{\ell,j}$ into 3D if $\node_{\ell,i_{\ell}}$ is a transfer node}
                        \EndFor
                    \State $\mathcal{C} \gets \mathcal{C} \times \llbracket \core_{\ell,1},\dots, \core_{\ell,|\tree_{\ell}|} \rrbracket $
                \EndFor
                \State $\mathcal{G}_{0,1}\gets \reshape\left( \mathcal{C},\indexset_{\node_{0,1}} \right)$ \Comment{$\indexset_{\node_{0,1}}=\{r_{1,1},r_{1,2},\newdim\}$}
            \end{algorithmic}
        \end{algorithm}

        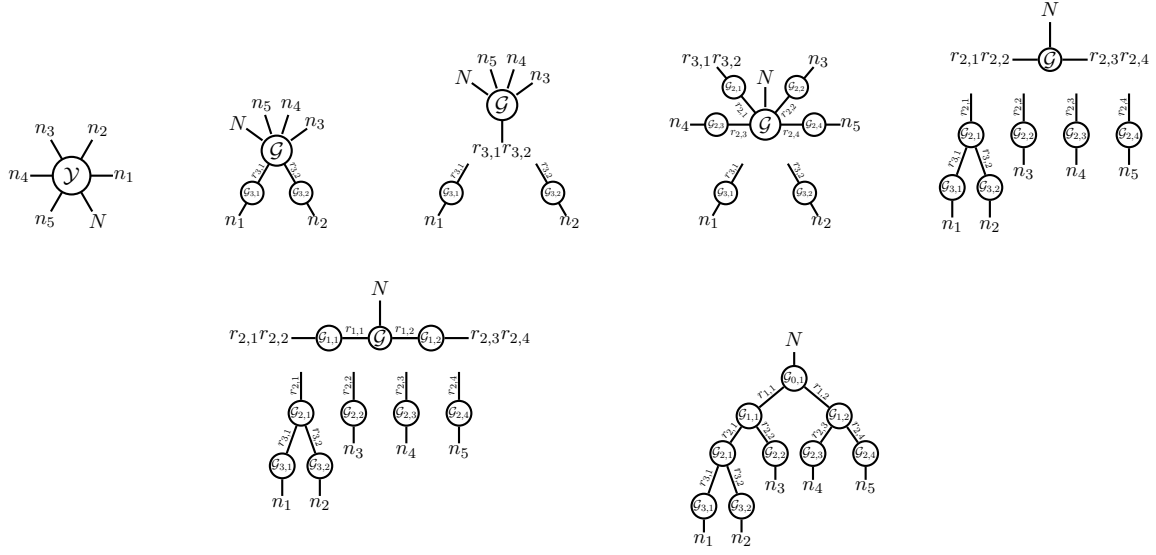
\begin{figure}[!h]
            \centering
            \begin{subfigure}{0.115\columnwidth}\centering
                \begin{tikzpicture}
    \def\k{6} 
    \def\radius{0.25cm} 
    \def\spikelength{0.3cm} 

    \draw[thick] (0,0) circle (\radius);
    \node at (0,0) {\scriptsize $\mathcal{Y}$};

    \foreach \n in {1,...,\k}{
        \pgfmathsetmacro\angle{360/\k * (\n - 1)}
        \draw[thick] (\angle:\radius) -- (\angle:\radius+\spikelength);
        \ifnum\n=\k{
            \node at (\angle:\spikelength + \radius + 0.15cm) {\scalebox{0.7}{$N$}};
        }
        \else{
            \node at (\angle:\spikelength + \radius + 0.15cm) {\scalebox{0.7}{$n_{\n}$}};
        }
        \fi
    }
\end{tikzpicture}
            \end{subfigure}\hfill
            \begin{subfigure}{0.1\columnwidth}\centering
                \begin{tikzpicture}
    \def\k{6} 
    \def\radius{0.2cm} 
    \def\spikelength{0.5cm} 
    \def\secradius{0.15cm} 
    \def\secspikelength{0.2cm} 

    \draw[thick] (0,0) circle (\radius);
    \node at (0,0) (core) {\scriptsize$\mathcal{G}$};

    \coordinate (sec_center) at (240:\spikelength + 1*\secradius);
    \coordinate (sec_edge) at (240:\spikelength + 2*\secradius);
    \node at (sec_center) (g_3_1) {\scalebox{0.35}{$\core_{3,1}$}};
    \draw[thick] (sec_center) circle (\secradius);
    \draw[thick] (sec_edge) -- ++(240:\secspikelength) node[pos=1.5, draw=none] {\scalebox{0.7}{$n_{1}$}};

    \coordinate (sec_center) at (300:\spikelength + 1*\secradius);
    \coordinate (sec_edge) at (300:\spikelength + 2*\secradius);
    \node at (sec_center) (g_3_2) {\scalebox{0.35}{$\core_{3,2}$}};
    \draw[thick] (sec_center) circle (\secradius);
    \draw[thick] (sec_edge) -- ++(300:\secspikelength) node[pos=1.5, draw=none] {\scalebox{0.7}{$n_{2}$}};

    \coordinate (sec_center) at (36:1*\spikelength + 0*\secradius);
    \draw[thick] (root) -- (sec_center) node[pos=1.5, draw=none] {\scalebox{0.7}{$n_{3}$}};
    \coordinate (sec_center) at (72:1*\spikelength + 0*\secradius);
    \draw[thick] (root) -- (sec_center) node[pos=1.5, draw=none] {\scalebox{0.7}{$n_{4}$}};
    \coordinate (sec_center) at (108:1*\spikelength + 0*\secradius);
    \draw[thick] (root) -- (sec_center) node[pos=1.5, draw=none] {\scalebox{0.7}{$n_{5}$}};
    \coordinate (sec_center) at (144:1*\spikelength + 0*\secradius);
    \draw[thick] (root) -- (sec_center) node[pos=1.5, draw=none] {\scalebox{0.7}{$N$}};

    \path[-,thick]
        (240:\radius) edge node[yshift=2pt, sloped, minimum size=0.2cm] {\scalebox{0.4}{$r_{3,1}$}} (240:\spikelength)
        (300:\radius) edge node[yshift=2pt, sloped, minimum size=0.2cm] {\scalebox{0.4}{$r_{3,2}$}} (300:\spikelength)
    ;
\end{tikzpicture}
            \end{subfigure}\hfill
            \begin{subfigure}{0.14\columnwidth}\centering
                \begin{tikzpicture}
    \def\k{6} 
    \def\radius{0.2cm} 
    \def\spikelength{0.5cm} 
    \def\secradius{0.15cm} 
    \def\secspikelength{0.2cm} 
    \def\offset{0.7cm}

    \draw[thick] (0,0) circle (\radius);
    \node at (0,0) (core) {\scriptsize$\mathcal{G}$};

    \coordinate (sec_center) at (240:\spikelength + 1*\secradius+\offset);
    \coordinate (sec_edge) at (240:\spikelength + 2*\secradius + \offset);
    \node at (sec_center) (g_3_1) {\scalebox{0.35}{$\core_{3,1}$}};
    \draw[thick] (sec_center) circle (\secradius);
    \draw[thick] (sec_edge) -- ++(240:\secspikelength) node[pos=1.5, draw=none] {\scalebox{0.7}{$n_{1}$}};

    \coordinate (sec_center) at (300:\spikelength + 1*\secradius+\offset);
    \coordinate (sec_edge) at (300:\spikelength + 2*\secradius + \offset);
    \node at (sec_center) (g_3_2) {\scalebox{0.35}{$\core_{3,2}$}};
    \draw[thick] (sec_center) circle (\secradius);
    \draw[thick] (sec_edge) -- ++(300:\secspikelength) node[pos=1.5, draw=none] {\scalebox{0.7}{$n_{2}$}};

    \coordinate (sec_center) at (36:1*\spikelength + 0*\secradius);
    \draw[thick] (root) -- (sec_center) node[pos=1.5, draw=none] {\scalebox{0.7}{$n_{3}$}};
    \coordinate (sec_center) at (72:1*\spikelength + 0*\secradius);
    \draw[thick] (root) -- (sec_center) node[pos=1.5, draw=none] {\scalebox{0.7}{$n_{4}$}};
    \coordinate (sec_center) at (108:1*\spikelength + 0*\secradius);
    \draw[thick] (root) -- (sec_center) node[pos=1.5, draw=none] {\scalebox{0.7}{$n_{5}$}};
    \coordinate (sec_center) at (144:1*\spikelength + 0*\secradius);
    \draw[thick] (root) -- (sec_center) node[pos=1.5, draw=none] {\scalebox{0.7}{$N$}};

    \path[-,thick]
        (240:\radius+\offset) edge node[yshift=2pt, sloped, minimum size=0.2cm] {\scalebox{0.4}{$r_{3,1}$}} (240:\spikelength+\offset)
        (300:\radius+\offset) edge node[yshift=2pt, sloped, minimum size=0.2cm] {\scalebox{0.4}{$r_{3,2}$}} (300:\spikelength+\offset)

        (270:\radius) edge node[yshift=-1pt, anchor=north, minimum size=0.2cm] {\scalebox{0.7}{$r_{3,1}r_{3,2}$}} (270:\spikelength)
    ;
\end{tikzpicture}
            \end{subfigure}\hfill
            \begin{subfigure}{0.17\columnwidth}\centering
                \begin{tikzpicture}
    \def\k{6} 
    \def\radius{0.2cm} 
    \def\spikelength{0.5cm} 
    \def\secradius{0.15cm} 
    \def\secspikelength{0.2cm} 
    \def\offset{0.4cm}
    \def\smalloffset{0.15cm}
    \def\minioffset{0.05cm}

    \draw[thick] (0,0) circle (\radius);
    \node at (0,0) (core) {\scalebox{0.7}{$\mathcal{G}$}};

    \coordinate (sec_center) at (240:\spikelength + 1*\secradius+\offset);
    \coordinate (sec_edge) at (240:\spikelength + 2*\secradius + \offset);
    \node at (sec_center) (g_3_1) {\scalebox{0.35}{$\core_{3,1}$}};
    \draw[thick] (sec_center) circle (\secradius);
    \draw[thick] (sec_edge) -- ++(240:\secspikelength) node[pos=1.5, draw=none] {\scalebox{0.7}{$n_{1}$}};

    \coordinate (sec_center) at (300:\spikelength + 1*\secradius+\offset);
    \coordinate (sec_edge) at (300:\spikelength + 2*\secradius + \offset);
    \node at (sec_center) (g_3_2) {\scalebox{0.35}{$\core_{3,2}$}};
    \draw[thick] (sec_center) circle (\secradius);
    \draw[thick] (sec_edge) -- ++(300:\secspikelength) node[pos=1.5, draw=none] {\scalebox{0.7}{$n_{2}$}};

    \coordinate (sec_center) at (0:\spikelength + 1*\secradius);
    \coordinate (sec_edge) at (0:\spikelength + 2*\secradius);
    \draw[thick] (sec_center) circle (\secradius);
    \node at (sec_center) (g_3_2) {\scalebox{0.35}{$\core_{2,4}$}};
    \draw[thick] (sec_edge) -- ++(0:\secspikelength) node[pos=1.7, draw=none] {\scalebox{0.7}{$n_{5}$}};

    \coordinate (sec_center) at (50:\spikelength + 1*\secradius);
    \coordinate (sec_edge) at (50:\spikelength + 2*\secradius);
    \draw[thick] (sec_center) circle (\secradius);
    \node at (sec_center) (g_3_2) {\scalebox{0.35}{$\core_{2,2}$}};
    \draw[thick] (sec_edge) -- ++(50:\secspikelength) node[pos=1.5, draw=none] {\scalebox{0.7}{$n_{3}$}};

    \coordinate (sec_center) at (130:\spikelength + 1*\secradius);
    \coordinate (sec_edge) at (130:\spikelength + 2*\secradius);
    \draw[thick] (sec_center) circle (\secradius);
    \node at (sec_center) (g_3_2) {\scalebox{0.35}{$\core_{2,1}$}};
    \draw[thick] (sec_edge) -- ++(130:\secspikelength) node[pos=1.5, draw=none] {\scalebox{0.7}{$r_{3,1}r_{3,2}$}};

    \coordinate (sec_center) at (180:\spikelength + 1*\secradius);
    \coordinate (sec_edge) at (180:\spikelength + 2*\secradius);
    \draw[thick] (sec_center) circle (\secradius);
    \node at (sec_center) (g_3_2) {\scalebox{0.35}{$\core_{2,3}$}};
    \draw[thick] (sec_edge) -- ++(180:\secspikelength) node[pos=1.7, draw=none] {\scalebox{0.7}{$n_{4}$}};

    \coordinate (sec_center) at (90:1*\spikelength + 0*\secradius);
    \draw[thick] (root) -- (sec_center) node[pos=1.5, draw=none] {\scalebox{0.7}{$N$}};

    \path[-,thick]
        (240:\radius+\offset) edge node[yshift=2pt, sloped, minimum size=0.2cm] {\scalebox{0.4}{$r_{3,1}$}} (240:\spikelength+\offset)
        (300:\radius+\offset) edge node[yshift=2pt, sloped, minimum size=0.2cm] {\scalebox{0.4}{$r_{3,2}$}} (300:\spikelength+\offset)

        (0:\radius) edge node[yshift=2pt, sloped, anchor=north, minimum size=0.2cm] {\scalebox{0.4}{$r_{2,4}$}} (0:\spikelength)
        (50:\radius) edge node[yshift=2pt, sloped, anchor=north, minimum size=0.2cm] {\scalebox{0.4}{$r_{2,2}$}} (50:\spikelength)
        (130:\radius) edge node[yshift=2pt, sloped, anchor=north, minimum size=0.2cm] {\scalebox{0.4}{$r_{2,1}$}} (130:\spikelength)
        (180:\radius) edge node[yshift=2pt, sloped, anchor=north, minimum size=0.2cm] {\scalebox{0.4}{$r_{2,3}$}} (180:\spikelength)
    ;
\end{tikzpicture}
            \end{subfigure}\hfill
            \begin{subfigure}{0.18\columnwidth}\centering
                \begin{tikzpicture}[
    level 1/.style={sibling distance=0.7cm, level distance=1cm},
    level 2/.style={sibling distance=0.5cm, level distance=0.7cm},
    level 3/.style={sibling distance=0.5cm, level distance=0.7cm},
    every node/.style={circle, draw, minimum size=0.3cm, inner sep=0pt},
    edge from parent/.style={draw= none, thick, sloped, anchor=south},
    edge from parent node/.style={midway, sloped},
    every path/.style={thick},
    ]
    \def\core{\mathcal{G}}
    \def\leafspikelength{0.4cm}
    \def\spikelength{0.5cm}

    \node (root) {\scalebox{0.7}{$\core$}}
            child {node (g_2_1) {\scalebox{0.45}{$\core_{2,1}$}}
                child{
                    node (g_3_1) {\scalebox{0.45}{$\core_{3,1}$}}
                }
                child{
                    node (g_3_2) {\scalebox{0.45}{$\core_{3,2}$}}
                }
            }
            child {node (g_2_2) {\scalebox{0.45}{$\core_{2,2}$}}
            }
            child {node (g_2_3) {\scalebox{0.45}{$\core_{2,3}$}}
            }
            child {node (g_2_4) {\scalebox{0.45}{$\core_{2,4}$}}
            }
        ;
    \path[-] 



        (g_2_1) edge node[yshift=-2pt,draw=none,sloped,anchor=south,minimum size=0.2cm] {\scalebox{0.45}{$r_{3,1}$}} (g_3_1)
        (g_2_1) edge node[yshift=-2pt,draw=none,sloped,anchor=south,minimum size=0.2cm] {\scalebox{0.45}{$r_{3,2}$}} (g_3_2)


    ;

  \newcounter{s}
  \setcounter{s}{1}

  \foreach \pos in {(g_3_1),(g_3_2),(g_2_2),(g_2_3),(g_2_4)} {
      \draw[thick] \pos -- ++(270:\leafspikelength) node[pos=1.5, draw=none] {\scalebox{0.7}{$n_{\thes}$}};
      \stepcounter{s};
  }
  \draw[thick] (root) -- ++(90:\spikelength) node[pos=1.5, draw=none] {\scalebox{0.7}{$N$}};
  \draw[thick] (root) -- ++(0:\spikelength) node[anchor= west, draw=none] {\scalebox{0.7}{$r_{2,3}r_{2,4}$}};
  \draw[thick] (root) -- ++(180:\spikelength) node[anchor=east, draw=none] {\scalebox{0.7}{$r_{2,1}r_{2,2}$}};
  \draw[thick] (g_2_1) -- ++(90:\spikelength+0.05cm) node[rotate=90, yshift=2pt, xshift=-5pt, draw=none] {\scalebox{0.45}{$r_{2,1}$}};
  \draw[thick] (g_2_2) -- ++(90:\spikelength+0.05cm) node[rotate=90, yshift=2pt, xshift=-5pt, draw=none] {\scalebox{0.45}{$r_{2,2}$}};
  \draw[thick] (g_2_3) -- ++(90:\spikelength+0.05cm) node[rotate=90, yshift=2pt, xshift=-5pt, draw=none] {\scalebox{0.45}{$r_{2,3}$}};
  \draw[thick] (g_2_4) -- ++(90:\spikelength+0.05cm) node[rotate=90, yshift=2pt, xshift=-5pt, draw=none] {\scalebox{0.45}{$r_{2,4}$}};
\end{tikzpicture}
            \end{subfigure}

            \begin{subfigure}{0.24\columnwidth}\centering
                \begin{tikzpicture}[
    level 1/.style={sibling distance=0.7cm, level distance=1cm},
    level 2/.style={sibling distance=0.5cm, level distance=0.7cm},
    level 3/.style={sibling distance=0.5cm, level distance=0.7cm},
    every node/.style={circle, draw, minimum size=0.3cm, inner sep=0pt},
    edge from parent/.style={draw= none, thick, sloped, anchor=south},
    edge from parent node/.style={midway, sloped},
    every path/.style={thick},
    ]
    \def\core{\mathcal{G}}
    \def\leafspikelength{0.4cm}
    \def\spikelength{0.5cm}

    \node (root) {\scalebox{0.7}{$\core$}}
            child {node (g_2_1) {\scalebox{0.45}{$\core_{2,1}$}}
                child{
                    node (g_3_1) {\scalebox{0.45}{$\core_{3,1}$}}
                }
                child{
                    node (g_3_2) {\scalebox{0.45}{$\core_{3,2}$}}
                }
            }
            child {node (g_2_2) {\scalebox{0.45}{$\core_{2,2}$}}
            }
            child {node (g_2_3) {\scalebox{0.45}{$\core_{2,3}$}}
            }
            child {node (g_2_4) {\scalebox{0.45}{$\core_{2,4}$}}
            }
        ;
    
  \newcounter{t}
  \setcounter{t}{1}

  \foreach \pos in {(g_3_1),(g_3_2),(g_2_2),(g_2_3),(g_2_4)} {
      \draw[thick] \pos -- ++(270:\leafspikelength) node[pos=1.5, draw=none] {\scalebox{0.7}{$n_{\thet}$}};
      \stepcounter{t};
  }
  \draw[thick] (root) -- ++(90:\spikelength) node[pos=1.5, draw=none] {\scalebox{0.7}{$N$}};
  \draw[thick] (root) -- ++(180:\spikelength) node[anchor=east, draw] (g_1_1) {\scalebox{0.45}{$\core_{1,1}$}};
  \draw[thick] (root) -- ++(0:\spikelength) node[anchor= west, draw] (g_1_2) {\scalebox{0.45}{$\core_{1,2}$}};
  \draw[thick] (g_2_1) -- ++(90:\spikelength+0.05cm) node[rotate=90, yshift=2pt, xshift=-5pt, draw=none] {\scalebox{0.45}{$r_{2,1}$}};
  \draw[thick] (g_2_2) -- ++(90:\spikelength+0.05cm) node[rotate=90, yshift=2pt, xshift=-5pt, draw=none] {\scalebox{0.45}{$r_{2,2}$}};
  \draw[thick] (g_2_3) -- ++(90:\spikelength+0.05cm) node[rotate=90, yshift=2pt, xshift=-5pt, draw=none] {\scalebox{0.45}{$r_{2,3}$}};
  \draw[thick] (g_2_4) -- ++(90:\spikelength+0.05cm) node[rotate=90, yshift=2pt, xshift=-5pt, draw=none] {\scalebox{0.45}{$r_{2,4}$}};

  \draw[thick] (g_1_1) -- ++(180:\spikelength) node[anchor=east, draw=none] {\scalebox{0.7}{$r_{2,1}r_{2,2}$}};
  \draw[thick] (g_1_2) -- ++(0:\spikelength) node[anchor=west, draw=none] {\scalebox{0.7}{$r_{2,3}r_{2,4}$}};

  \path[-] 
        (root) edge node[yshift=-2pt,draw=none,sloped,anchor=south,minimum size=0.2cm] {\scalebox{0.45}{$r_{1,1}$}} (g_1_1)
        (root) edge node[yshift=-2pt,draw=none,sloped,anchor=south,minimum size=0.2cm] {\scalebox{0.45}{$r_{1,2}$}} (g_1_2)



        (g_2_1) edge node[yshift=-2pt,draw=none,sloped,anchor=south,minimum size=0.2cm] {\scalebox{0.45}{$r_{3,1}$}} (g_3_1)
        (g_2_1) edge node[yshift=-2pt,draw=none,sloped,anchor=south,minimum size=0.2cm] {\scalebox{0.45}{$r_{3,2}$}} (g_3_2)


    ;

\end{tikzpicture}
            \end{subfigure}\hspace{2cm}
            \begin{subfigure}{0.22\columnwidth}\centering
                \begin{tikzpicture}[
    level 1/.style={sibling distance=1.2cm, level distance=0.5cm},
    level 2/.style={sibling distance=0.7cm, level distance=0.5cm},
    level 3/.style={sibling distance=0.5cm, level distance=0.7cm},
    every node/.style={circle, draw, minimum size=0.15cm, inner sep=0pt},
    edge from parent/.style={draw= none, thick, sloped, anchor=south},
    edge from parent node/.style={midway, sloped},
    every path/.style={thick},
    ]
    \def\core{\mathcal{G}}
    \def\spikelength{0.35cm}

    \node (root) {\scalebox{0.45}{$\core_{0,1}$}}
        child {node (g_1_1) {\scalebox{0.45}{$\core_{1,1}$}}
            child {node (g_2_1) {\scalebox{0.45}{$\core_{2,1}$}}
                child{
                    node (g_3_1) {\scalebox{0.45}{$\core_{3,1}$}}
                }
                child{
                    node (g_3_2) {\scalebox{0.45}{$\core_{3,2}$}}
                }
            }
            child {node (g_2_2) {\scalebox{0.45}{$\core_{2,2}$}}
            }
        }
        child {node (g_1_2) {\scalebox{0.45}{$\core_{1,2}$}}
            child {node (g_2_3) {\scalebox{0.45}{$\core_{2,3}$}}
            }
            child {node (g_2_4) {\scalebox{0.45}{$\core_{2,4}$}}
            }
        };
    \path[-] 
        (root) edge node[yshift=-2pt,draw=none,sloped,anchor=south,minimum size=0.2cm] {\scalebox{0.45}{$r_{1,1}$}} (g_1_1)
        (root) edge node[yshift=-2pt,draw=none,sloped,anchor=south,minimum size=0.2cm] {\scalebox{0.45}{$r_{1,2}$}} (g_1_2)

        (g_1_1) edge node[yshift=-2pt,draw=none,sloped,anchor=south,minimum size=0.2cm] {\scalebox{0.45}{$r_{2,1}$}} (g_2_1)
        (g_1_1) edge node[yshift=-2pt,draw=none,sloped,anchor=south,minimum size=0.2cm] {\scalebox{0.45}{$r_{2,2}$}} (g_2_2)

        (g_1_2) edge node[yshift=-2pt,draw=none,sloped,anchor=south,minimum size=0.2cm] {\scalebox{0.45}{$r_{2,3}$}} (g_2_3)
        (g_1_2) edge node[yshift=-2pt,draw=none,sloped,anchor=south,minimum size=0.2cm] {\scalebox{0.45}{$r_{2,4}$}} (g_2_4)

        (g_2_1) edge node[yshift=-2pt,draw=none,sloped,anchor=south,minimum size=0.2cm] {\scalebox{0.45}{$r_{3,1}$}} (g_3_1)
        (g_2_1) edge node[yshift=-2pt,draw=none,sloped,anchor=south,minimum size=0.2cm] {\scalebox{0.45}{$r_{3,2}$}} (g_3_2)


    ;

  \newcounter{q}
  \setcounter{q}{1}

  \foreach \pos in {(g_3_1),(g_3_2),(g_2_2),(g_2_3),(g_2_4)} {
      \draw[thick] \pos -- ++(270:\spikelength) node[anchor=north, yshift=2pt, draw=none] {\scalebox{0.7}{$n_{\theq}$}};
      \stepcounter{q};
  }
  \draw[thick] (root) -- ++(90:\spikelength) node[anchor=south, draw=none] {\scalebox{0.7}{$N$}};
\end{tikzpicture}
            \end{subfigure}
            \caption{\small\emph{Step-by-step decomposition of an $N$-batch of 5-D tensors with the \texttt{BHT-l2r} algorithm~(\Cref{alg:batch_htucker}). The decomposition starts with the leaves of the last layer. Since the dimension tree is constructed beforehand, \Cref{alg:batch_htucker} has the information about which dimensions' leaves will be on which layer through the dimension tree $\tree$. Note that the batch dimension ($N$) remains intact throughout the entire decomposition process.}}
            \label{fig:batch_htucker_steps}
        \end{figure}

    \subsection{Incremental Updates}
        In this section we propose a solution to \Cref{prob:incremental_ht} via a method to update an existing HT representation incrementally when new batches of tensors become available. Assume that at time $k$, we have an HT approximation $\hierarchical_{\mathcal{X}^{k-1}}$ of the accumulation tensor $\mathcal{X}^{k-1}$ in batch-HT format $(\tree, \bm{\core}^{k-1})$. Then, a new batch of $\newdim^{k}$ $d$-dimensional tensors $\mathcal{Y}^{k}\in\reals^{n_{1}\times\cdots n_{d}\times\newdim^{k}}$ arrives, and our task is to update $\bm{\core}^{k-1}$ to $\bm{\core}^{k}$ so as to generate a new approximation $\hierarchical_{\mathcal{X}^{k}}$. In other words, we describe an approach to update each core $\core^{k-1}_{\ell,i_{\ell}}$ into a new core $\core^{k}_{\ell, i_{\ell}}$, assuming an unchanged dimension tree $\tree$. The proposed approach has three components: 1) project onto existing HT cores 2) compute residuals, and 3) update HT cores. The overall algorithm, \ouralgorithm, is provided in \Cref{alg:HIT} and each step is described in detail next.
        \begin{algorithm}[!hbtp]
            \centering
            \resizebox{!}{0.45\textheight}{
            \caption{\ouralgorithm: Incremental updates to a tensor represented as batch hierarchical Tucker}
            \label{alg:HIT}
            \begin{minipage}{1.15\textwidth}
            \begin{algorithmic}[1]
                    \Input
                        \Desc{$\mathcal{Y}^{k}\in\reals^{n_{1}\times\cdots\times n_{d}\times \newdim^{k}}$}{streamed $N^{k}$-batch of tensors at the $k$-th step}
                        \Desc{$\hierarchical_{\mathcal{X}^{k-1}}$}{Hierarchical Tucker representation with dimension tree $\tree$ and cores $\mathcal{G}^{k-1}_{\ell,j}$}
                        \Desc{$\indexset^{k-1}$}{index set from $(k-1)$-th batch}
                        \Desc{$\varepsilon_{rel}$}{desired relative error truncation threshold}
                    \EndInput
                    \Output
                        \Desc{$\hierarchical_{\mathcal{X}^{k}}$}{\textit{Updated} hierarchical Tucker representation with dim. tree $\tree$ and cores $\mathcal{G}^{k}_{\ell,j}$}
                        \Desc{$\indexset^{k}$}{\textit{Updated} index set }
                    \EndOutput
                    \State \textcolor{grey}{\textbf{Project streamed batch and check the representation quality}}
                    \State $\varepsilon_{des}\gets \varepsilon_{rel}\|\mathcal{Y}^{k}\|_{F}$
                    \State $\mathcal{C}\gets\mathcal{Y}^{k}$
                    \For{$\ell=p$ to $1$} \Comment{Project $\mathcal{Y}^{k}$ onto existing cores}\label{alg:line:projection_start}
                        \State $\mathcal{C} \gets \mathcal{C} \underset{\mathcal{I}}{\times} \llbracket \core_{\ell,1},\dots, \core_{\ell,|\tree_{\ell}|} \rrbracket $ \Comment{$\mathcal{I}=\{d_{p,i}; i=1,\dots,|\tree_{p}|\}$ needed just for the $p$-th layer}
                        \State $\mathcal{C}\gets\reshape\left( \mathcal{C},\indexset^{k-1}_{\tree_{\ell-1}} \right)$ \Comment{Reshape $\mathcal{C}$ for the next layer}
                    \EndFor \label{alg:line:projection_end}
                    \If{$\sqrt{\|\mathcal{Y}^{k}\|^{2}_{F}-\|\mathcal{C}\|^{2}_{F}}\leq\varepsilon_{des}$}
                        \State \textcolor{grey}{\textbf{Skip updating all cores \textit{except the root node}}}
                        \State $\mathcal{G}^{k}\gets \mathcal{G}^{k-1}\qquad\forall \mathcal{G}^{k-1}\in {\hierarchical_{\mathcal{X}^{k-1}}}\setminus \mathcal{G}^{k-1}_{0,1}$
                    \Else
                        \State \textcolor{grey}{\textbf{Update cores on the last layer}}
                        \State $\mathcal{C}\gets \mathcal{Y}^{k}$
                        \State $\varepsilon_{nw}\gets \varepsilon_{des}/\sqrt{2d-2}$ \Comment{Compute node-wise truncation error tolerance from $\varepsilon_{des}$}
                        \For{$j=1$ to $|\tree_{p}|$}
                            \State $C\gets \texttt{unfold}\left( \mathcal{C},d_{p,j} \right)$ \Comment{$d_{p,j}\in\{1,\dots,d\}$ is the index of the dimension corresponding to $\node_{p,j}$}
                            \State $R_{p,j}\gets\Pi^{k-1}_{p,j}C$ \Comment{$\Pi^{k-1}_{p,j}$ is computed according to \eqref{eq:compute_residual}}
                            \State $U^{k}_{R}\gets \texttt{SVD}(R_{p,j},\varepsilon_{nw})$ \Comment{Error-truncated \texttt{SVD} on the residual according to \eqref{eq:svd_on_residual}}
                            \State $\core^{k}_{p,j},\indexset^{k}_{\node_{p,j}} \gets \texttt{expandCore}(\node_{p,j},\indexset^{k-1}_{\node_{p,j}},\core^{k-1}_{p,j},U^{k}_{R})$ \Comment{Using \Cref{alg:expandCore,alg:updateIndexSet}}
                            \State $\core^{k-1}_{p-1,m}\gets\texttt{padWithZeros}(\core^{k-1}_{p-1,m},t,r_{R_{p,j}})$ \Comment{Assume $\node_{p,j}$ is the $t$-th successor of $\node_{p-1,m}$}
                        \EndFor
                        \State $\indexset^{k}_{\tree_{p-1}} \gets \texttt{updateIndexSet}(\indexset^{k-1}_{\tree_{p-1}},\tree_{p-1})$ \Comment{Update using \Cref{alg:updateIndexSet} according to \eqref{eq:ht_index_set_construction}}
                        \State $\mathcal{C} \gets \mathcal{C} \underset{\mathcal{I}}{\times} \llbracket \core^{k}_{p,1},\dots,\core^{k}_{p,|\tree_{p}|} \rrbracket $ \Comment{$\mathcal{I}=\{d_{p,i}; i=1,\dots,|\tree_{p}|\}$ as in \eqref{eq:ht_leaf_contraction}}
                        \State \textcolor{grey}{\textbf{Update cores on the remaining layers}}
                        \For{$\ell=p-1$ to $1$}
                            \State $\mathcal{C}\gets \texttt{reshape}\left( \mathcal{C},\indexset^{k}_{\tree_{\ell}} \right)$
                            \For{$j=1$ to $|\tree_{\ell}|$}
                                \State $C\gets \texttt{unfold}\left( \mathcal{C},\ell \right)$ 
                                \State $R_{\ell,j}\gets\Pi^{k-1}_{\ell,j}C$ \Comment{$\Pi^{k-1}_{\ell,j}$ is computed according to \eqref{eq:compute_residual}}
                                \State $U^{k}_{R}\gets \texttt{SVD}(R_{\ell,j},\varepsilon_{nw})$ \Comment{Compute error truncated \texttt{SVD} on the residual according to \eqref{eq:svd_on_residual}}
                                \State $\core^{k}_{\ell,j},\indexset^{k}_{\node_{\ell,j}} \gets \texttt{expandCore}(\node_{\ell,j},\indexset^{k-1}_{\node_{\ell,j}},\core^{k-1}_{\ell,j},U^{k}_{R})$ \Comment{Using \Cref{alg:expandCore,alg:updateIndexSet}}
                                \State $\core^{k-1}_{\ell-1,m}\gets\texttt{padWithZeros}(\core^{k-1}_{\ell-1,m},t,r_{R_{\ell,j}})$ \Comment{Assume $\node_{\ell,j}$ is the $t$-th successor of $\node_{\ell-1,m}$}
                            \EndFor
                            \State $\indexset^{k}_{\tree_{\ell-1}} \gets \texttt{updateIndexSet} (\indexset^{k-1}_{\tree_{\ell-1}},\tree_{\ell-1})$
                            \State $\mathcal{C} \gets \mathcal{C}\times \llbracket \core^{k}_{\ell,1},\dots,\core^{k}_{\ell,|\tree_{\ell}|} \rrbracket$
                        \EndFor
                        \EndIf
                    \State $\mathcal{G}^{k}_{0,1}\gets \mathcal{G}^{k-1}_{0,1} \pad^{3} \mathcal{C}$ 
                    \end{algorithmic}
                    \end{minipage}
                    }
                \end{algorithm}
        \paragraph{\textbf{Step 1:} Projection onto existing HT cores}
            Our proposed approach is centered around the fact that both Tucker leaves and Tucker cores are matrices with orthonormal columns (or under reshapings) when trained with \Cref{alg:batch_htucker}. This structure allows us to compute an approximation of newly streamed data using an existing set of hierarchical cores through a simple projection. This approximation will then guide refinement.

            The projection is done by contracting the incoming batch with existing cores sequentially in leaves to the root direction (\Cref{alg:HIT} lines \ref{alg:line:projection_start}-\ref{alg:line:projection_end}) and results in $\tilde{\mathcal{C}}
            _{1}$ ,the latent representation of $\mathcal{Y}^{k}$ using the cores of $\hierarchical_{\mathcal{X}^{k-1}}$.
            Since the projection onto the cores is simply a series of orthogonal projections, we can compute how well a set of existing HT representation approximates the input tensor by computing the difference in Frobenius norm between the input tensor and the projected tensor\footnote{Please refer to \Cref{thm:reconstruction_norm_equality} for details and proof.}. This error is then used to determine if the cores need to be updated. 
            Then, the error of the projection $\varepsilon_{proj}$ can be directly computed as
            \begin{equation}\label{eq:relative_error_threshold}
                    \varepsilon_{proj}=\sqrt{\|\mathcal{Y}^{k}\|^{2}_{F}-\|\bar{\mathcal{C}}^{k}_{1}\|^{2}_{F}}.
            \end{equation}
            If $\varepsilon_{proj}$ is above the desired threshold $\varepsilon_{abs}$, the cores are updated to reduce the error below $\varepsilon_{abs}$. These updates will start from the last layer (layer $p$) and propagate towards the root of the hierarchical representation (layer $0$) sequentially. 

        \paragraph{\textbf{Step 2:} Computing residuals}
            To identify the missing orthogonal directions in the existing HT representation, we compute the residual $R_{\ell,j}$ for each core $\core^{k-1}_{\ell,j}$ that corresponds to the missing information related to the newest data batch. The residual is computed by projecting the mode-$j$ unfolding of the core $\mathcal{C}^{k}_{\ell}$ onto the orthogonal complement of the existing core $\core^{k-1}_{\ell,j}$ as
                \begin{equation}\label{eq:compute_residual}
                    \resizebox{\textwidth}{!}{$
                    R_{\ell,j} = \Pi^{k-1}_{\ell,j} C^{k}_{\ell,(j)},\quad \text{where} \quad \Pi^{k-1}_{\ell,j} = I - U^{k-1}_{\ell,j}\left(U^{k-1}_{\ell,j}\right)^{T}
                    \text{with}~ U^{k-1}_{\ell,j} = 
                    \begin{cases}
                        \core^{k-1}_{\ell,j} \quad \text{if $\node_{\ell,j}$ is a leaf} \\
                        \texttt{reshape}\left( \core^{k}_{\ell,j}, \left[\alpha,r^{k-1}_{\ell,j}\right] \right) ~ \text{else}
                    \end{cases}, 
                $}
            \end{equation}
            where $\Pi^{k-1}_{\ell,j}$ is the projection operator and $\alpha = \frac{1}{\rank^{\ell}_{j}}\prod_{\gamma\in\indexset^{k-1}_{\node^{\ell}_{j}}}\gamma.$ 
            Note that the projection operator $\Pi^{k-1}_{\ell,j}$ slightly differs for the Tucker leaves and the Tucker cores.  Since $\core_{\ell,j}$ are already 2-dimensional for leaf nodes, there is no reshaping required, while the transfer nodes $\core^{k-1}_{\ell,j}$ must first be reshaped into orthonormal matrices $U^{k-1}_{\ell,j}\in\reals^{\alpha\times r^{k-1}_{\ell,j}}$.
            Once the residual $R_{\ell,j}$ is computed, the next step is to find the directions in which $\core^{k-1}_{\ell,j}$ must be expanded.

            \paragraph{\textbf{Step 3:} Performing core updates}
            The idea behind incremental updates is similar to~\cite{aksoy2024incremental}. It seeks to append directions that span the residual to the existing basis. The core update process starts with an error truncated \texttt{SVD} on the residual $R_{\ell,j}\in\reals^{\alpha\times\beta}$ with $\alpha$ same as Step 2 and $\beta=\frac{1}{\alpha}\prod_{\theta\in\indexset_{\tree_{\ell}}}\theta$
            as
            \begin{equation}\label{eq:svd_on_residual}
                R_{\ell,j}=U^{k}_{R_{\ell,j}}\Sigma^{k}_{R_{\ell,j}}(V^{k}_{R_{\ell,j}})^{T}+E_{R_{\ell,j}},
            \end{equation}
            such that $U^{k}_{R_{\ell,j}}\in\reals^{\alpha\times \rank_{R_{\ell,j}}}$, $\Sigma^{k}_{R_{\ell,j}}\in\reals^{r_{R_{\ell,j}}\times r_{R_{\ell,j}}}$, and $V^{k}_{R_{\ell,j}}\in\reals^{\beta\times \rank_{R_{p,1}}}$. Similar to \texttt{BHT-l2r} algorithm, we distribute the error uniformly over the cores and determine $\rank_{R_{\ell,j}}$ such that the truncation error $\|E_{R_{\ell,j}}\|_{F}$ is at most $\varepsilon_{abs}/\sqrt{2d-2}$. Other approaches to distribute the error over the cores can be considered as well. We include one such alternative method in \Cref{app:alt_error}.

            Since we compute $U^{k}_{R_{\ell,j}}$ from the residual $R_{\ell,j}$, we have  $U^{k}_{R_{\ell,j}}\perp U^{k-1}_{\ell,j}$ by definition. This allows us to expand the orthogonal bases of $U^{k-1}_{\ell,j}$ by simply concatenating with $U^{k}_{R_{\ell,j}}$ as
            \begin{equation}\label{eq:concatenating_orthonormal_vectors}
                U_{\ell,j}^{k}=\mat{U_{\ell,j}^{k-1} & U^{k}_{R_{\ell,j}}},
            \end{equation}
            with $U_{\ell,j}^{k}\in\reals^{\alpha\times r^{k}_{\ell,j}}$, such that $r^{k}_{\ell,j}=r^{k-1}_{\ell,j}+r_{R_{\ell,j}}$. An update of the index set $\indexset^{k}_{\node_{\ell,j}}$ using \eqref{eq:ht_tucker_core_reshape} follows \eqref{eq:concatenating_orthonormal_vectors} to reflect the updated rank $r^{k}_{\ell,j}$. The new basis $U_{\ell,j}^{k}$ can then be appropriately reshaped into transfer or leaf cores in the same manner as for the HT described in Section~\ref{sec:error_truncated_htucker}. The new core $U_{\ell,j}^{k}$ is then reshaped appropriately to form $\core^{k}_{\ell,j}$.
            
            After updating $\core^{k-1}_{\ell,j}$ to $\core^{k}_{\ell,j}$, the new dimensions result in a shape mismatch between $\core^{k}_{\ell,j}$ and its parent Tucker core $\core^{k-1}_{\ell-1,t}$ on layer $(\ell-1)$. If $\core^{k-1}_{\ell-1,t}\in\reals^{r^{k-1}_{\ell,j}\times r^{k-1}_{\ell,m} \times r^{k-1}_{\ell-1,t}}$, then the core is padded according to
            \begin{equation}\label{eq:incremental_zero_padding1}
                \core^{k-1}_{\ell-1,t} = \core^{k-1}_{\ell-1,t}\pad^{1}\mathbf{0}_{r_{R_{\ell,1}}\times r^{k-1}_{\ell,m} \times r^{k-1}_{\ell-1,t}}, \quad \text{or} \quad \core^{k-1}_{\ell-1,t} = \core^{k-1}_{\ell-1,t}\pad^{2}\mathbf{0}_{r^{k-1}_{\ell,j} \times r_{R_{\ell,1}} \times r^{k-1}_{\ell-1,t}},
            \end{equation}
            depending on the order of $\core^{k-1}_{\ell,j}$ in the set of successors of $\core^{k-1}_{\ell-1,t}$.
            Following~\eqref{eq:incremental_zero_padding1}, the index sets of $\node_{\ell-1,t}$ is updated using \eqref{eq:ht_tucker_core_reshape} to reflect the padding.

            Steps 2 and 3 are repeated for all nodes on the $\ell$-th layer (i.e., for $j=1,\dots,|\tree_{\ell}|$). Although we describe the updates to be sequential within a layer, the updates can be computed in parallel to increase the computational speed. 
            Once all $\core_{\ell,j}^{k-1}$ are updated to $\core^{k}_{\ell,j}$ for $j=1,\dots,|\tree_{\ell}|$, the index set of $(\ell-1)$-th layer is updated to $\indexset^{k}_{\tree_{\ell-1}}$ using \eqref{eq:ht_index_set_construction} to reflect the rank updates. 
            
            \paragraph{\textbf{Step 1 (revisited):} Projection onto updated HT cores}
            Following core updates, the algorithm projects $\mathcal{C}^{k}_{\ell}$ onto the \textit{updated} cores on the $\ell$-th layer by
            \begin{equation}\label{eq:incremental_projection_onto_updated_cores}
                \bar{\mathcal{C}}^{k}_{\ell}= \mathcal{C}^{k}_{\ell} \times \llbracket \core^{k}_{\ell,1},\dots,\core^{k}_{\ell,|\tree_{\ell}|} \rrbracket.
            \end{equation}
            Then, the algorithm reshapes $\bar{\mathcal{C}}^{k}_{\ell}$ using the index set $\indexset^{k}_{\ell-1}$ to obtain $\mathcal{C}^{k}_{\ell-1}$. After reshaping, the algorithm returns to Step 2 and repeats the process for all layers up to to $\ell=1$.

            Once we update the cores on the first layer, $\core^{k}_{1,1}$ and $\core^{k}_{1,2}$ are used to project $\mathcal{C}^{k}_{1}$ and obtain $\bar{\mathcal{C}}^{k}_{1}$, the representation of $\mathcal{Y}^{k}$ using the updated Tucker cores.
            Finally, we update $\bar{\core}^{k-1}_{0}$ as
            \begin{equation}\label{eq:incremental_root_concatenate}
                \mathcal{G}^{k}_{0}=\bar{\mathcal{G}}^{k-1}_{0}\pad^{3}\bar{\mathcal{C}}^{k}_{1}, \quad \text{where} \quad \bar{\mathcal{C}}^{k}_{1} = \mathcal{C}^{k}_{1}\times \llbracket \core^{k}_{1,1},\core^{k}_{1,2} \rrbracket,
            \end{equation}
            and therefore conclude updating $\hierarchical_{\mathcal{X}^{k-1}}$ to $\hierarchical_{\mathcal{X}^{k}}$.
            The flow of this presented update scheme is summarized in \Cref{alg:HIT} as \ouralgorithm.

\section{Numerical Experiments}\label{sec:experiments}
 
In this section, we compare the proposed \ouralgorithm~algorithm against an incremental tensor train decomposition algorithm, \texttt{TT-ICE${}^{*}$}, which is proven to demonstrate state-of-the-art compression performance for the tensor train format in~\cite{aksoy2024incremental}.

Comparisons are made on both scientific as well as image based datasets. Scientific datasets include compressible Navier-Stokes simulations from PDEBench dataset~\citep{takamoto2022pdebench}, as well as simulations of a PDE-driven chaotic system of self-oscillating gels~\citep{alben2019semi}. For image based datasets, we will compare the algorithms with Minecraft video frames from MineRL Basalt competition dataset~\citep{milani2024bedd} and multispectral images from the BigEarthNet dataset~\citep{sumbul2019bigearthnet,sumbul2021bigearthnet}. \Cref{tab:datasets} summarizes the datasets used in the experiments. We also include further analyses including the effect of tensor reshapings (\Cref{app:reshaping}) and the effect of different axis reorderings (\Cref{app:axis_ordering}) on the performance of \ouralgorithm~in the appendices.

As a preview, we observe that \texttt{TT-ICE${}^{*}$} to perform better on simpler datasets such as the self-oscillating gel snapshots or at higher relative error tolerances, while \ouralgorithm~performs better on larger datasets with intricate multi-scale features. Furthermore, \ouralgorithm~is able to generalize better to unseen data with fewer batches compared to \texttt{TT-ICE${}^{*}$}. For image datasets, \ouralgorithm~retains more of the qualitative features of the original images compared to \texttt{TT-ICE${}^{*}$} at the same relative error threshold.

    \begin{table}[h!]
        \centering
        \caption{\small\emph{Summary of the datasets used in the experiments. Train units and test units refer to the number of simulations, videos, or images in the training and test sets, respectively. The batch size refers to the number of simulations or images in a single batch. The batch shape refers to the original shape of a single batch before performing any reshaping/resizing operations. The total size refers to the total size of the dataset on disk. Sims, vids, and ims refer to simulations, videos, and images, respectively.}}
        \label{tab:datasets}
        \resizebox{\textwidth}{!}{
        \begin{tabular}{l | c c c c c}
            Name & Train units& Test units & Batch size& Batch shape & Total size \\
            \hline
            PDEBench & 480 sims& 120 sims& 1 sim & $64\times 64\times 64\times 5 \times 21\times 1$ & 66 GB \\
            Self-oscillating gels & 8,000 sims& 15,000 sims & 1 sim & $3367\times 3\times 10\times 1$ & 18 GB \\
            Basalt MineRL & 5449 vids & 17 vids & 20 frames & $360\times 640 \times 3\times20$ & 183 GB \\
            BigEarthNet & 566,712 ims & 23,612 ims & 100 ims & $120\times120\times12\times100$ & 104 GB \\
        \end{tabular}
        }
    \end{table}

    All experiments are executed on University of Michigan compute nodes on the Lighthouse cluster with 16 Intel(R) Xeon(R) Platinum 8468 cores and 64GB of memory. All algorithms presented in this work are implemented in Python using the \texttt{NumPy} library~\citep{harris2020array}. We used the publicly available version of \texttt{TT-ICE${}^{*}$} from \href{https://github.com/dorukaks/TT-ICE}{GitHub} in our experiments.

    \subsection{Performance Metrics}\label{sec:performance_metrics}
        This section presents the performance metrics used to compare the algorithms. The metrics are chosen to evaluate the accuracy of the approximation, the compression ratio, the reduction ratio, the execution time, and the generalization performance.
        
        \subsubsection{Compression ratio (CR) and Reduction ratio (RR)}
            The compression ratio is a measure of how much each approximation compresses the original accumulation. It is defined as
            \begin{equation}\label{eq:compression_ratio}
                CR = \frac{\texttt{num\_elem}(\mathcal{X}^{k})}{\texttt{num\_elem}(\hierarchical_{\mathcal{X}^{k}})},
            \end{equation}
            where $\texttt{num\_elem}(\mathcal{X}^{k})$ denotes the number of elements of the original accumulation and $\texttt{num\_elem}(\hierarchical_{\mathcal{X}^{k}})$ is the number of elements of the approximation, which corresponds to the sum of number of elements in all HT cores.
            A compression ratio of $1$ indicates that the approximation does not compress the original accumulation at all, while a compression ratio below $1$ indicates to an inefficient approximation that uses more elements to approximate the original accumulation.

            As shown in~\cite{chen2024lowrank}, the \texttt{TT-ICE${}^{*}$} algorithm also provides a latent space representation of the data, which can be used in downstream learning tasks. For downstream tasks, the size of the input space plays a key role in computational efficiency. Therefore in this study we also investigate how much reduction is achieved through the incremental tensor decomposition algorithms and call this the \textit{reduction ratio} (RR). The RR is the ratio of the size of a single tensor in the accumulation to the size of the latent space. It is defined as
            \begin{equation}\label{eq:reduction_ratio}
                RR = \frac{\texttt{num\_elem}(\mathcal{Y}^{k,i})}{\texttt{num\_elem}(\texttt{encode}(\mathcal{Y}^{k,i}))},
            \end{equation}
            where $\mathcal{Y}^{k,i}$ denotes a single tensor from the $N^{k}$-batch of tensors, and $\texttt{project}(\mathcal{Y}^{k,i})$ denotes the resulting latent space representation from encoding $\mathcal{Y}^{k,i}$ using the relevant algorithm --- e.g., \texttt{TT-ICE${}^{*}$}, \ouralgorithm, etc..
        \subsubsection{Compression time}
            Compression time is a measure of how long it takes to update the approximation. Note that this does not consider the time it takes to load and preprocess the data.

            In addition to the compression time, we also set a maximum walltime limit for each experiment. The maximum walltime considers the time it takes to load the data onto the memory, preprocess the data (if necessary), update the approximation, and check the approximation error of the updated cores on the test set. If an experiment exceeds the maximum walltime, the experiment is terminated and the metrics at the time of termination are reported.
        \subsubsection{Relative test error}
        When using the compression for downstream learning tasks, having a latent mapping that achieves a similar approximation error for both in- and out-of-sample datasets is valuable.
        The relative test error (RTE) is a measure of how well the approximation generalizes to unseen data. It is measured using the relative approximation error of tensors averaged over the test set. This performance is expected to get better as each approximation method is presented with more data. In the worst case, we expect the RTE to converge to $\varepsilon_{rel}$ asymptotically in the limit of infinite data. Specifically, the RTE is defined as
        \begin{equation}\label{eq:generalization_performance}
            RTE = \frac{1}{N_{\text{test}}}\sum_{i=1}^{N_{\text{test}}} \frac{\|\mathcal{Y}^{i}_{\text{test},k}-\tilde{\mathcal{Y}}^{i}_{\text{test},k}\|_{F}}{\|\mathcal{Y}^{i}_{\text{test},k}\|_{F}},
        \end{equation}
        where $\mathcal{Y}^{i}_{\text{test},k}$ is the $i$-th tensor in the test set and $\tilde{\mathcal{Y}}^{i}_{\text{test},k}$ is the reconstruction of $\mathcal{Y}^{i}_{\text{test},k}$ using the approximation $\hierarchical_{\mathcal{X}^{k}}$. Note that the RTE is averaged over individual tensors in the test set, which can lead to higher RTE values than the target $\varepsilon_{rel}$ especially when the HT-cores are updated with batches of tensors.

        The RTE is recomputed if the approximation is updated with streamed tensor. If the approximation is updated frequently, computing the RTE becomes a dominant part of the computational cost and often becomes the reason for the experiment to exceed the maximum walltime. 
        
    \subsection{Scientific data results}\label{sec:scientific_data}
        This section includes tests of \ouralgorithm's performance to compress simulation outputs of high-dimensional PDEs.

        One of the main motivations for developing \ouralgorithm is to provide a tool for scientists to analyze large-scale scientific data~\citep{aksoy2022inverse}. In this section we compare the performance of \ouralgorithm~with other incremental tensor decomposition algorithms on scientific data. We will use two different scientific datasets: PDEBench~\citep{takamoto2022pdebench} and a dataset comprised of self-oscillating gel simulations~\citep{alben2019semi}.

        Each snapshot of simulation contains states corresponding to various physical quantities (e.g. displacement, velocity, pressure, density etc.). Therefore, we normalize each physical quantity individually similar to~\citet{aksoy2024compressed}. In this work we consider maximum absolute value~($\mathcal{Y}_{\text{maxabs}}$), unit vector~($\mathcal{Y}_{\text{unitvec}}$), and z-score~($\mathcal{Y}_{\text{z-score}}$) normalizations in addition to compressing the unnormalized simulations. The normalizations are computed as follows:
        \begin{equation}
            \begin{aligned}
                \mathcal{Y}_{\text{maxabs}} = \frac{\mathcal{Y}}{\max(|\mathcal{Y}|)}, \qquad \qquad
                \mathcal{Y}_{\text{unitvec}} = \frac{\mathcal{Y}}{\|\mathcal{Y}\|}, \qquad \qquad
                \mathcal{Y}_{\text{z-score}} = \frac{\mathcal{Y}-\mu}{\sigma},
            \end{aligned}
        \end{equation}
        where $\mathcal{Y}$ is the original tensor, and $\mu$, $\sigma$ are the mean and the standard deviation of the entries of $\mathcal{Y}$. 
        Note that we do not learn a set of normalization parameters that encompasses the entire accumulation. Instead, we treat each simulation in the training set individually and normalize their states separately.

        \begin{figure}[htbp]
            \centering
            \begin{subfigure}{0.4\textwidth}
                \includegraphics[width=\columnwidth]{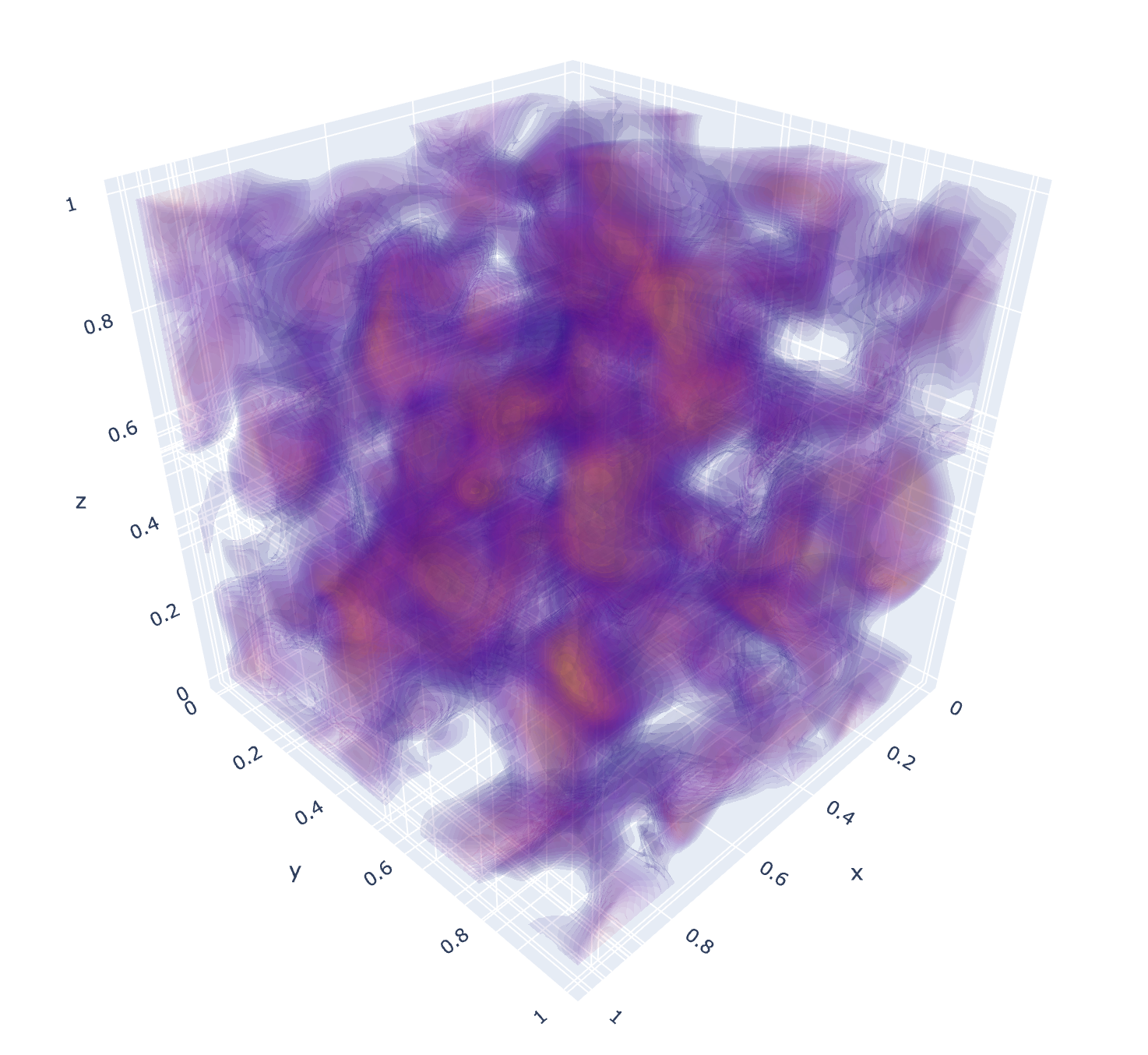}
                \caption{\small\emph{3D compressible Navier-Stokes simulations from PDEBench dataset}}
                \label{fig:example_pdebench}
            \end{subfigure}
            \hspace{0.1\textwidth}
            \begin{subfigure}{0.4\textwidth}
                \includegraphics[width=\columnwidth]{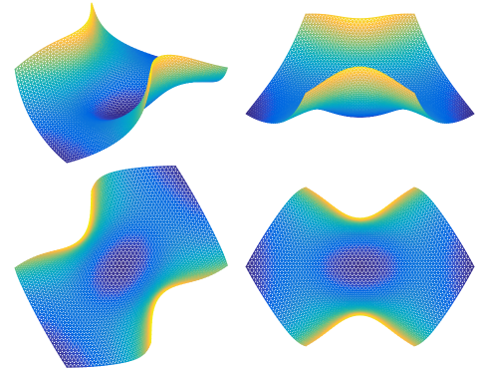}
                \caption{\small\emph{Snapshots from self-oscillating gel simulations}}
                \label{fig:example_gels}
            \end{subfigure}
            \caption{\small\emph{Example snapshots from scientific datasets.}}
            \label{fig:scientific_datasets}
        \end{figure}
    
        \subsubsection{Self-oscillating gel simulations}\label{sec:gel_experiments}
        The first scientific dataset arises from solutions of a parametric PDE that simulates the motion of a hexagonal sheet of self-oscillating gels. This dataset is used both in donwstream learning tasks such as inverse design~\citep{aksoy2022inverse} as well as in experiments of incremental tensor decompositions~\citep{aksoy2024incremental} as a benchmark dataset.

        The motion of the gel is governed by the following time-dependent parametric PDE~\citep{alben2019semi}
        \begin{align}\label{eq:catgelparametricpde}
            \mu\frac{\partial r}{\partial t}=f_{s}\left(r,\mathbf{K_{s}},\eta\right)+f_{B}\left(r\right), && \eta\left(x,y,t,\mathbf{A},\mathbf{k}\right)=1+\mathbf{A}\sin\left(2\pi\left(\mathbf{k}\sqrt{x^{2}+y^{2}}-t\right)\right),
        \end{align}
        where the bold terms indicate the input parameters to the forward model that define the characteristics of the excitation as well as the mechanical properties of the gel. More specifically, $\mathbf{K_s}$ denotes the stretching stiffness of the sheet, $\mathbf{k}$ determines the wavenumber of the sinusoidal excitation, and $\mathbf{A}$ determines the amplitude of the wave traveling on the sheet. Other terms governing the overdamped sheet dynamics are: internal damping coefficient $\mu$, material coordinates $r=(x,y,z)$, stretching force $f_s$, bending force $f_b$, rest strain $\eta$, and time $t$.
        
        The simulations are chaotic, but we use 10 sequential timesteps from each simulation as our data. Specifically, we seek to compress the $x,y$, and $z$ coordinates of 3367 mesh nodes on a hexagonal gel sheet for 10 time snapshots as shown in Figure~\ref{fig:example_gels}.
        To summarize, the data consists of $3367\times3\times10$ tensors for \textit{each} parameter combination that contain the coordinate information of the mesh. We refer to those output tensors as \textit{simulations} for brevity and treat them as individual incremental units. To increase the dimensionality of the data, we reshape the 3-dimensional original tensor into $7\times13\times37\times3\times10$ and accumulate as batches of single simulation trajectories along an auxiliary 6-th dimension.
        
        For this dataset, we have separate training and test sets. To construct the training set, we uniformly discretize the 3-dimensional parameter space into 20 values along each dimension and use their cross product to obtain $20\times20\times20=8000$ unique parameter combinations and then simulate each of those parameter combinations using the approach in~\cite{alben2019semi}. For the test set, randomly sample from the parameter space to obtain 15,000 unique parameter combinations and simulate each of those parameter combinations. We use the training set to update the approximation and the test set to evaluate the generalization performance. Since the selection of the training set is not random, we execute each experiment only once and report the performance. We set the maximum wall time for this experiment to 2 days.
        
        We repeat the experiments for two relative error tolerances: $\varepsilon_{rel}=0.10$, and $\varepsilon_{rel}=0.01$. \Cref{fig:catgel_compression_reduction_simple} shows the best results for compression ratio and reduction ratio from the selected normalization methods for both $\varepsilon_{rel}$ settings. \Cref{fig:catgel_time_validation_simple} shows the best results for compression time and test error for both $\varepsilon_{rel}$ settings. More detailed results including different normalization methods are presented in \Cref{tab:catgel_results} below and \Cref{fig:catgel_compression_reduction_010,fig:catgel_compression_reduction_001,fig:catgel_time_validation_010,fig:catgel_time_validation_001} in \Cref{app:additional_results}.

        \begin{figure}[htbp]
            \centering
            \includegraphics[width=\textwidth]{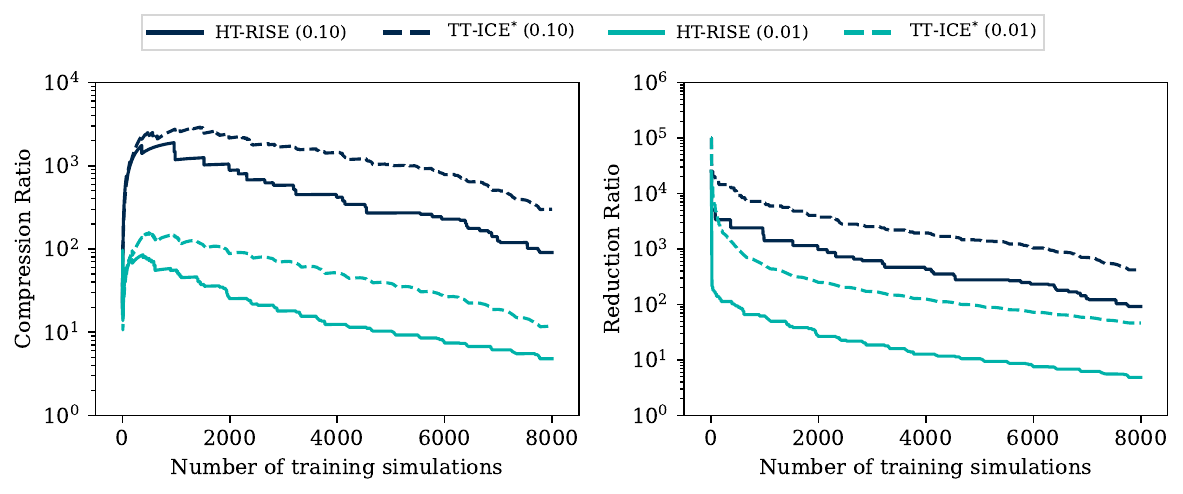}
            \caption{\small\emph{Compression ratio~(CR - left) and reduction ratio~(RR - right) of the algorithms on the self-oscillating gel dataset. \texttt{TT-ICE${}^{*}$} offers $2.5-3.3\times$ the CR and $4.6-9.5\times$ the RR of \ouralgorithm. 
            For more detailed comparisons, please refer to \Cref{fig:catgel_compression_reduction_010} for experiments with $\varepsilon_{rel}=0.10$ and \Cref{fig:catgel_compression_reduction_001} for experiments with $\varepsilon_{rel}=0.01$.}}
            \label{fig:catgel_compression_reduction_simple}
        \end{figure}

        \Cref{fig:catgel_compression_reduction_simple} shows the results of experiments without using any normalization. \ouralgorithm~at $\varepsilon_{rel}=0.10$ achieves a CR of $90.70$ and RR of $91.82$ whereas \texttt{TT-ICE${}^{*}$} achieves a CR of $300.4$ and RR of $420.9$. Similar to $\varepsilon_{rel}=0.10$, \ouralgorithm~at $\varepsilon_{rel}=0.01$ achieves a lower CR ($4.79\times$) compared to \texttt{TT-ICE${}^{*}$} ($11.86\times$) and a lower RR ($4.85\times$) compared to \texttt{TT-ICE${}^{*}$} ($46.27\times$). This can be explained by the relatively small size and the simpler nature of the self-oscillating gel dataset. This allows \texttt{TT-ICE${}^{*}$} to find and exploit a low-rank structure across the tensors in the accumulation. This claim is also supported by the reduction ratio of \texttt{TT-ICE${}^{*}$}, which corresponds to a latent space size of 2183 (recall that the training set has 8000 simulations).

        \begin{figure}[htbp]
            \centering
            \includegraphics[width=\textwidth]{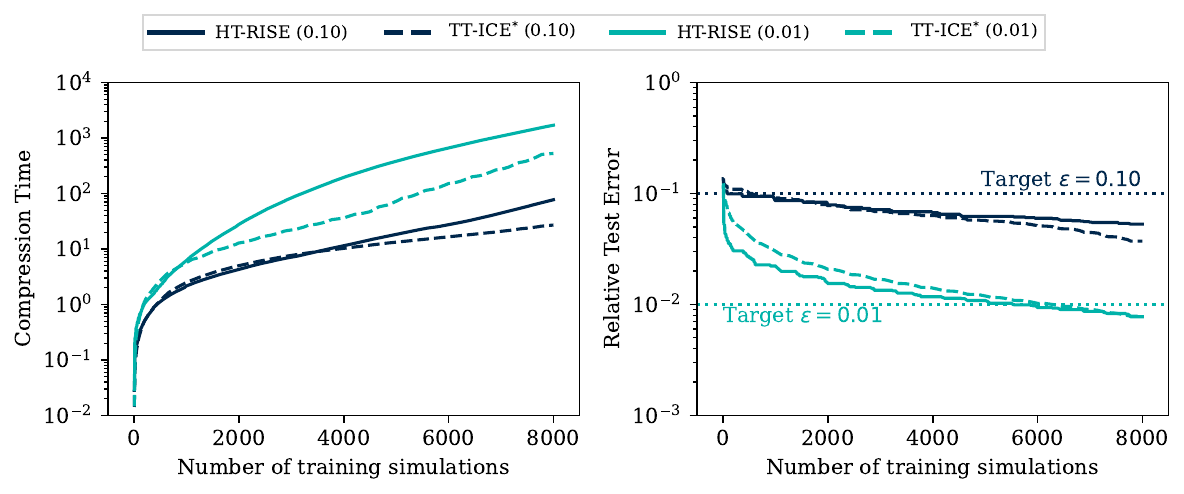}
            \caption{\small\emph{Compression time~(left) and Relative Test Error~(right) of the algorithms on the self-oscillating gel dataset. \ouralgorithm~takes $2.8-3.2\times$ the time it takes \texttt{TT-ICE${}^{*}$} to complete compressing the stream. Neither method struggles to reduce the RTE below the target $\varepsilon$ for both $\varepsilon_{rel}=0.10$ and $0.01$.
            For more detailed comparisons, please refer to \Cref{fig:catgel_time_validation_010} for experiments with $\varepsilon_{rel}=0.10$ and \Cref{fig:catgel_time_validation_001} for experiments with $\varepsilon_{rel}=0.01$.}}
            \label{fig:catgel_time_validation_simple}
        \end{figure}

        \Cref{fig:catgel_time_validation_simple} shows the results of experiments on compression time and relative test error without using any normalization. \ouralgorithm~at $\varepsilon_{rel}=0.10$ takes $77.16$ seconds to compress all 8000 simulations and achieves a RTE of $0.052$. On the other hand, \texttt{TT-ICE${}^{*}$} takes $26.99$ seconds to compress all 8000 simulations and achieves a RTE of $0.037$. \ouralgorithm~at $\varepsilon_{rel}=0.01$ takes $1699.8$ seconds to compress all 8000 simulations and achieves a RTE of $0.007$. On the other hand, \texttt{TT-ICE${}^{*}$} takes $526.9$ seconds to compress all 8000 simulations and achieves a RTE of $0.007$. \ouralgorithm~takes $2.8-3.2\times$ the time it takes \texttt{TT-ICE${}^{*}$} to complete compressing the stream.
        
        Neither method struggles to reduce the RTE below the target $\varepsilon$ for both $\varepsilon_{rel}=0.10$ and $0.01$. This is expected as the self-oscillating gel dataset is relatively simple and has a low-dimensional structure that can be captured by both methods. This is evidenced by both algorithms driving the RTE well below the target $\varepsilon$. The hierarchy of dimensions introduced by the HT format does not provide a significant advantage in terms of approximation performance but becomes a hindrance in terms of computational efficiency. This is reflected as the difference in compression time between \ouralgorithm~and \texttt{TT-ICE${}^{*}$}.

        \Cref{tab:catgel_results} summarizes the rest of our experiments with the self-oscillating gel dataset. The table shows the total time taken to compress the entire dataset, the compression ratio, the reduction ratio, and the mean relative approximation error over the test set. The table also shows the normalization method used for each experiment. The experiments are repeated for two relative error tolerances: $\varepsilon_{rel}=0.10$ and $\varepsilon_{rel}=0.01$. The results further support our claim that \ouralgorithm~is overcomplicated for this dataset and therefore \texttt{TT-ICE${}^{*}$} performs better in terms of our metrics. \texttt{TT-ICE${}^{*}$} consistently outperforms \ouralgorithm~in terms of compression ratio, compression time, and relative test error, except for the experiments at $\varepsilon_{rel}=0.01$ using unit vector and z-score normalizations. At those experiments \texttt{TT-ICE${}^{*}$} runs into maximum walltime issues. 
        
        \begin{table}[h!]
            \centering
            \caption{\small\emph{Summary of the compression experiments with the self-oscillating gel dataset. Norm: method of normalization, Algorithm: the incremental tensor decomposition algorithm, \#Sims: number of simulations compressed, Comp. Time: total time in seconds, CR: compression ratio, RR: reduction ratio, RTE: mean relative test error over the test set. $\ddagger$ indicates that the experiment did not complete due to a timeout, $\dagger$ indicates that the experiment did not complete due to running out of memory.}}
            \label{tab:catgel_results}
            \begin{tabular}{l|c c c c c c c c c c}
                $\varepsilon_{rel}$ & Norm & Algorithm & \#Sims & Comp. Time (s) & CR & RR & RTE \\
                \hline
                \multirow{6}{*}{0.10}& \multirow{2}{*}{None} & HT-RISE & 8000 & 77.16 & 90.70 & 91.82 & 0.052 \\
                &  & TT-ICE${}^{*}$ & 8000 & 26.99 & 300.4 & 420.9 & 0.037 \\
                \cline{2-8}
                & \multirow{2}{*}{UnitVec} & HT-RISE & 8000 & 203.2 & 35.45 & 36.16 & 0.059 \\
                &  & TT-ICE${}^{*}$ & 8000 & 39.29 & 124.9 & 220.1 & 0.059 \\
                \cline{2-8}
                & \multirow{2}{*}{Z-score} & HT-RISE & 8000 & 226.9 & 33.67 & 34.35 & 0.066 \\
                &  & TT-ICE${}^{*}$ & 8000 & 41.83 & 117.6 & 215.4 & 0.058 \\
                \hline
                \hline
                \multirow{6}{*}{0.01}& \multirow{2}{*}{None} & HT-RISE & 8000 & 1699.8 & 4.79 & 4.85 & 0.007 \\
                &  & TT-ICE${}^{*}$ & 8000 & 526.9 & 11.86 & 46.27 & 0.007 \\
                \cline{2-8}
                & \multirow{2}{*}{UnitVec} & HT-RISE & 8000 & 1929.2 & 4.11 & 4.17 & 0.012 \\
                &  & TT-ICE${}^{*}$ & 5100\wt & 119.4 & 27.15 & 70.34 & 0.020 \\
                \cline{2-8}
                & \multirow{2}{*}{Z-score} & HT-RISE & 8000 & 3029.1 & 3.03 & 3.08 & 0.098 \\
                &  & TT-ICE${}^{*}$ & 5086\wt & 177.1 & 16.69 & 67.25 & 0.022 \\
            \end{tabular}
        \end{table}

    \subsubsection{PDEBench 3D Navier-Stokes simulations}\label{sec:pdebench_experiments}

        Next we compare the performance of \ouralgorithm with other incremental tensor decomposition algorithms on a more challenging scientific dataset.
        PDEBench dataset~\citep{takamoto2022pdebench} is a benchmark suite for scientific machine learning tasks. It provides diverse datasets with distinct properties based on 11 well-known time-dependent and time-independent PDEs. 

        From the PDEBench dataset, we use the 3D compressible Navier-Stokes simulations with $M=1.0$ and turbulent initial conditions. The data consists of 3 velocity ($v_{x}, v_{y}, v_{z}$), pressure, and density fields at 21 time steps for 600 different initial conditions. The simulation space is dicretized into a $64\times64\times64$ grid. The compressible Navier stokes equations that are used to generate the dataset are:
        \begin{equation}\label{eq:compressible_cfd}
            \begin{aligned}
                \partial_{t}\rho + \nabla\cdot(\rho\mathbf{v}) &= 0,\\
                \rho\left(\partial_{t}\mathbf{v} + \mathbf{v}\cdot\nabla\mathbf{v}\right) &= -\nabla p + \eta\Delta\mathbf{v} +\left(\zeta+\frac{\eta}{3}\right)\nabla(\nabla\cdot\mathbf{v}),\\
                \partial_{t}(\epsilon+\rho\nu^{2}) + \nabla\cdot\left[(p+\epsilon+\frac{\rho\nu^{2}}{2})\mathbf{v} - \mathbf{v}\cdot\sigma^{\prime}\right] &= \mathbf{0},
            \end{aligned}
        \end{equation}
        where $\rho$ is the mass density, $\mathbf{v}$ is the fluid velocity, $p$ is the gas pressure, $\epsilon$ is the internal energy, $\sigma^{\prime}$ is the viscous stress tensor, and $\eta$ and $\zeta$ are shear and bulk viscosities, respectively. Each simulation from the dataset creates a 5-dimensional tensor of size $64\times64\times64\times5\times21$. This results in simulations that are $\sim270\times$ larger than the self-oscillating gel simulations.
        
        To increase the dimensionality of the dataset, we reshape each simulation into a $8$-dimensional tensor of shape $8\times 8 \times8 \times 8 \times 8\times 8 \times 5 \times 21$ and accumulate in batches of single simulation trajectory along an auxiliary $9$-th dimension.

        We randomly split 600 simulations into training and test sets with a ratio of $80\%$ and $20\%$, respectively. We use the training set to update the approximation and the test set to evaluate the generalization performance. Since the selection of the training set is random, we repeat each experiment with five different seeds and report the average performance. We set the maximum wall time for each experiment to four days.
        We repeat the experiments for two relative error tolerances: $\varepsilon_{rel}=0.10$, and $\varepsilon_{rel}=0.05$. \Cref{fig:pdebench_compression_reduction_simple} shows the best results for compression ratio and reduction ratio from the selected normalization methods for both $\varepsilon_{rel}$ settings. \Cref{fig:pdebench_time_validation_simple} shows the best results for compression time and test error for both $\varepsilon_{rel}$ settings. More detailed results are presented in \Cref{tab:pdebench_results} below and \Cref{fig:pdebench_compression_reduction_005,fig:pdebench_compression_reduction_010,fig:pdebench_time_validation_005,fig:pdebench_time_validation_010} in \Cref{app:additional_results}.

        \begin{figure}[h!]
            \centering
            \includegraphics[width=\textwidth]{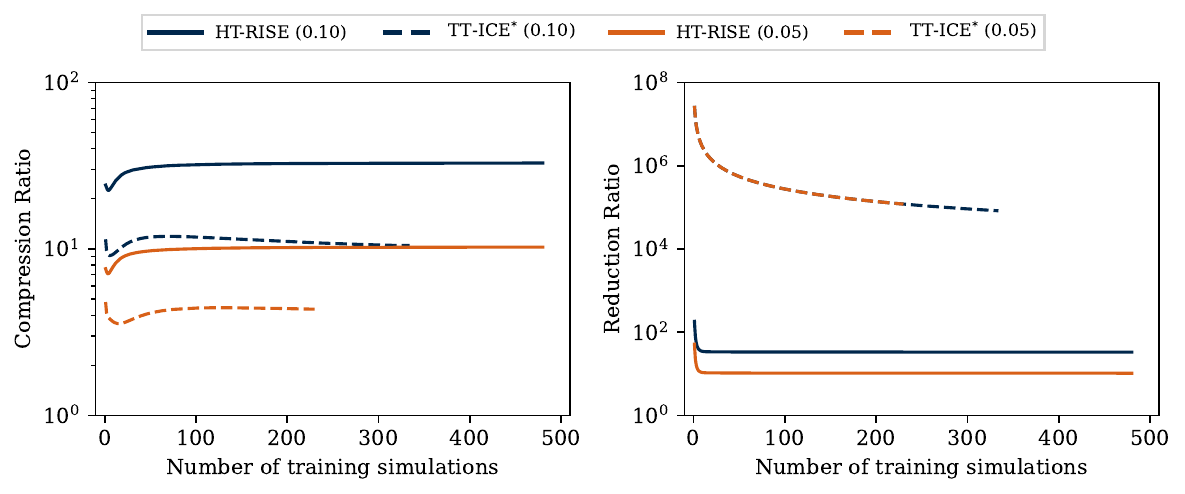}
            \caption{\small\emph{Compression ratio~(CR - left) and reduction ratio~(RR- - right) of the algorithms on the PDEBench 3D turbulent Navier-Stokes dataset. \ouralgorithm~offers $2.4-3.1\times$ the CR of \texttt{TT-ICE${}^{*}$} but results in orders of magnitude lower RR. \texttt{TT-ICE${}^{*}$} does not complete the entire stream due to maximum walltime timeout whereas \ouralgorithm~successfully completes the task. The results are averaged over 5 seeds. For more detailed comparisons, please refer to \Cref{fig:pdebench_compression_reduction_010} for experiments with $\varepsilon_{rel}=0.10$ and \Cref{fig:pdebench_compression_reduction_005} for experiments with $\varepsilon_{rel}=0.05$.}}
            \label{fig:pdebench_compression_reduction_simple}
        \end{figure}

        \Cref{fig:pdebench_compression_reduction_simple} shows the results of experiments without using any normalization. \ouralgorithm~at $\varepsilon_{rel}=0.10$ achieves a CR of $32.83$ and RR of $33.09$ whereas \texttt{TT-ICE${}^{*}$} achieves a CR of $10.49$ and RR of $82,658$. Note that \texttt{TT-ICE${}^{*}$} runs into maximum walltime issues at this experiment and is only able to compress 333 simulations out of 480.
        Similar to $\varepsilon_{rel}=0.10$, \ouralgorithm~at $\varepsilon_{rel}=0.05$ achieves a higher CR ($10.2\times$) compared to \texttt{TT-ICE${}^{*}$} ($4.34\times$) but a lower RR ($10.34\times$) compared to \texttt{TT-ICE${}^{*}$} ($119,674\times$). This time \texttt{TT-ICE${}^{*}$} runs into maximum walltime issues and is only able to compress 229 simulations out of 480.
        The orders of magnitude discrepancy between the RR of \texttt{TT-ICE${}^{*}$} and \ouralgorithm~is caused by the fact that the size of the latent space representation is upper bounded by the number of tensors in the accumulation for \texttt{TT-ICE${}^{*}$}. As \ouralgorithm~does not have such a limit, the latent space grows in parallel to the complexity of the streamed data. \texttt{TT-ICE${}^{*}$} hitting the latent space upper bound further indicates that \texttt{TT-ICE${}^{*}$} is not able to find a low-rank structure \textit{across} the tensors in the accumulation in contrast to self-oscillating gels dataset.

        \begin{figure}[h!]
            \centering
            \includegraphics[width=\textwidth]{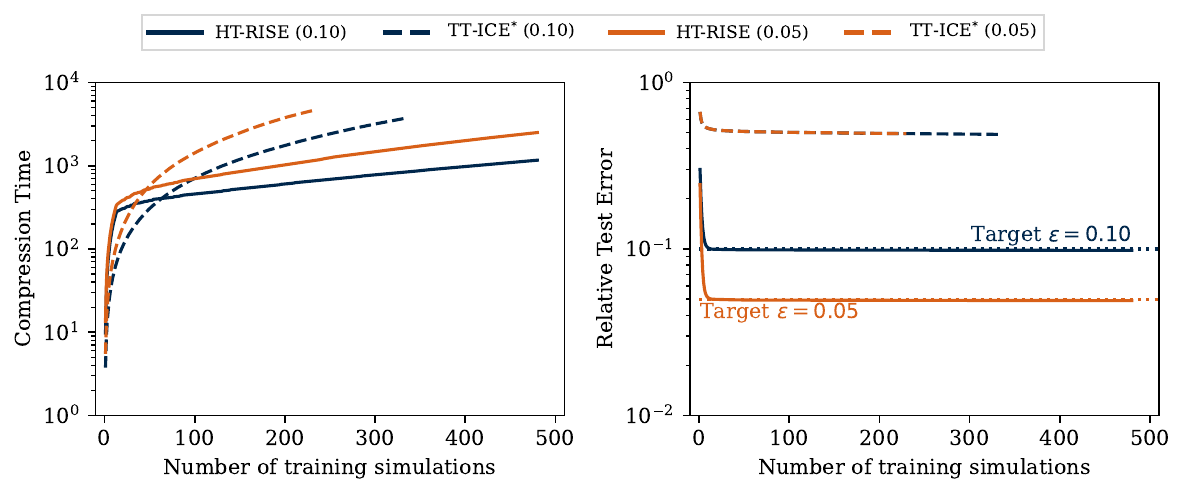}
            \caption{\small\emph{Compression time~(left) and Relative Test Error~(right) of the algorithms on the PDEBench 3D turbulent Navier-Stokes dataset. Until \texttt{TT-ICE${}^{*}$} hits the maximum walltime limit, \texttt{TT-ICE${}^{*}$} takes $1.8-3.2\times$ of the time it takes \ouralgorithm~to complete compressing the stream. In addition, \texttt{TT-ICE${}^{*}$} struggles to reduce the RTE below the target $\varepsilon$ levels in both cases whereas \ouralgorithm~reduces the RTE below the $\varepsilon$ within 15 simulations. Recall that \texttt{TT-ICE${}^{*}$} does not complete the entire stream due to maximum walltime timeout whereas \ouralgorithm~successfully completes the task. The results are averaged over 5 seeds. For more detailed comparisons, please refer to \Cref{fig:pdebench_time_validation_010} for experiments with $\varepsilon_{rel}=0.10$ and \Cref{fig:pdebench_time_validation_005} for experiments with $\varepsilon_{rel}=0.05$.}}
            \label{fig:pdebench_time_validation_simple}
        \end{figure}

        \Cref{fig:pdebench_time_validation_simple} shows the results of experiments on compression time and relative test error without using any normalization. \ouralgorithm~at $\varepsilon_{rel}=0.10$ takes $1165.90$ seconds to compress all 480 simulations and achieves a RTE of $0.098$. On the other hand, \texttt{TT-ICE${}^{*}$} takes $3727.38$ seconds to compress 333 simulations and achieves a RTE of $0.487$. \ouralgorithm~at $\varepsilon_{rel}=0.05$ takes $2505.58$ seconds to compress all 480 simulations and achieves a RTE of $0.049$. On the other hand, \texttt{TT-ICE${}^{*}$} takes $4581.78$ seconds to compress 229 simulations and achieves a relative test error of $0.493$. \texttt{TT-ICE${}^{*}$} struggles to reduce the RTE below the target $\varepsilon$ levels in both cases whereas \ouralgorithm~reduces the relative test error below the $\varepsilon$ within the first 15 simulations. 
        This observation further supports the claim that \texttt{TT-ICE${}^{*}$} is not able to find a generalized low-rank structure across the tensors in the accumulation.
        As a result of its hierarchical structure, \ouralgorithm~discovers a low-rank structure across the tensors in the accumulation and is able to generalize well to unseen data.
    As a result, \ouralgorithm~requires much fewer updates to the approximation to achieve the target $\varepsilon$ than \texttt{TT-ICE${}^{*}$}. This is also reflected in the compression time as both \ouralgorithm~curves taper off as soon as the RTE falls below their respective $\varepsilon$, whereas curves for \texttt{TT-ICE${}^{*}$} continue to increase steadily. Frequent updates of \texttt{TT-ICE${}^{*}$} is also the reason why \texttt{TT-ICE${}^{*}$} runs into maximum walltime issues, as each update is succeeded with RTE computation over the \textit{entire} test set.

        \Cref{tab:pdebench_results} summarizes the rest of our experiments with the PDEBench 3D turbulent Navier-Stokes dataset. The table shows the total time taken to compress the entire dataset, the compression ratio, the reduction ratio, and the RTE. The table also shows the normalization method used for each experiment. The experiments are repeated for two relative error tolerances: $\varepsilon_{rel}=0.10$ and $\varepsilon_{rel}=0.05$. The results show that \ouralgorithm~consistently outperforms \texttt{TT-ICE${}^{*}$} in terms of compression ratio, compression time, and RTE. \ouralgorithm~also completes the task within the maximum walltime limit (4 days) in all experiments (except for Z-score normalization at $\varepsilon_{rel}=0.05$, where it runs into memory issues), whereas \texttt{TT-ICE${}^{*}$} runs into maximum walltime issues in all experiments.

        \begin{table}[h!]
            \small
            \centering
            \caption{\small\emph{Summary of the compression experiments with the PDEBench 3D turbulent Navier-Stokes dataset. The results are averaged over 5 seeds. Norm: method of normalization, Algorithm: the incremental tensor decomposition algorithm, \#Sims: number of simulations compressed, Comp. Time: total time in seconds, CR: compression ratio, RR: reduction ratio, RTE: mean relative test error over the test set.
            Please refer to \Cref{fig:pdebench_compression_reduction_010,fig:pdebench_time_validation_010} for experiments with $\varepsilon_{rel}=0.10$ and \Cref{fig:pdebench_compression_reduction_005,fig:pdebench_time_validation_005} for experiments with $\varepsilon_{rel}=0.05$. MaxAbs: Maximum absolute value normalization, None: No normalization, UnitVec: Unit vector normalization, Z-score: Z-score normalization. $\ddagger$ indicates that the experiment did not complete due to a timeout, $\dagger$ indicates that the experiment did not complete due to running out of memory.}}
            \label{tab:pdebench_results}
            \begin{tabular}{l|c c c c c c c c c c}
                $\varepsilon_{rel}$ & Norm & Algorithm & \#Sims & Comp. Time (s) & CR & RR & RTE \\
                \hline
                \multirow{8}{*}{0.10}& \multirow{2}{*}{MaxAbs} & HT-RISE & 480 & 1706.84 & 19.73 & 19.89 & 0.087 \\
                &  & TT-ICE${}^{*}$ & 292\wt & 3824.33 & 7.94 & 94,264 & 0.609 \\
                \cline{2-8}
                & \multirow{2}{*}{None} & HT-RISE & 480 & 1165.90 & 32.83 & 33.09 & 0.098 \\
                &  & TT-ICE${}^{*}$ & 333\wt & 3727.38 & 10.49 & 82,658 & 0.487 \\
                \cline{2-8}
                & \multirow{2}{*}{UnitVec} & HT-RISE & 480 & 1863.73 & 16.80 & 16.92 & 0.096 \\
                &  & TT-ICE${}^{*}$ & 267\wt & 3989.26 & 6.72 & 102,705 & 0.762 \\
                \cline{2-8}
                & \multirow{2}{*}{Z-score} & HT-RISE & 480 & 3007.92 & 9.08 & 9.20 & 0.097 \\
                &  & TT-ICE${}^{*}$ & 213\wt & 4287.14 & 3.97 & 128,622 & 0.962 \\
                \hline
                \hline
                \multirow{8}{*}{0.05}& \multirow{2}{*}{MaxAbs} & HT-RISE & 480 & 3594.18 & 7.08 & 7.18 & 0.044 \\
                &  & TT-ICE${}^{*}$ & 194\wt & 4462.31 & 3.53 & 141,154 & 0.617 \\
                \cline{2-8}
                & \multirow{2}{*}{None} & HT-RISE & 480 & 2505.58 & 10.26 & 10.34 & 0.049 \\
                &  & TT-ICE${}^{*}$ & 229\wt & 4581,78 & 4.34 & 119,674 & 0.493 \\
                \cline{2-8}
                & \multirow{2}{*}{UnitVec} & HT-RISE & 480 & 3911.02 & 6.26 & 6.30 & 0.048 \\
                &  & TT-ICE${}^{*}$ & 177\wt & 4503.89 & 2.93 & 154,635 & 0.771 \\
                \cline{2-8}
                & \multirow{2}{*}{Z-score} & HT-RISE & 477\oom & 7122.89 & 3.18 & 3.19 & 0.048 \\
                &  & TT-ICE${}^{*}$ & 144\wt & 4877.19 & 1.82 & 189,828 & 0.970 \\
            \end{tabular}
        \end{table}

      \paragraph{\textbf{Summmary of conclusions from experiments with scientific data:}}
        The experiments with scientific data demonstrate that \ouralgorithm~provides superior generalization performance compared to \texttt{TT-ICE${}^{*}$} across both datasets. \ouralgorithm~consistently reduces the RTE below the target $\varepsilon$ levels within fewer simulations. \texttt{TT-ICE${}^{*}$} results in significantly higher RR due to the properties of the TT-format across both datasets but falls short in CR when the dataset becomes large. The importance of normalization for scientific data is also demonstrated in \Cref{tab:catgel_results,tab:pdebench_results}, showing that it can significantly affect the performance of the compression algorithms. Overall, the results suggest that \ouralgorithm~is more suitable for scientific datasets with complex multi-scale structures, while \texttt{TT-ICE${}^{*}$} may be more appropriate for simpler datasets where the low-rank structure can be effectively captured. 
        
    \subsection{Image data}\label{sec:image_data}
        In addition to the scientific data, we will also compare the algorithms on image data. The image data will be obtained from two different sources: The MineRL Basalt competition dataset~\citep{milani2024bedd} and multispectral images from the BigEarthNet dataset~\citep{sumbul2019bigearthnet,sumbul2021bigearthnet}. 
        \begin{figure}[htbp]
            \centering
            \begin{subfigure}{0.41\textwidth}\centering
                \includegraphics[width=0.88\columnwidth]{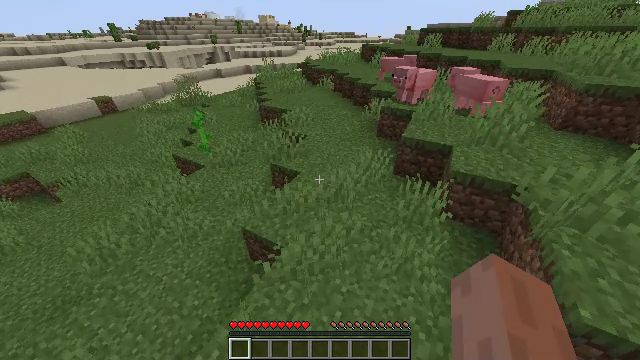}
                \includegraphics[width=0.88\columnwidth]{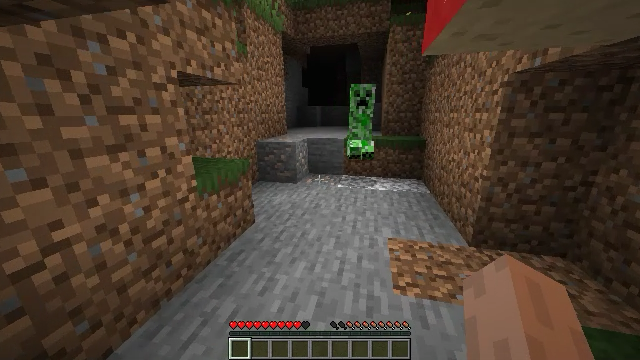}
                \caption{\small\emph{MineRL Basalt competition dataset}}
                \label{fig:example_minecraft}
            \end{subfigure}
            \begin{subfigure}{0.42\textwidth}
                    \includegraphics[width=0.32\columnwidth]{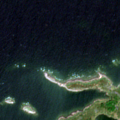}
                    \includegraphics[width=0.32\columnwidth]{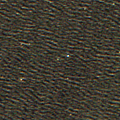}
                    \includegraphics[width=0.32\columnwidth]{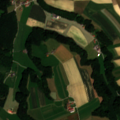}
                    \includegraphics[width=0.32\columnwidth]{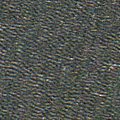}
                    \includegraphics[width=0.32\columnwidth]{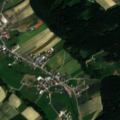}
                    \includegraphics[width=0.32\columnwidth]{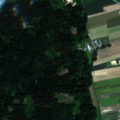}
                    \includegraphics[width=0.32\columnwidth]{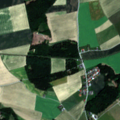}
                    \includegraphics[width=0.32\columnwidth]{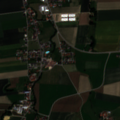}
                    \includegraphics[width=0.32\columnwidth]{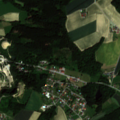}
                \caption{\small\emph{Sentinel2 images~(RGB channels only)}}
                \label{fig:example_multispectral}
            \end{subfigure}
            \caption{\small\emph{Example frames from image-based datasets.}}
            \label{fig:image_datasets}
        \end{figure}

    \subsubsection{Minecraft frames}\label{sec:minecraft_frames}
    First person games are of interest for inverse reinforcement learning purposes. The MineRL Basalt competition dataset~\citep{milani2024bedd} contains gameplay episodes from the game Minecraft and is designed for the purpose of training agents to perform tasks in the game. The dataset contains gameplay episodes from 4 different tasks, which are designed to be challenging for reinforcement learning agents. 
        Each episode is an end-to-end gameplay video that starts at the corresponding initial setting for each scenario and ends with successful completion of the assignment. Therefore, each episode varies in duration. The original videos have a resolution of $640\times360$ pixels and a frame rate of 20 frames per second.
        
        The MineRL dataset contains demonstrations for the following 4 tasks:
        \begin{itemize}
            \item \textbf{Find Cave:} Look around for a cave. When you are inside one, press ESCAPE to end the minigame.
            \item \textbf{Make Waterfall:} After spawning in a mountainous area with a water bucket and various tools, the agent builds a waterfall and then repositions themself to “take a scenic picture” of the same waterfall.
            \item \textbf{Build Animal Pen:} After spawning in a village, the agent builds an animal pen next to one of the houses in a village using fence posts from inventory to build one animal pen that contains at least two of the same animal.
            \item \textbf{Build Village House:} Taking advantage of the items in the inventory, the agent builds a new house in the style of the village (random biome), in an appropriate location (e.g. next to the path through the village), without harming the village in the process. Then gives a brief tour of the house (i.e. spin around slowly such that all of the walls and the roof are visible).
        \end{itemize}
        The dataset contains approximately 20,000 gameplay episodes in total. This translates to approximately 160 GB of gameplay epsiodes \textit{per task}. In this study we only focus on the \textit{Find Cave} task. The \textit{Find Cave} task has 5466 gameplay episodes, totaling a size of 183GB on disk. Similar to~\cite{baker2022video}, we downsample the $640\times360$ frames to $128\times128$. This results in a 4-dimensional dataset $128\times128\times3\times N_{frames}$, where $N_{frames}$ is the number of frames in a video. Furthermore, we increase the dimensionality of the dataset by reshaping the original frames into 9-dimensional tensor of size $2\times 4\times 4\times 4\times 8\times 8 \times 2 \times 3 \times N_{frames}$ and accumulate them as batches of 20 along the 9-th dimension, i.e. $N_{frames}=20$.
        
        We select 17 videos from the dataset as test set at random and use the rest for training. Since the size of the dataset exceeds the allocated memory for each experiment by almost a factor of three, we do not expect the algorithms to compress all 5466 videos. This experiment aims to push the algorithms to their limits given limited computational and hardware resources. We repeat the experiments for three relative error tolerances: $\varepsilon_{rel}=0.30$, $\varepsilon_{rel}=0.20$, and $\varepsilon_{rel}=0.10$. The maximum wall time for this experiment is set to two days. Since the selection of the test set is random, we execute each experiment five times and report the average performance. The results are summarized in \Cref{tab:minecraft_results} and are presented in detail in \Cref{fig:minerl_compression_reduction,fig:minerl_time_validation}.

        \begin{figure}[htbp]
            \centering
            \includegraphics[width=\textwidth]{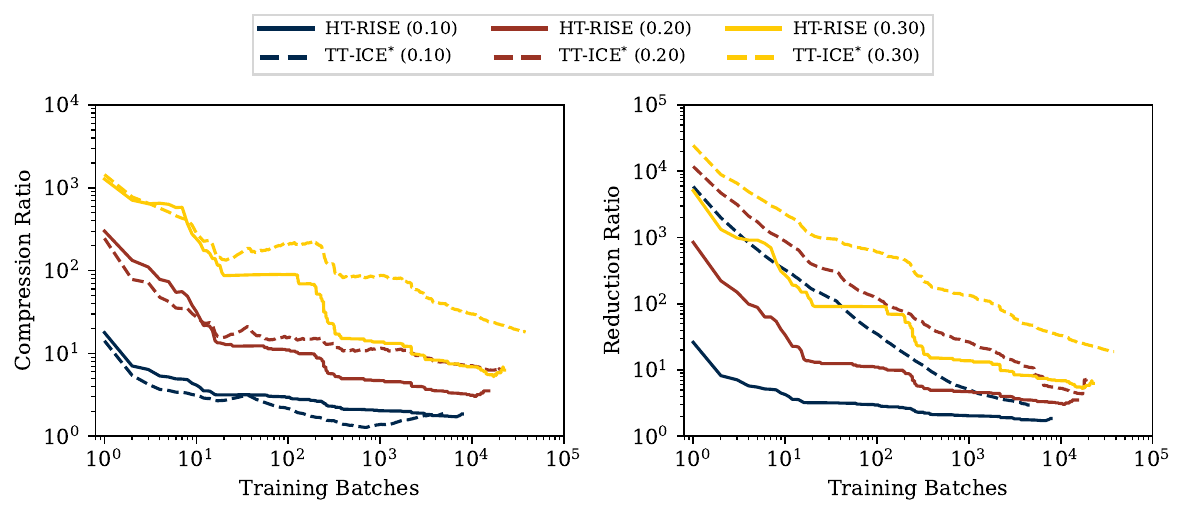}
            \caption{\small\emph{Compression ratio~(CR - left) and reduction ratio~(RR - right) of the algorithms on the Basalt MineRL dataset.\texttt{TT-ICE${}^{*}$} offers $1-2.9\times$ the CR and $1.6-3\times$ the RR of \ouralgorithm. The results are averaged over 5 seeds.}}
            \label{fig:minerl_compression_reduction}
        \end{figure}

        \Cref{fig:minerl_compression_reduction} shows the results of experiments on compression ratio and reduction ratio for the MineRL Basalt competition dataset. \ouralgorithm~at $\varepsilon_{rel}=0.30$ achieves a CR of $6.29$ and RR of $6.29$ whereas \texttt{TT-ICE${}^{*}$} achieves a CR of $18.3$ and RR of $18.9$. Similar to $\varepsilon_{rel}=0.30$, \ouralgorithm~at $\varepsilon_{rel}=0.20$ achieves a lower CR ($3.53\times$) compared to \texttt{TT-ICE${}^{*}$} ($6.45\times$) and a lower RR ($3.53\times$) compared to \texttt{TT-ICE${}^{*}$} ($7.13\times$). This trend continues for $\varepsilon_{rel}=0.10$ where \ouralgorithm~achieves a lower CR ($1.85\times$) compared to \texttt{TT-ICE${}^{*}$} ($1.88\times$) and a lower RR ($1.85\times$) compared to \texttt{TT-ICE${}^{*}$} ($2.96\times$).

        \begin{figure}[htbp]
            \centering
            \includegraphics[width=\textwidth]{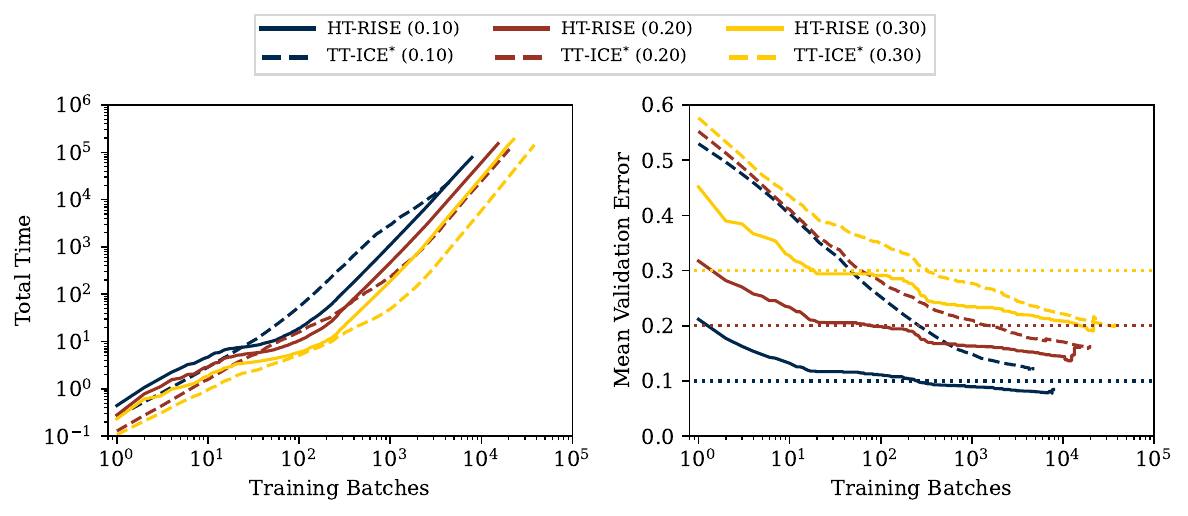}
            \caption{\small\emph{Compression time~(left) and Relative Test Error~(right) of the algorithms on the Basalt MineRL competition dataset. Each target $\varepsilon$ level is shown with dashed lines of their corresponding color. \ouralgorithm~takes $2.8-3.2\times$ the time it takes \texttt{TT-ICE${}^{*}$} to complete compressing the stream. Only \texttt{TT-ICE${}^{*}$} at $\varepsilon_{rel}=0.1$ struggles to reduce the RTE below the target $\varepsilon$ threshold. The results are averaged over 5 seeds.}}
            \label{fig:minerl_time_validation}
        \end{figure}

        \Cref{fig:minerl_time_validation} shows the results of experiments on compression time and relative test error for the MineRL Basalt competition dataset. \ouralgorithm~at $\varepsilon_{rel}=0.30$ takes $188,405$ seconds to compress 22,625 batches (449k frames - 375 videos) and achieves a RTE of $0.208$. On the other hand, \texttt{TT-ICE${}^{*}$} takes $143,873$ seconds to compress 38,146 batches (757k frames - 635 videos) and achieves a RTE of $0.202$. \ouralgorithm~at $\varepsilon_{rel}=0.20$ takes $153,146$ seconds to compress 15,222 batches (302k frames - 252 videos) and achieves a RTE of $0.166$. On the other hand, \texttt{TT-ICE${}^{*}$} takes $116,418$ seconds to compress 20,350 batches (404k frames - 339 videos) and achieves a RTE of $0.161$. \ouralgorithm~at $\varepsilon_{rel}=0.10$ takes $78,286$ seconds to compress 7863 batches (156k frames - 127 videos) and achieves a RTE of $0.083$. On the other hand, \texttt{TT-ICE${}^{*}$} takes $26,756$ seconds to compress 4861 batches (96k frames - 80 videos) and achieves a RTE of $0.119$. Only \texttt{TT-ICE${}^{*}$} at $\varepsilon_{rel}=0.10$ struggles to reduce the RTE below the target $\varepsilon$ threshold. The trend in \Cref{fig:minerl_time_validation} suggests that this issue pertains to the lack of sufficient training data for \texttt{TT-ICE${}^{*}$} to learn a generalizable approximation and should be resolved once the algorithm is presented with further gameplay episodes. Another important takeaway from \Cref{fig:minerl_time_validation} is that for $\varepsilon_{rel}=0.20$ and $\varepsilon_{rel}=0.30$, \ouralgorithm~reduces the RTE below the target $\varepsilon$ threshold within significantly less training batches compared to \texttt{TT-ICE${}^{*}$}. \ouralgorithm~crosses the RTE threshold in 82 and 19 batches, whereas \texttt{TT-ICE${}^{*}$} crosses the RTE threshold in 1597 and 322 batches for $\varepsilon_{rel}=0.20$ and $\varepsilon_{rel}=0.30$, respectively. This suggests that \ouralgorithm~generalizes better with less training data compared to \texttt{TT-ICE${}^{*}$} regardless of the target $\varepsilon$ level.

        One important thing to note from \Cref{tab:minecraft_results} is that except for $\varepsilon_{rel}=0.10$, \texttt{TT-ICE${}^{*}$} compresses more frames than \ouralgorithm. Furthermore, in contrast to \ouralgorithm, \texttt{TT-ICE${}^{*}$} runs into walltime issues rather than memory issues for all target $\varepsilon$ levels. This suggests that \texttt{TT-ICE${}^{*}$} is more memory efficient than \ouralgorithm. This is no surprise as the number of tensors in the accumulation goes to extreme levels (+100k), the RR dominates the overall cost of storing the compressed representations in memory. Since \texttt{TT-ICE${}^{*}$} has a 1-dimensional (i.e. a vector) latent space as opposed to the 2-dimensional (i.e. a matrix) latent space of \ouralgorithm, the growth of the latent space is slower for \texttt{TT-ICE${}^{*}$} compared to \ouralgorithm.
        
        \begin{table}[htbp]
            \centering
            \caption{\small\emph{Summary of the compression experiments with the MineRL Basalt competition dataset. The results are averaged over 5 seeds. 
            Algorithm: the incremental tensor decomposition algorithm, \#Batches: number of batches compressed, Comp. Time: total time in seconds, CR: compression ratio, RR: reduction ratio, RTE: mean relative test error over the test set. $\ddagger$ indicates that the experiment did not complete due to a timeout, $\dagger$ indicates that the experiment did not complete due to running out of memory.}}
            \label{tab:minecraft_results}
            \begin{tabular}{c | c c c c c c}
                $\varepsilon_{rel}$ & Algorithm & \#Batches & Comp. Time & CR & RR & RTE \\
                \hline
                \multirow{2}{*}{0.30}& HT-RISE & 22,625\oom\wt & 188,405 & 6.29 & 6.29 & 0.208\\
                & TT-ICE${}^{*}$ & 38,146\wt & 143,873 & 18.3 & 18.9 & 0.202 \\
                \hline
                \hline
                \multirow{2}{*}{0.20}& HT-RISE & 15,222\oom & 153,146 & 3.53 & 3.53 & 0.166\\
                & TT-ICE${}^{*}$ & 20,350\wt & 116,418 & 6.45 & 7.13 & 0.161 \\
                \hline
                \hline
                \multirow{2}{*}{0.10}& HT-RISE & 7863\oom & 78,286 & 1.85 & 1.85 & 0.083\\
                 & TT-ICE${}^{*}$ & 4861\wt & 26,756 & 1.88 & 2.96 & 0.119 \\
                
            \end{tabular}
        \end{table}

    \subsubsection{Multispectral images}\label{sec:multispectral_frames}
        Another natural occurence of high-dimensional data in the form of images is multispectral satellite imagery. The BigEarthNet dataset~\citep{sumbul2021bigearthnet,sumbul2019bigearthnet} consists of 590,326 image patches, each of which is a 120x120x12 pixel multispectral image. The dataset has a size of 66GB.
        The images are taken from the Sentinel-2 satellite and contain 12 spectral bands. \Cref{tab:sentinel2_bands} lists the spectral bands and their wavelengths.
        The images are taken from 125 different locations around the world. The dataset is originally designed for land cover classification and land use analysis. 
        Therefore, the images are labeled with 43 different land cover classes.
        As we do not aim to showcase the use cases of the latent space learned through incremental tensor decompositions in this work, we do not consider the labels of the images in any part of the compression experiments.
        
        \begin{table}[htbp]
            \centering
            \caption{\small\emph{Names and wavelengths of spectral bands of Sentinel2 images. SWIR: Short Wave Infrared, NIR: Near Infrared. The bands are ordered in increasing wavelength.}}
            \label{tab:sentinel2_bands}
            \begin{tabular}{c | c | c | c}
                \multirow[c]{2}{*}{Band Name} & Central & \multirow[c]{2}{*}{Bandwidth (mm)} & \multirow[c]{2}{*}{Description}\\
                & Wavelength (nm) & & \\
                \hline
                B01 & 442.7 & 21 & Coastal aerosol\\
                B02 & 492.4 & 66 & Blue\\
                B03 & 559.8 & 36 & Green\\
                B04 & 664.6 & 31 & Red\\
                B05 & 704.1 & 15 & Vegetation Red Edge 1\\
                B06 & 740.5 & 15 & Vegetation Red Edge 2\\
                B07 & 782.8 & 20 & Vegetation Red Edge 3\\
                B08 & 832.8 & 106 & NIR\\
                B08A & 864.7 & 21 & Narrow NIR\\
                B09 & 945.1 & 20 & Water Vapor\\
                B11 & 1613.7 & 91 & SWIR 1\\
                B12 & 2202.4 & 175 & SWIR 2\\
            \end{tabular}
        \end{table}

        From the 590,326 images, we select 23,602 images at random as the test set and use the rest for training. We repeat the experiments for two relative error tolerances: $\varepsilon_{rel}=0.30$ and $\varepsilon_{rel}=0.10$. The maximum wall time for this experiment is set to 2 days. Since the selection of the test set is random, we execute each experiment 5 times and report the average performance. The results are summarized in \Cref{tab:bigearthnet_results} and are presented in detail in \Cref{fig:bigearthnet_compression_reduction,fig:bigearthnet_time_validation}.
        To increase the dimensionality of the dataset, we reshape the original multispectral images into 5-dimensional tensors of size $12\times 10\times 12\times 10\times 12$ and accumulate them as batches of 100 along an auxiliary 6-th dimension.

        \begin{table}[htbp]
            \centering
            \caption{\small\emph{Summary of the compression experiments with the BigEarthNet multispectral satellite imagery dataset. The results are averaged over 5 seeds. 
            Algorithm: the incremental tensor decomposition algorithm, \#Batches: number of batches compressed, Comp. Time: total time in seconds, CR: compression ratio, RR: reduction ratio, RTE: mean relative test error over the test set. $\ddagger$ indicates that the experiment did not complete due to a timeout, $\dagger$ indicates that the experiment did not complete due to running out of memory.}}
            \label{tab:bigearthnet_results}
            \begin{tabular}{l | c c c c c c}
                $\varepsilon_{rel}$ & Algorithm & \#Batches & Comp. Time & CR & RR & RTE \\
                \hline
                \multirow{2}{*}{0.30}   & HT-RISE & 5668 & 2482.3 & 1154.3 & 962.04 & 0.242\\
                                        & TT-ICE${}^{*}$ & 5668 & 6239.6 & 2274.0 & 1901.1 & 0.260 \\
                \hline
                \hline
                \multirow{2}{*}{0.15}   & HT-RISE & 5668 & 14,896 & 91.45 & 76.21 & 0.149\\
                                        & TT-ICE${}^{*}$ & 5668 & 18,866 & 117.9 & 100.8 & 0.133 \\
                \hline
                \hline
                \multirow{2}{*}{0.10}   & HT-RISE & 5668 & 49,408 & 32.68 & 27.24 & 0.104\\
                                        & TT-ICE${}^{*}$ & 5668\wt & 44,687 & 35.31 & 31.03 & 0.088 \\
                \hline
                \hline
                \multirow{2}{*}{0.05}   & HT-RISE & 1781\oom & 20,880 & 7.29 & 6.07 & 0.054\\
                                        & TT-ICE${}^{*}$ & 129\wt & 1720.0 & 3.59 & 19.76 & 0.069 \\
            \end{tabular}
        \end{table}

        \Cref{tab:bigearthnet_results} shows that both algorithms are able to completely compress the entire dataset, which exceeds the allocated memory for the experiments, without running into any memory issues for target relative error thresholds $\varepsilon_{rel}=0.30-0.10$. Three out of five experiments for \texttt{TT-ICE${}^{*}$} at $\varepsilon_{rel}=0.10$ and five out of five experiments at $\varepsilon_{rel}=0.05$ did not complete due to a timeout. \ouralgorithm~at $\varepsilon_{rel}=0.05$ runs out of the allocated memory after compressing 1781 batches ($\sim$178,000 images). 

        For higher relative error thresholds, \texttt{TT-ICE${}^{*}$} achieves higher CR and RR than \ouralgorithm~but takes a longer time. At $\varepsilon_{rel}=0.30$, \ouralgorithm~achieves a CR of $1154.3\times$ and a RR of $962.04\times$ in $2482$ seconds, whereas \texttt{TT-ICE${}^{*}$} surpasses \ouralgorithm~with a CR of $2274.0\times$ and a RR of $1901.1\times$ in $6239$ seconds. The results at $\varepsilon_{rel}=0.15$ show that \texttt{TT-ICE${}^{*}$} compresses the training dataset in $18,866$ seconds, which is $1.27\times$ the time it takes \ouralgorithm~to compress the same data. However, \texttt{TT-ICE${}^{*}$} achieves a CR of $117.9\times$ and a RR of $100.8\times$ in contrast to \ouralgorithm~with a CR of $91.5\times$ and a RR of $76.2\times$.

        At $\varepsilon_{rel}=0.10$, \texttt{TT-ICE${}^{*}$} compresses the training dataset in $44,687$ seconds and achieves a CR of $35.31\times$ and a RR of $31.03\times$. \ouralgorithm~compresses the same data in $49,408$ seconds and achieves a CR of $32.68\times$ and a RR of $27.24\times$.
        Note that despite the compression time for \ouralgorithm~at $\varepsilon_{rel}=0.10$ is longer than \texttt{TT-ICE${}^{*}$}, \ouralgorithm~successfully completes the compression task within the allocated maximum walltime. This indicates that \texttt{TT-ICE${}^{*}$} performs significantly more updates to its approximation compared to \ouralgorithm~and therefore spends most of the allocated walltime to compute the RTE.
        One interesting thing to note here is that the RTE of \ouralgorithm~is above the target $\varepsilon_{rel}=0.10$. The updates are computed for batches of 100 images but the RTE is computed over single images in the test set. This discrepancy in the batch size used for updates and the batch size used for computing the RTE is likely the reason for the high RTE.
        
        At $\varepsilon_{rel}=0.05$, \ouralgorithm~compresses 1781 batches in $20,880$ seconds and achieves a CR of $7.29\times$ and a RR of $6.07\times$. \texttt{TT-ICE${}^{*}$} compresses 129 batches in $1720$ seconds and achieves a CR of $3.59\times$ and a RR of $19.76\times$. Note that at this target $\varepsilon$ level, \texttt{TT-ICE${}^{*}$} runs into a timeout issue and does not complete the compression task. On the other hand, \ouralgorithm~runs into memory issues for all five seeds and does not complete the compression task. Further quantitative comparisons are provided in \Cref{fig:bigearthnet_compression_reduction,fig:bigearthnet_time_validation} in \Cref{app:bigearthnet_results}.

        \Cref{fig:bigearthnet_reconstructions_train,fig:bigearthnet_reconstructions_test} present a qualitative comparison for the reconstructions of the multispectral images from the BigEarthNet dataset using $\varepsilon_{rel}$ between $0.05$ and $0.30$. Images in \Cref{fig:bigearthnet_reconstructions_train} are generated by reconstructing latent space representations of a training image learned during the compression process. Images in \Cref{fig:bigearthnet_reconstructions_test} are generated by reconstructing latent space representations of a test image that was not seen during the compression process. This is done by first projecting the unseen multispectral image onto the cores of the corresponding tensor networks and then reconstructing the image from the projection. The images are displayed in RGB format, where only the bands B02, B03, and B04 are used.

        \begin{figure}[htbp]
            \centering
                \begin{subfigure}{0.15\textwidth}\centering
                    \includegraphics[width=\columnwidth]{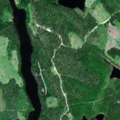}\\
                    \includegraphics[width=\columnwidth]{figures/train_26_58_og.png}
                    \caption{Original}
                \end{subfigure}
                \begin{subfigure}{0.15\textwidth}\centering
                    \includegraphics[width=\columnwidth]{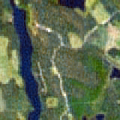}\\
                    \includegraphics[width=\columnwidth]{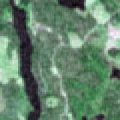}
                    \caption{$\varepsilon_{rel}=0.05$}
                \end{subfigure}
                \begin{subfigure}{0.15\textwidth}\centering
                    \includegraphics[width=\columnwidth]{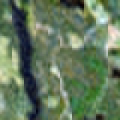}\\
                    \includegraphics[width=\columnwidth]{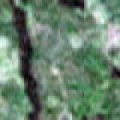}
                    \caption{$\varepsilon_{rel}=0.10$}
                \end{subfigure}
                \begin{subfigure}{0.15\textwidth}\centering
                    \includegraphics[width=\columnwidth]{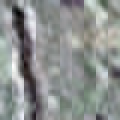}\\
                    \includegraphics[width=\columnwidth]{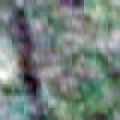}
                    \caption{$\varepsilon_{rel}=0.15$}
                \end{subfigure}
                \begin{subfigure}{0.15\textwidth}\centering
                    \includegraphics[width=\columnwidth]{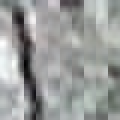}\\
                    \includegraphics[width=\columnwidth]{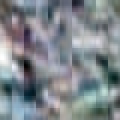}
                    \caption{$\varepsilon_{rel}=0.20$}
                \end{subfigure}
                \begin{subfigure}{0.15\textwidth}\centering
                    \includegraphics[width=\columnwidth]{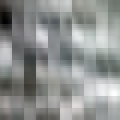}\\
                    \includegraphics[width=\columnwidth]{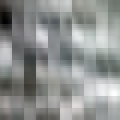}
                    \caption{$\varepsilon_{rel}=0.30$}
                \end{subfigure}
            \caption{\small\emph{Reconstructed multispectral images from BigEarthNet's training set, captured by the Sentinel-2 satellite, are displayed. Images compressed using \ouralgorithm~are in the top row, while those compressed with \texttt{TT-ICE${}^{*}$} appear in the bottom row. Different columns represent reconstructions at varying $\varepsilon$ target levels, from $\varepsilon_{rel}=0.05$ to $\varepsilon_{rel}=0.30$. Image quality decreases severely after $\varepsilon_{rel}=0.10$ for both algorithms. For $\varepsilon_{rel}=0.05$ and $0.10$, \ouralgorithm~results in shaper images compared to \texttt{TT-ICE${}^{*}$}. These images have been reconstructed from their latent representations, with only the RGB bands (B02, B03, B04) being depicted.}}
                \label{fig:bigearthnet_reconstructions_train}
            \end{figure}

            \Cref{fig:bigearthnet_reconstructions_train} shows that the quality of the reconstructed images decreases as the target $\varepsilon$ level increases. Specifically beyond $\varepsilon_{rel}=0.10$, the images become significantly pixelated and lose their original features. For $\varepsilon_{rel}=0.05$ and $0.10$, \ouralgorithm~reconstructs sharper images compared to \texttt{TT-ICE${}^{*}$}. This becomes evident especially at the finer details such as roads. \ouralgorithm~at $\varepsilon_{rel}=0.05$ results in the best reconstruction. This reconstruction has slight discoloration in comparison to the original image and maintains the fine features well.

            \begin{figure}[htbp]
                \centering
                    \begin{subfigure}{0.15\textwidth}\centering
                        \includegraphics[width=\columnwidth]{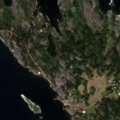}\\
                        \includegraphics[width=\columnwidth]{figures/test_85_12_og.png}
                        \caption{Original}
                    \end{subfigure}
                    \begin{subfigure}{0.15\textwidth}\centering
                        \includegraphics[width=\columnwidth]{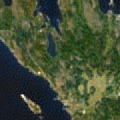}\\
                        \includegraphics[width=\columnwidth]{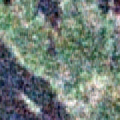}
                        \caption{$\varepsilon_{rel}=0.05$}
                    \end{subfigure}
                    \begin{subfigure}{0.15\textwidth}\centering
                        \includegraphics[width=\columnwidth]{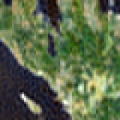}\\
                        \includegraphics[width=\columnwidth]{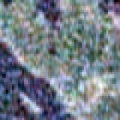}
                        \caption{$\varepsilon_{rel}=0.10$}
                    \end{subfigure}
                    \begin{subfigure}{0.15\textwidth}\centering
                        \includegraphics[width=\columnwidth]{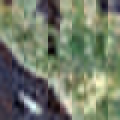}\\
                        \includegraphics[width=\columnwidth]{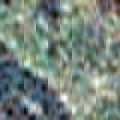}
                        \caption{$\varepsilon_{rel}=0.15$}
                    \end{subfigure}
                    \begin{subfigure}{0.15\textwidth}\centering
                        \includegraphics[width=\columnwidth]{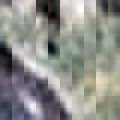}\\
                        \includegraphics[width=\columnwidth]{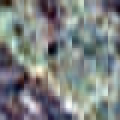}
                        \caption{$\varepsilon_{rel}=0.20$}
                    \end{subfigure}
                    \begin{subfigure}{0.15\textwidth}\centering
                        \includegraphics[width=\columnwidth]{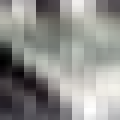}\\
                        \includegraphics[width=\columnwidth]{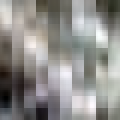}
                        \caption{$\varepsilon_{rel}=0.30$}
                    \end{subfigure}
                \caption{\small\emph{Reconstructed multispectral images from BigEarthNet's test set, captured by the Sentinel-2 satellite, are displayed. Images compressed using \ouralgorithm~are in the top row, while those compressed with \texttt{TT-ICE${}^{*}$} appear in the bottom row. Different columns represent reconstructions at varying $\varepsilon$ target levels, from $\varepsilon_{rel}=0.05$ to $\varepsilon_{rel}=0.30$. Image quality decreases severely after $\varepsilon_{rel}=0.15$ for \ouralgorithm~and after $\varepsilon_{rel}=0.10$ for \texttt{TT-ICE${}^{*}$}. For all $\varepsilon_{rel}$ levels, \ouralgorithm~results in shaper images compared to \texttt{TT-ICE${}^{*}$}. These images have been reconstructed from their latent representations, with only the RGB bands (B02, B03, B04) being depicted.}}
                \label{fig:bigearthnet_reconstructions_test}
                \end{figure}
                
            To demonstrate the generalization performance qualitatively, we present the reconstructions of the test images in \Cref{fig:bigearthnet_reconstructions_test}. Similar to training data, the quality of the reconstructed images decreases as the target $\varepsilon$ level increases. However, this time the difference between \ouralgorithm~and \texttt{TT-ICE${}^{*}$} is more pronounced. For $\varepsilon_{rel}=0.05$ and $0.10$, \texttt{TT-ICE${}^{*}$} introduces a significant amount of noise in the images which reduces the image quality. While offering a better reconstruction quality at all target $\varepsilon$ levels, \ouralgorithm~at $\varepsilon_{rel}=0.05$ results in the best reconstruction. In parallel with our findings with the training images, the reconstruction has slight discoloration in comparison to the original image and maintains the fine features well.

            \paragraph{\textbf{Summmary of conclusions from experiments with image data:}}
            Similar to the experiments with scientific data, the experiments with image data also demonstrate that \ouralgorithm~provides superior generalization performance compared to \texttt{TT-ICE${}^{*}$} across both investigated datasets. \ouralgorithm~consistently achieves lower RTE at all $\varepsilon$ levels within fewer frames and compresses in less time than \texttt{TT-ICE${}^{*}$}.
            \texttt{TT-ICE${}^{*}$} starts off with higher RR due to the properties of the TT-format but the RRs of both algorithms become comparable towards the end of the experiments, regardless of the target $\varepsilon$ level. One interesting observation here is that for higher $\varepsilon$ levels, \texttt{TT-ICE${}^{*}$} achieves higher CR than \ouralgorithm~but the image reconstructions yield lower quality at those target $\varepsilon$ levels. For the MineRL competition dataset \texttt{TT-ICE${}^{*}$} compresses more frames than \ouralgorithm~except for $\varepsilon=0.10$, but this is caused by the 1D latent space of \texttt{TT-ICE${}^{*}$} which is more memory efficient than the 2D latent space of \ouralgorithm~when $>200,000$ frames are used. Overall, the results suggest that \ouralgorithm~is more suitable for image datasets especially when the training dataset is limited or the visual quality of the reconstructions is important. \texttt{TT-ICE${}^{*}$} may be more appropriate when higher errors are allowed for compression.

\section{Conclusion}\label{sec:conclusion}
    
    In this work we proposed the batch hierarchical Tucker format, a slightly modified but more efficient version of the hierarchical Tucker format that is also suitable for incremental updates, and a new incremental tensor decomposition algorithm called Hierarchical Incremental Tucker (\ouralgorithm), which, to the best of our knowledge, is the first incremental tensor decomposition algorithm that updates an approximation in the hierarchical Tucker format. We compared \ouralgorithm~with an state-of-the-art incremental tensor decomposition algorithm \texttt{TT-ICE${}^{*}$} using two PDE-based and two image-based datasets. The results indicate that in datasets with multi-scale features \ouralgorithm~offers a fast and memory efficient way to compress high-dimensional data with a low relative test error. In simpler datasets, \texttt{TT-ICE${}^{*}$} outperforms \ouralgorithm~in terms of compression ratio, compression time, and relative test error. The results also suggest that \ouralgorithm~generalizes better with less training data compared to \texttt{TT-ICE${}^{*}$} regardless of the target $\varepsilon$ level and the dataset.

    Future work includes parallelizing \ouralgorithm~to speed up the compression process, provide a version of \ouralgorithm~to take advantage of GPU acceleration, and extending the \ouralgorithm~algorithm to work with n-ary trees. Furthermore, we plan to use the latent space learned through \ouralgorithm~in downstream learning tasks (such as generative modeling) to showcase the usability of the learned representations through \ouralgorithm.

\acks{
We acknowledge partial support by Los Alamos National Laboratories under the project “Algorithm/Software/Hardware Co-design for High Energy Density applications” at the University of Michigan, and partial support from the Automotive Research Center at the University of Michigan (UM) in accordance with Cooperative Agreement W56HZV-19-2-0001 with U.S. Army DEVCOM Ground Vehicle Systems Center.
This work used computing resources provided by an AFOSR DURIP under Program Manager Dr. Fariba Fahroo and grant number FA9550- 23-1-006 and additional computational resources and services provided by Advanced Research Computing (ARC), a division of Information and Technology Services (ITS) at the University of Michigan, Ann Arbor}

\appendix
\renewcommand\thefigure{\thesection.\arabic{figure}}
\renewcommand\thetable{\thesection.\arabic{table}}
\renewcommand\thealgorithm{\thesection.\arabic{algorithm}}
\setcounter{figure}{0}
\setcounter{table}{0}
\setcounter{algorithm}{0}
\begin{appendices}
    \section{Proofs}\label{app:proofs}
        This section contains mathematical proofs for the correctness of the \texttt{BHT-l2r} and \ouralgorithm~algorithms.

        To guarantee the correctness of the \texttt{BHT-l2r} algorithm, we first need to show that we can compute an upper bound for the approximation error incurred at an individual layer. This is done in the following lemma.
        \begin{lemma}[Layerwise approximation error]\label{thm:layerwise_approx_error}
            Given an $N$-batch of $d$-dimensional tensors $\mathcal{Y}\in\reals^{n_{1}\times\cdots\times n_{d}\times N}$, the approximation error incurred at the $\ell$-th layer $\sqrt{\|\mathcal{C}_{\ell+1}\|_{F}^{2}-\|\mathcal{C}_{\ell}\|_{F}^{2}}$ is upper bounded by
            \begin{equation*}
                \sqrt{\sum_{i=1}^{|\tree_{\ell}|}\|E_{\ell,i}\|^{2}_{F}}, \quad \text{ where } \quad
                E_{\ell,i}:=C_{\ell,(i)}-U_{\ell,i}\Sigma_{\ell,i}V^{T}_{\ell,i}
            \end{equation*}
            is the truncated portion the SVD performed on the mode-$i$ unfolding of the intermediate tensor $\mathcal{C}_{\ell}$ at the $\ell$-th layer.
        \end{lemma}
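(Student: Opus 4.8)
The plan is to show that one full layer of the leaves-to-root sweep acts on the running intermediate tensor as a \emph{single} orthogonal projection, so that the squared Frobenius-norm drop it produces is precisely the squared projection error, and then to bound that projection error by $\sum_i \|E_{\ell,i}\|_F^2$ via the standard higher-order orthogonal-projection estimate. The quantity $\|\mathcal{C}_{\ell+1}\|_F^2 - \|\mathcal{C}_\ell\|_F^2$ in the statement is exactly this per-layer reduction.

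First I would reduce the layer to projections. Processing layer $\ell$ consists of the $|\tree_\ell|$ truncated SVDs $C_{\ell,(i)} = U_{\ell,i}\Sigma_{\ell,i}V_{\ell,i}^{T} + E_{\ell,i}$ followed by contracting the orthonormal factors $U_{\ell,i}^{T}$ into the current tensor and a reshape. Reshaping preserves the Frobenius norm, so it can be ignored. Because each $U_{\ell,i}$ has orthonormal columns, contracting mode $i$ with $U_{\ell,i}^{T}$ preserves the norm relative to applying the orthogonal projector $P_{\ell,i} := U_{\ell,i}U_{\ell,i}^{T}$ along mode $i$; hence the output tensor has the same norm as $P_{\ell,1}\cdots P_{\ell,|\tree_\ell|}\mathcal{C}_\ell$. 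Since the $P_{\ell,i}$ act on distinct (non-batch) modes they commute, and each is self-adjoint and idempotent, so their product $P := \prod_{i} P_{\ell,i}$ is itself an orthogonal projection. The batch mode $N$ is never factorized; it simply rides along as a passive mode and enters no $P_{\ell,i}$, so it does not affect the argument.

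Next I would invoke Pythagoras. As $P$ is an orthogonal projection, $\|\mathcal{C}_\ell\|_F^2 = \|P\mathcal{C}_\ell\|_F^2 + \|(\eye-P)\mathcal{C}_\ell\|_F^2$, and the output norm equals $\|P\mathcal{C}_\ell\|_F$; hence the per-layer squared-norm reduction equals $\|(\eye-P)\mathcal{C}_\ell\|_F^2$. I would also record that, in the mode-$i$ unfolding, $U_{\ell,i}\Sigma_{\ell,i}V_{\ell,i}^{T} = P_{\ell,i}C_{\ell,(i)}$, so that $E_{\ell,i} = (\eye - P_{\ell,i})C_{\ell,(i)}$ and therefore $\|E_{\ell,i}\|_F = \|(\eye - P_{\ell,i})\mathcal{C}_\ell\|_F$.

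The main work, and the step I expect to be the crux, is the telescoping bound $\|(\eye-P)\mathcal{C}_\ell\|_F^2 \le \sum_i \|E_{\ell,i}\|_F^2$. I would write the identity $(\eye - P)\mathcal{C}_\ell = \sum_{i} \left(\prod_{j<i}P_{\ell,j}\right)(\eye - P_{\ell,i})\mathcal{C}_\ell$ and then show the summands are pairwise orthogonal: for $i<i'$ the $i'$-th summand carries a factor $P_{\ell,i}$ while the $i$-th carries $(\eye-P_{\ell,i})$, and commutativity together with $P_{\ell,i}(\eye-P_{\ell,i})=0$ forces their inner product to vanish. Orthogonality turns the identity into $\|(\eye-P)\mathcal{C}_\ell\|_F^2 = \sum_i \|(\prod_{j<i}P_{\ell,j})(\eye-P_{\ell,i})\mathcal{C}_\ell\|_F^2$, and bounding each leading product of projections by non-expansiveness gives $\le \sum_i \|(\eye-P_{\ell,i})\mathcal{C}_\ell\|_F^2 = \sum_i \|E_{\ell,i}\|_F^2$. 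Taking square roots yields the claim. The subtlety worth emphasizing is that this pairwise orthogonality, which upgrades the naive triangle-inequality estimate $(\sum_i\|E_{\ell,i}\|_F)^2$ to the sharper $\sum_i\|E_{\ell,i}\|_F^2$, hinges on all $U_{\ell,i}$ being computed from unfoldings of the \emph{same} tensor $\mathcal{C}_\ell$, i.e.\ on the plain \texttt{HOSVD} used in \Cref{alg:batch_htucker} rather than a sequentially-updated variant.
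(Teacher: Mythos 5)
Your proof is correct, and it is genuinely more self-contained than the paper's: where you prove the bound from first principles, the paper simply invokes the successive-truncation lemma of \cite{grasedyck2010hierarchical} (their Lemma 3.8), observing that processing all nodes of layer $\ell$ amounts to a sequence of orthogonal projections applied to $\mathcal{C}_{\ell}$, so the nodewise truncation errors accumulate in squared Frobenius norm. Your chain of steps — reduction to a single orthogonal projection $P=\prod_{i}P_{\ell,i}$ via commutativity of mode-wise projectors, the Pythagorean identity for the per-layer norm drop, the identification $E_{\ell,i}=(\eye-P_{\ell,i})C_{\ell,(i)}$, and the telescoping decomposition whose summands are pairwise orthogonal because $P_{\ell,i}(\eye-P_{\ell,i})=0$ — is sound, and it correctly upgrades the naive triangle-inequality estimate to the sum-of-squares bound. (The mismatch between $\|\mathcal{C}_{\ell+1}\|_{F}^{2}-\|\mathcal{C}_{\ell}\|_{F}^{2}$ in the lemma statement and the SVDs being taken on unfoldings of $\mathcal{C}_{\ell}$ is an indexing inconsistency in the paper itself; your reading of the quantity as the per-layer squared-norm reduction is the intended one.) The comparison is instructive: your argument leans on commutativity, which is available because each projector acts on a distinct mode, whereas the cited lemma is proven sequentially — write $A-\pi_{m}\cdots\pi_{1}A=(A-\pi_{m}A)+\pi_{m}(A-\pi_{m-1}\cdots\pi_{1}A)$, use orthogonality of the two parts and non-expansiveness, and induct — which requires no commutativity. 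For this reason your closing caveat is slightly overstated: the sharp sum-of-squares bound does not hinge on all $U_{\ell,i}$ being computed from the same tensor $\mathcal{C}_{\ell}$; under the \texttt{ST-HOSVD} variant mentioned in the paper's footnote the same bound persists, only with $E_{\ell,i}$ redefined as the truncation error on the partially contracted tensor. What your route buys is transparency and an exact orthogonal decomposition (equality up to the final non-expansiveness step); what the paper's cited route buys is coverage of sequential within-layer updates without modification.
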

        \begin{proof}
            This is a straight forward result following the application of \cite[Lemma 3.8]{grasedyck2010hierarchical} to each node in $\tree_{\ell}$.
            In \Cref{eq:ht_leaf_contraction}, we have shown that the core tensor at the $\ell$-th layer is computed by contracting the core tensor at the $(\ell+1)$-th layer with orthonormal matrices obtained through error-truncated SVDs on the $\ell$-th layer.
            Thus, decomposing the $\ell$-th layer can be seen analogous to the notion \textit{successive truncation} of Lemma 3.8 of~\cite{grasedyck2010hierarchical}.
            Therefore, we can upper bound the approximation error at the $\ell$-th layer with the sum of the Frobenius norms of the truncated error matrices $E_{\ell,i}$.
         \end{proof}

        \Cref{thm:layerwise_approx_error} is a crucial step in providing an upper bound on the total approximation error of the \texttt{BHT-l2r} algorithm. The total approximation error is computed by summing the approximation errors at each layer, as shown in the following theorem.
        \begin{theorem}[Total approximation error~{\cite[Lemma 3.10]{grasedyck2010hierarchical}}]\label{thm:total_approx_error}
          Given an $N$-batch of $d$-dimensional tensors $\mathcal{Y}\in\reals^{n_{1}\times\cdots\times n_{d}\times N}$, the approximation error of the reconstructed batch hierarchical Tucker decomposition $\tilde{\mathcal{Y}}$ is upper bounded by $\sum_{\ell=0}^{p}\sum_{i=1}^{|\tree_{\ell}|}\|E_{\ell,i}\|^{2}_{F},$
            \begin{equation}\label{eq:total_approx_error}
                \|\mathcal{Y}-\tilde{\mathcal{Y}}\|^{2}_{F}\leq\sum_{\ell=0}^{p}\sum_{i=1}^{|\tree_{\ell}|}\|E_{\ell,i}\|^{2}_{F},
            \end{equation}
            where $p$ is the depth of the dimension tree $\tree$.
        \end{theorem}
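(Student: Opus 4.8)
The plan is to express the squared total error as a telescoping sum of the per-layer energy losses that \Cref{thm:layerwise_approx_error} already controls, using the fact that the reconstruction $\tilde{\mathcal{Y}}$ is nothing but an orthogonal projection of $\mathcal{Y}$. The starting observation is that every leaf and transfer core produced by \texttt{BHT-l2r} has orthonormal columns (after the reshapings of \eqref{eq:ht_tucker_core_reshape}), so contracting the retained cores back together is an isometry: it expands the fully reduced core without changing its Frobenius norm. Consequently $\tilde{\mathcal{Y}}$ lies in the subspace spanned by the retained bases, $\mathcal{Y}-\tilde{\mathcal{Y}}$ is orthogonal to that subspace, and a Pythagorean identity applies. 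First I would establish
\begin{equation*}
    \|\mathcal{Y}-\tilde{\mathcal{Y}}\|_{F}^{2} = \|\mathcal{Y}\|_{F}^{2} - \|\tilde{\mathcal{Y}}\|_{F}^{2} = \|\mathcal{Y}\|_{F}^{2} - \|\mathcal{C}_{0}\|_{F}^{2},
\end{equation*}
where $\mathcal{C}_{0}$ denotes the innermost core retained after all layers have been decomposed (so $\mathcal{C}_{p+1}=\mathcal{Y}$ in the convention of \Cref{thm:layerwise_approx_error}).

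Next I would telescope the right-hand side across the layers. Summing the layer-$\ell$ norm drops gives
\begin{equation*}
    \|\mathcal{Y}\|_{F}^{2} - \|\mathcal{C}_{0}\|_{F}^{2} = \sum_{\ell=0}^{p}\left(\|\mathcal{C}_{\ell+1}\|_{F}^{2} - \|\mathcal{C}_{\ell}\|_{F}^{2}\right),
\end{equation*}
and each summand is exactly the squared per-layer approximation error that \Cref{thm:layerwise_approx_error} bounds by $\sum_{i=1}^{|\tree_{\ell}|}\|E_{\ell,i}\|_{F}^{2}$. Combining the two displays then yields \eqref{eq:total_approx_error} directly. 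The per-layer lemma does the heavy lifting within a layer (the successive-truncation argument of \cite[Lemma 3.8]{grasedyck2010hierarchical}), so this outer step is just a telescoping sum plus the Pythagorean identity.

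I expect the main obstacle to be rigorously justifying the orthogonality structure that makes both the Pythagorean identity and the isometry claim valid, namely that the truncation residuals introduced at distinct layers live in mutually orthogonal subspaces so that the cross terms vanish and the norm drops add without interaction. This is precisely the content imported from \cite[Lemma 3.10]{grasedyck2010hierarchical}, so I would invoke that result for the orthogonality bookkeeping rather than re-derive it. The only point requiring fresh attention is the batch modification: because the batch dimension $N$ is appended to each index set $\indexset_{\tree_{\ell}}$ and is never subjected to an \texttt{SVD} truncation, it is carried along untouched throughout the decomposition and contributes no error matrix $E_{\ell,i}$. Hence the batch dimension neither enlarges the index of summation nor adds a term, and the bound is identical in form to the standard \texttt{HT} case, which completes the adaptation.
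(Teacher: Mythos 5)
Your proposal is correct and follows essentially the same route as the paper's proof, which simply invokes \Cref{thm:layerwise_approx_error} layer by layer and sums the resulting bounds over the dimension tree (crediting the orthogonality bookkeeping to \cite[Lemma 3.10]{grasedyck2010hierarchical}, exactly as you do). Your write-up merely makes explicit the mechanics the paper leaves implicit --- the Pythagorean identity $\|\mathcal{Y}-\tilde{\mathcal{Y}}\|_{F}^{2}=\|\mathcal{Y}\|_{F}^{2}-\|\mathcal{C}_{0}\|_{F}^{2}$ from the orthonormality of the retained cores, the telescoping of the per-layer norm drops, and the observation that the untruncated batch dimension contributes no error term --- all of which are consistent with \Cref{thm:layerwise_approx_error} and \Cref{thm:reconstruction_norm_equality}.
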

        \begin{proof}
            This is a direct generalization of \Cref{thm:layerwise_approx_error} to the entire dimension tree $\tree$. Once an upper bound on the approximation error at each layer is established, the total approximation error can be computed by summing the approximation errors at each layer.
        \end{proof}
        
        This next corollary provides a direct application of how the approximation error can be distributed over the individual cores of the hierarchical tensor network.
        \begin{corollary}[\texttt{BHT-l2r} approximation error]\label{thm:bht_leaves_to_root}
            Let $\mathcal{Y}\in\reals^{n_{1}\times\cdots\times n_{d}\times N}$ be an $N$-batch of $d$-dimensional tensors and $\hierarchical_{\mathcal{Y}}$ be its approximation in batch HT format computed with \Cref{alg:batch_htucker} with an absolute error tolerance $\varepsilon_{abs}$, cores $\core$, and dimension tree $\tree$. Then, the reconstruction of this approximation using \cref{eq:htucker_reconstruction}, $\tilde{\mathcal{Y}}$, satisfies $\|\mathcal{Y}-\tilde{\mathcal{Y}}\|_{F}\leq\varepsilon_{abs}$ if a nodewise error tolerance $\varepsilon_{nw}$ is chosen such that
            \begin{equation}
                \varepsilon_{nw}=\frac{\varepsilon_{abs}}{\sqrt{2d-2}}.
            \end{equation}
        \end{corollary}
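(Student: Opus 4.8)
The plan is to combine the global bound of \Cref{thm:total_approx_error} with an exact count of the error-truncated SVDs carried out by \Cref{alg:batch_htucker}, and then distribute the tolerance $\varepsilon_{abs}$ uniformly across them. First I would invoke \Cref{thm:total_approx_error} to obtain $\|\mathcal{Y}-\tilde{\mathcal{Y}}\|_F^2 \le \sum_{\ell=0}^{p}\sum_{i=1}^{|\tree_\ell|}\|E_{\ell,i}\|_F^2$, which reduces the claim to bounding the right-hand side by $\varepsilon_{abs}^2$.

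Next I would determine which summands are actually nonzero. The dimension tree has $2d-1$ nodes. Every leaf and every transfer node contributes exactly one error-truncated SVD, each run at the tolerance $\varepsilon_{nw}$, so $\|E_{\ell,i}\|_F \le \varepsilon_{nw}$ for all such nodes. The root node is the decisive case for the batch format: since the batch dimension $N$ is excluded from every SVD, the root core $\core_{0,1}$ is produced by a pure reshape of the final intermediate tensor and incurs no truncation, i.e.\ $E_{0,1}=0$. Hence precisely the $2d-2$ non-root nodes carry nonzero error. Then the bound follows by arithmetic: $\|\mathcal{Y}-\tilde{\mathcal{Y}}\|_F^2 \le (2d-2)\,\varepsilon_{nw}^2$, and substituting $\varepsilon_{nw}=\varepsilon_{abs}/\sqrt{2d-2}$ gives $\|\mathcal{Y}-\tilde{\mathcal{Y}}\|_F^2 \le \varepsilon_{abs}^2$; taking the square root closes the proof.

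The step I expect to require the most care is the node count, namely justifying the denominator $2d-2$ rather than the $2d-3$ of the classical bound in \Cref{thm:ht_nodewise_error}. The distinction is structural: in the standard HT decomposition the terminal matrix SVD $\mathcal{C}_1 = U_1\Sigma_1 V_1^T$ yields $\core_{1,1}$, $\core_{1,2}$, and the root $\core_{0,1}$ from a single truncation, collapsing the two first-layer nodes into one error term and giving $2d-3$. In the batch format $\mathcal{C}_1$ is three-dimensional (it still carries $N$), so the two first-layer cores each require their own SVD while the root stays untruncated --- exactly trading the ``saved'' root SVD for one additional first-layer SVD, which raises the count to $2d-2$. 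I would verify this by tracing \Cref{alg:batch_htucker} through its final \texttt{HOSVD} and reshape steps, confirming that no SVD ever touches the batch mode and that each non-root node is handled by exactly one truncation at tolerance $\varepsilon_{nw}$.
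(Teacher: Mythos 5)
Your proof is correct and follows essentially the same route as the paper's: invoke \Cref{thm:total_approx_error}, bound each nodewise truncation by $\varepsilon_{nw}$, count $2d-2$ truncated \texttt{SVD}s, substitute $\varepsilon_{nw}=\varepsilon_{abs}/\sqrt{2d-2}$, and take square roots. Your accounting of the count is in fact more explicit than the paper's --- you justify that the root core is a pure reshape with $E_{0,1}=0$ (so only the $2d-2$ non-root nodes contribute, versus $2d-3$ in classical HT where the final matrix \texttt{SVD} covers both layer-one cores and the root), whereas the paper simply asserts that the double sum repeats $2d-2$ times.
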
    
        \begin{proof}
            The proof follows from \Cref{thm:total_approx_error}. If we set $\|E_{\ell,i}\| \leq \varepsilon_{nw}$ for all $\ell$ and $i$, then using \eqref{eq:total_approx_error} the expression for the overall approximation error becomes
            \begin{equation}
                \|\mathcal{Y}-\tilde{\mathcal{Y}}\|^{2}_{F}\leq\sum_{\ell=0}^{p}\sum_{i=1}^{|\tree_{\ell}|}\varepsilon_{nw}^{2}.
            \end{equation}
            Since $\varepsilon_{nw}$ is assumed constant for all layers and nodes, the double sum on the right-hand side is simply repeated $2d-2$ times, making the expression
            \begin{equation*}
                \|\mathcal{Y}-\tilde{\mathcal{Y}}\|^{2}_{F}\leq(2d-2)\varepsilon_{nw}^{2}.
            \end{equation*}
            Setting $\varepsilon_{nw}=\varepsilon_{abs}/\sqrt{2d-2}$ and taking the square root of both sides yields \eqref{eq:total_approx_error} and therefore completes the proof.
        \end{proof}

        Once we can guarantee an approximation error upper bound to \texttt{BHT-l2r}, a next natural step is to compute the exact approximation error of incurred by the \texttt{BHT-l2r} algorithm. We use \Cref{eq:relative_error_threshold} as a proxy for the approximation error. In the following claim we will prove that the relative error of the approximation can be computed directly from the Frobenius norms of the original tensor and its projected latent space representation.
        \begin{claim}[Orthogonal reconstructions]\label{thm:reconstruction_norm_equality}
            Given an $N^{k}$-batch of $d$-dimensional tensors $\mathcal{Y}^{k}\in\reals^{n_{1}\times\cdots\times n_{d}\times N^{k}}$, an accumulation of $d$-dimensional tensors in batch HT format $\hierarchical_{\mathcal{X}^{k-1}}$, the projection of $\mathcal{Y}^{k}$ onto the HT-cores $\bar{\mathcal{C}}^{k}_{1}\in\reals^{r_{1,1}\times r_{1,2}\times N^{k}}$, and the reconstruction of $\mathcal{Y}^{k}$ from $\bar{\mathcal{C}}^{k}_{1}$ using the HT-cores $\tilde{\mathcal{Y}}^{k}$, the error of approximation is equal to the difference in squared Frobenius norm of $\tilde{\mathcal{Y}}^{k}$ and $\bar{\mathcal{C}}^{k}_{1}$, i.e,
            \begin{equation*}
                \|\mathcal{Y}^{k}-\tilde{\mathcal{Y}}^{k}\|_{F}=\sqrt{\|\mathcal{Y}^{k}\|_{F}^{2}-\|\bar{\mathcal{C}}^{k}_{1}\|_{F}^{2}}.
            \end{equation*}
        \end{claim}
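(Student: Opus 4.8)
The plan is to recognize the projection--reconstruction pipeline as an orthogonal projection onto the subspace spanned by the existing HT representation, and then invoke the Pythagorean identity. The crucial structural fact, inherited from the way \texttt{BHT-l2r} (\Cref{alg:batch_htucker}) builds the cores, is that every leaf core $\core_{p,i}$ and every transfer core $\core_{\ell,j}$ (after the reshaping that flattens its two successor indices into a single row index) is a matrix with orthonormal columns, since each is the left singular factor of a truncated \texttt{SVD}. First I would state this orthonormality precisely node-by-node and observe that the batch dimension $N^{k}$ is never involved in any \texttt{SVD} or its associated contraction, so it simply rides along as an inert mode throughout.

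Next I would formalize the two maps. Let $\mathcal{P}$ denote the leaves-to-root contraction sending $\mathcal{Y}^{k}$ to its latent representation $\bar{\mathcal{C}}^{k}_{1}$ (\Cref{alg:HIT}, lines~\ref{alg:line:projection_start}--\ref{alg:line:projection_end}), and let $\mathcal{R}$ denote the root-to-leaves reconstruction of \eqref{eq:htucker_reconstruction}, so that $\tilde{\mathcal{Y}}^{k}=\mathcal{R}(\bar{\mathcal{C}}^{k}_{1})=\mathcal{R}(\mathcal{P}(\mathcal{Y}^{k}))$. The two claims I would prove are: (i) $\mathcal{R}$ is an isometry, $\|\mathcal{R}(\mathcal{Z})\|_{F}=\|\mathcal{Z}\|_{F}$ for every latent $\mathcal{Z}$; and (ii) $\mathcal{P}$ is the Frobenius adjoint of $\mathcal{R}$. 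Both follow by composing the single-core statements. Each reconstruction step contracts an intermediate tensor with a core $U$ having orthonormal columns along the \emph{parent} edge, which isometrically embeds the rank mode into the larger intermediate/physical mode; composing isometries (interleaved with the norm-preserving reshapes dictated by the index sets $\indexset_{\tree_{\ell}}$) yields an isometry, giving (i). Each projection step contracts with the \emph{transpose} of the corresponding core, which is exactly the adjoint of that reconstruction step, so reversing the order gives $\mathcal{P}=\mathcal{R}^{*}$, i.e. (ii).

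From (i) we have $\mathcal{R}^{*}\mathcal{R}=\mathrm{Id}$ on the latent space, so $P:=\mathcal{R}\mathcal{R}^{*}=\mathcal{R}\mathcal{P}$ is self-adjoint and idempotent, hence an orthogonal projector, and $\tilde{\mathcal{Y}}^{k}=P\mathcal{Y}^{k}$. The Pythagorean identity for orthogonal projectors then gives
\begin{equation*}
\|\mathcal{Y}^{k}\|_{F}^{2}=\|P\mathcal{Y}^{k}\|_{F}^{2}+\|(\mathrm{Id}-P)\mathcal{Y}^{k}\|_{F}^{2}=\|\tilde{\mathcal{Y}}^{k}\|_{F}^{2}+\|\mathcal{Y}^{k}-\tilde{\mathcal{Y}}^{k}\|_{F}^{2}.
\end{equation*}
Finally, since $\tilde{\mathcal{Y}}^{k}=\mathcal{R}(\bar{\mathcal{C}}^{k}_{1})$ and $\mathcal{R}$ is an isometry by (i), we have $\|\tilde{\mathcal{Y}}^{k}\|_{F}=\|\bar{\mathcal{C}}^{k}_{1}\|_{F}$; substituting this, rearranging, and taking square roots yields the asserted equality.

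I expect the main obstacle to be the rigorous justification of (i) and (ii): making precise that the composition of per-core isometric embeddings, interleaved with the reshapings, is \emph{globally} an isometry whose adjoint is exactly $\mathcal{P}$. Concretely, one must verify that each reshape is a norm-preserving re-indexing, and that the orthonormality of a transfer core under its successor-index flattening is compatible with the orthonormality already established one layer below, so that the hierarchical ranges nest correctly and no cross terms survive; one must also confirm that carrying the batch mode through every contraction leaves all the relevant inner products untouched. These are the steps where the HT nestedness must be spelled out carefully rather than asserted.
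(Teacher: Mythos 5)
Your proposal is correct and takes essentially the same route as the paper's proof: both arguments rest on the orthonormality of the (reshaped) cores, establish that the root-to-leaves reconstruction preserves the Frobenius norm (so $\|\tilde{\mathcal{Y}}^{k}\|_{F}=\|\bar{\mathcal{C}}^{k}_{1}\|_{F}$), and then invoke the Pythagorean identity for the orthogonal projection to kill the cross term. The only difference is presentational --- you package the argument in operator language (isometry $\mathcal{R}$, adjoint $\mathcal{P}=\mathcal{R}^{*}$, projector $P=\mathcal{R}\mathcal{R}^{*}$), whereas the paper argues concretely via the split $\mathcal{Y}^{k}=\mathcal{Y}^{k}_{\parallel}+\mathcal{Y}^{k}_{\perp}$ and a layerwise Kronecker-product norm identity.
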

        \begin{proof}
            The proof is direct and has two steps. First we will show that $\|\mathcal{Y}^{k} - \tilde{\mathcal{Y}}^{k}\|_{F}^{2} = \|\mathcal{Y}^{k}\|_{F}^{2} - \|\tilde{\mathcal{Y}}^{k}\|_{F}^{2}$ and then we will show that $\|\tilde{\mathcal{Y}}^{k}\|_{F}^{2} = \|\bar{\mathcal{C}}^{k}_{1}\|_{F}^{2}$.

            First, we take the square of the approximation error to get $\|\mathcal{Y}^{k}-\tilde{\mathcal{Y}}^{k}\|_{F}^{2}$. Then, by using $\|A+B\|_{F}^{2}=\|A\|_{F}^{2} + \|B\|_{F}^{2}+2\langle A,B \rangle_{F}$ we expand the squared Frobenius norm of the approximation error as
            \begin{equation*}
                    \|\mathcal{Y}^{k}-\tilde{\mathcal{Y}}^{k}\|_{F}^{2} = \|\mathcal{Y}^{k}\|_{F}^{2} - 2\langle\mathcal{Y}^{k},\tilde{\mathcal{Y}}^{k}\rangle_{F} + \|\tilde{\mathcal{Y}}^{k}\|_{F}^{2}\\
            \end{equation*}
            Without loss of generality, we can split $\mathcal{Y}$ into $\mathcal{Y}^{k} = \mathcal{Y}^{k}_{\parallel} + \mathcal{Y}^{k}_{\perp}$, where $\mathcal{Y}^{k}_{\parallel}$ is the projection of $\mathcal{Y}^{k}$ onto the HT-cores $\core^{k-1}$ and $\mathcal{Y}^{k}_{\perp}$ is the orthogonal component of $\mathcal{Y}^{k}_{\parallel}$ with respect to the HT-cores of the accumulation $\mathcal{X}^{k-1}_{\hierarchical}$. Then, using the properties of the Frobenius inner product we can write $\langle\mathcal{Y}^{k},\tilde{\mathcal{Y}}^{k}\rangle_{F} = \langle\mathcal{Y}^{k}_{\parallel},\tilde{\mathcal{Y}}^{k}\rangle_{F} + \langle\mathcal{Y}^{k}_{\perp},\tilde{\mathcal{Y}}^{k}\rangle_{F}$. Since $\tilde{\mathcal{Y}}^{k}$ is the reconstruction of $\bar{\mathcal{C}}^{k}_{1}$ using the HT-cores, the first term is equal to the squared Frobenius norm of $\tilde{\mathcal{Y}}^{k}$. The second term is zero since $\mathcal{Y}^{k}_{\perp}$ is by definition orthogonal to $\tilde{\mathcal{Y}}^{k}$. Therefore, we can write the squared Frobenius norm of the approximation error as
            \begin{equation*}
                \|\mathcal{Y}^{k}-\tilde{\mathcal{Y}}^{k}\|_{F}^{2} = \|\mathcal{Y}^{k}\|_{F}^{2} - \|\tilde{\mathcal{Y}}^{k}\|_{F}^{2}
            \end{equation*}
            and complete the first part of the proof.

            For the second part, we need to show that $\|\tilde{\mathcal{Y}}^{k}\|_{F}^{2} = \|\bar{\mathcal{C}}^{k}_{1}\|_{F}^{2}$. The reconstruction of $\tilde{\mathcal{Y}}^{k}$ from $\bar{\mathcal{C}}^{k}_{1}$ is done by performing a sequence of outer products with the HT-cores $\core^{k-1}$.
            For the first layer of HT-cores, the outer product that yields the intermediate core for the $i$-th tensor in the $N^{k}$-batch, $\tilde{\mathcal{C}}^{k}_{2}(:,:,:,:,i)$\footnote{$\mathcal{A}(:,:,:,:,i)$ refers to the $i$-th mode-$5$ slice for the 5-dimensional tensor $\mathcal{A}$}, can be shown as a reshaping of
            \begin{equation*}
                \sum_{p,q=1}^{r_{1,1},r_{1,2}}\bar{\mathcal{C}}^{k}_{1}(p,q,i)\core^{k-1}_{1,1}(:,:,p) \otimes \core^{k-1}_{1,2}(:,:,q),
            \end{equation*}
            where $\otimes$ denotes the Kronecker product. 
            Note that $\core^{t}_{\ell,j}(:,:,i)$ is an orthonormal matrix for all $t,\ell,j,i$ if it is obtained through \texttt{bht-l2r} or \ouralgorithm.
            For Kronecker products, we have $\|A \otimes B\| = \|A\| \|B\|$ for any induced norm, therefore we have $\|\tilde{\mathcal{C}}^{k}_{2}(:,:,:,:,i)\|_{F}^{2}=\|\bar{\mathcal{C}}^{k}_{1}(p,q,i)\core^{k-1}_{1,1}(:,:,p)\otimes \core^{k-1}_{1,2}(:,:,q)\|_{F}^{2}=\|\bar{\mathcal{C}}^{k}_{1}(p,q,i)\|_{F}^{2}$ and $\|\tilde{\mathcal{C}}^{k}_{2}\|_{F}^{2}=\|\bar{\mathcal{C}}^{k}_{1}\|_{F}^{2}$.

            Since the orthonormality of $\core^{k-1}_{\ell,j}(:,:,i)$ holds for all $t,\ell,j,i$, the same argument can be applied to all layers of the HT-cores. Therefore, the squared Frobenius norm of the reconstruction is equal to the squared Frobenius norm of the projection, i.e., $\|\tilde{\mathcal{Y}}^{k}\|_{F}^{2}=\|\bar{\mathcal{C}}^{k}_{1}\|_{F}^{2}$. This completes the proof.
        \end{proof}
        \Cref{thm:reconstruction_norm_equality} also applies to the \ouralgorithm~algorithm, as the reconstruction of the streamed tensor from the hierarchical Tucker approximation is done in the same way as in \texttt{BHT-l2r}, using orthonormal cores.
        
        Finally with then next theorem we provide a proof of correctness for the \ouralgorithm~algorithm.
        
        \begin{theorem}[\ouralgorithm~approximation error]\label{thm:hit_approx_error}
        \end{theorem}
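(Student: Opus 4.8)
The plan is to prove, by induction on the increment index $k$, that the representation $\hierarchical_{\mathcal{X}^{k}}$ produced by \ouralgorithm~(\Cref{alg:HIT}) reconstructs every batch seen so far to within the prescribed relative tolerance, i.e. $\|\mathcal{Y}^{\ell}-\tilde{\mathcal{Y}}^{\ell}\|_{F}\leq\varepsilon_{rel}\|\mathcal{Y}^{\ell}\|_{F}$ for all $\ell\leq k$; the accumulation bound $\|\mathcal{X}^{k}-\tilde{\mathcal{X}}^{k}\|_{F}\leq\varepsilon_{rel}\|\mathcal{X}^{k}\|_{F}$ then follows immediately, since concatenation along the batch mode is orthogonal and hence $\|\mathcal{X}^{k}-\tilde{\mathcal{X}}^{k}\|_{F}^{2}=\sum_{\ell\leq k}\|\mathcal{Y}^{\ell}-\tilde{\mathcal{Y}}^{\ell}\|_{F}^{2}$. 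The base case $k=1$ is exactly \Cref{thm:bht_leaves_to_root}, because the first increment is compressed with \texttt{BHT-l2r}. For the inductive step I assume the claim holds at $k-1$ and establish it at $k$, which splits naturally into two parts: (i) the freshly streamed batch $\mathcal{Y}^{k}$ is captured, and (ii) the reconstructions of the earlier batches are left unchanged.

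For part (i) I first treat the branch where the projection error $\varepsilon_{proj}=\sqrt{\|\mathcal{Y}^{k}\|_{F}^{2}-\|\bar{\mathcal{C}}^{k}_{1}\|_{F}^{2}}$ falls below $\varepsilon_{des}=\varepsilon_{rel}\|\mathcal{Y}^{k}\|_{F}$. Here only the root core is extended, and \Cref{thm:reconstruction_norm_equality} gives exactly $\|\mathcal{Y}^{k}-\tilde{\mathcal{Y}}^{k}\|_{F}=\varepsilon_{proj}\leq\varepsilon_{des}$, as required. In the update branch I exploit the orthogonality built into the residual step: since $U^{k}_{R_{\ell,j}}$ is extracted from $R_{\ell,j}=\Pi^{k-1}_{\ell,j}C^{k}_{\ell,(j)}$, we have $U^{k}_{R_{\ell,j}}\perp U^{k-1}_{\ell,j}$, so the appended basis $U^{k}_{\ell,j}=[\,U^{k-1}_{\ell,j}\ \ U^{k}_{R_{\ell,j}}\,]$ is orthonormal and the node-wise reconstruction residual collapses to $C^{k}_{\ell,(j)}-U^{k}_{\ell,j}(U^{k}_{\ell,j})^{T}C^{k}_{\ell,(j)}=R_{\ell,j}-U^{k}_{R_{\ell,j}}(U^{k}_{R_{\ell,j}})^{T}R_{\ell,j}=E_{R_{\ell,j}}$, whose norm is controlled at $\varepsilon_{nw}$ by the error-truncated \texttt{SVD}. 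Each updated core therefore acts as an orthonormal successive-truncation factor with per-node error at most $\varepsilon_{nw}$, so \Cref{thm:layerwise_approx_error} and \Cref{thm:total_approx_error} apply verbatim and give $\|\mathcal{Y}^{k}-\tilde{\mathcal{Y}}^{k}\|_{F}^{2}\leq(2d-2)\varepsilon_{nw}^{2}$; substituting $\varepsilon_{nw}=\varepsilon_{des}/\sqrt{2d-2}$ closes this part exactly as in \Cref{thm:bht_leaves_to_root}.

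For part (ii) the key structural observation is that \ouralgorithm~only \emph{appends} orthogonal directions to each core and \emph{pads} the matching parent mode with zeros, while the previous latent slices are preserved verbatim in $\mathcal{G}^{k}_{0,1}=\bar{\mathcal{G}}^{k-1}_{0,1}\pad^{3}\bar{\mathcal{C}}^{k}_{1}$. I would argue by a downward induction over the layers that, for any earlier increment $\ell<k$, contracting the root-to-leaves map on the old root slices reproduces the step-$(k-1)$ reconstruction $\tilde{\mathcal{Y}}^{\ell}$ exactly. Concretely, each expanded core decomposes into an ``old'' block and a ``new'' block along the rank it grew, and the zero padding inserted into the parent by \eqref{eq:incremental_zero_padding1} guarantees that the new columns couple only to zero entries whenever the contraction is driven by an old root slice; by multilinearity of the reconstruction every new direction is annihilated, so the old slices interact solely with the unperturbed sub-blocks. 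Hence $\tilde{\mathcal{Y}}^{\ell}$ is identical before and after the update, and the inductive hypothesis transfers the bound $\|\mathcal{Y}^{\ell}-\tilde{\mathcal{Y}}^{\ell}\|_{F}\leq\varepsilon_{rel}\|\mathcal{Y}^{\ell}\|_{F}$ to increment $k$.

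I expect the invariance argument in part (ii) to be the main obstacle: it requires carefully tracking which successor mode of each parent is padded (first or second, per \eqref{eq:incremental_zero_padding1}) and showing that the block-zero structure propagates consistently through every layer, so that the ``new'' rank directions are genuinely decoupled from the old root slices across the whole tree rather than merely at a single node. By contrast, the error accounting in part (i) is comparatively routine once the residual identity $C^{k}_{\ell,(j)}-U^{k}_{\ell,j}(U^{k}_{\ell,j})^{T}C^{k}_{\ell,(j)}=E_{R_{\ell,j}}$ is in hand, since it reduces the incremental update to the same successive-truncation bookkeeping already proven for the static \texttt{BHT-l2r} algorithm.
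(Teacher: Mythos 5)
Your proposal is correct, and its core — part (i) — is essentially the paper's own proof of this theorem: you write the unfolding $\bar{C}^{k}_{\ell,(j)}$ as the part captured by the old basis $U^{k-1}_{\ell,j}$, the part captured by the appended directions $U^{k}_{R_{\ell,j}}$, and the truncation residue $E_{R_{\ell,j}}$; you observe that the expanded core satisfies $\bar{C}^{k}_{\ell,(j)}-U^{k}_{\ell,j}(U^{k}_{\ell,j})^{T}\bar{C}^{k}_{\ell,(j)}=E_{R_{\ell,j}}$ (your algebra here is right, since $U^{k}_{R_{\ell,j}}\perp U^{k-1}_{\ell,j}$ and $(U^{k}_{R_{\ell,j}})^{T}E_{R_{\ell,j}}=0$); and you then invoke \Cref{thm:layerwise_approx_error} and \Cref{thm:total_approx_error} with $\varepsilon_{nw}=\varepsilon_{des}/\sqrt{2d-2}$, exactly as the paper does. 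The difference is one of scope rather than technique: the paper's proof of this theorem treats \emph{only} the error incurred on the newly streamed batch $\mathcal{Y}^{k}$ (it does not even spell out the trivial skip branch, which you correctly dispatch via \Cref{thm:reconstruction_norm_equality}), and it defers the preservation of earlier batches to a separate result, \Cref{thm:hit_past_stream}, whose proof is precisely the zero-padding/block-decoupling argument you sketch in part (ii) — tracking which successor mode is padded by \eqref{eq:incremental_zero_padding1} and showing the new rank directions annihilate against the old root slices layer by layer. So what you anticipated as the main obstacle is in fact already proven in the paper, just factored out; your induction over $k$ merges the two theorems (plus the orthogonal decomposition of the accumulation error along the batch mode) into a single self-contained guarantee for $\mathcal{X}^{k}$, which is a legitimate and arguably cleaner packaging of the same ingredients, at the cost of re-proving inside one argument what the paper modularizes.
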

        \begin{proof}
            The proof uses \Cref{thm:total_approx_error} and is direct.
            Let $\bar{\mathcal{C}}^{k}_{\ell}$ (for any $\ell=p,\dots,1$ and $\bar{\mathcal{C}}_{p}=\mathcal{Y}^{k}$) be the intermediate tensor obtained at the $\ell$-th layer while processing the streamed tensor $\mathcal{Y}^{k}$ by \ouralgorithm.
            After updating $\core^{k-1}_{\ell,j}$ to $\core^{k}_{\ell,j}$ using $\bar{\mathcal{C}}^{k}_{\ell}$ and 
            \Cref{eq:compute_residual,eq:svd_on_residual,eq:concatenating_orthonormal_vectors}, we can write $\bar{\mathcal{C}}^{k}_{\ell}$ as
            \begin{equation}\label{thm:hit_approx_error_eq:composition_of_error}
                \bar{C}^{k}_{\ell,(j)}
                =U^{k-1}_{\ell,j}(U^{k-1}_{\ell,j})^{T}\bar{C}^{k}_{\ell,(j)}+U^{k}_{R_{\ell,j}}\Sigma^{k}_{R_{\ell,j}}(V^{k}_{R_{\ell,j}})^{T}+E_{R_{\ell,j}},
            \end{equation}
            where $U^{k-1}_{\ell,j}$ is the reshaped core $\core^{k-1}_{\ell,j}$. The first term on the RHS is the part of $\bar{\mathcal{C}}^{k}_{\ell}$ that is already represented in $\mathcal{X}^{k-1}_{\hierarchical}$, and the second term is the part represented due to the added basis vectors $U^{k}_{R_{\ell,j}}$.
            If we consider the updated core $U^{k}_{\ell,j}=\mat{U^{k-1}_{\ell,j} & U^{k}_{R_{\ell,j}}}$, we then can manipulate \eqref{thm:hit_approx_error_eq:composition_of_error} to resemble \Cref{thm:layerwise_approx_error} as
            \begin{equation*}
                \bar{C}^{k}_{\ell,(j)}=U^{k}_{\ell,j}(U^{k}_{\ell,j})^{T}\bar{C}^{k}_{\ell,(j)}+E_{R_{\ell,j}}= U^{k}_{\ell,j}\Sigma^{k}_{\ell,j}(V^{k}_{\ell,j})^{T}+E_{R_{\ell,j}},
            \end{equation*}
            where the error in the approximation of $\bar{\mathcal{C}}^{k}_{\ell}$ is given by $E_{\ell,j}$.
            This allows us to compute an upper bound on the error incurred for each HT-core on the $\ell$-th layer using \Cref{thm:layerwise_approx_error} and accumulate the layerwise approximation upper bounds to obtain a total approximation error using \Cref{thm:total_approx_error}.
        \end{proof}

        Since we have shown that we can guarantee an upper bound on the approximation error of the \ouralgorithm~algorithm, now the last step is to show that the updates performed by \ouralgorithm~do not affect the representation of the tensors that were streamed in the past. This is shown in the following theorem.
        \begin{theorem}[Error guarantees for the past stream]\label{thm:hit_past_stream}
        Let $\mathcal{X}^{k}_{m} \in \mathbb{R}^{n_{1} \times \cdots \times n_{d}}$ be the $m$-th tensor in the accumulation $\mathcal{X}^{k}$ at time $k$, and $\hierarchical_{\mathcal{X}^{k}}$ be the hierarchical Tucker approximation computed using \Cref{alg:HIT} (\ouralgorithm) with cores $\core^{k}_{\ell,j}$. For any $t \geq k$, the reconstruction of the $m$-th tensor from $\hierarchical_{\mathcal{X}^{t}_{m}}$, denoted $\tilde{\mathcal{X}}^{t}_{m}$, equals the reconstruction from $\hierarchical_{\mathcal{X}^{k}_{m}}$, i.e., $\tilde{\mathcal{X}}^{k}_{m} = \tilde{\mathcal{X}}^{t}_{m}$.
        \end{theorem}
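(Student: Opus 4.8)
The plan is to reduce the claim for arbitrary $t \geq k$ to a single increment and then chain the resulting equalities: if I can show $\tilde{\mathcal{X}}^{s}_{m} = \tilde{\mathcal{X}}^{s+1}_{m}$ for every $s$ with $k \leq s < t$ and every $m \leq \cumuldim^{s}$, transitivity gives $\tilde{\mathcal{X}}^{k}_{m} = \tilde{\mathcal{X}}^{t}_{m}$. So I would fix $m$ and compare the root-to-leaves reconstruction \eqref{eq:htucker_reconstruction} of the $m$-th latent slice under the update $\bm{\core}^{s} \to \bm{\core}^{s+1}$. If the projection test in \Cref{alg:HIT} passes (the skip branch), all non-root cores are untouched and the root only receives freshly appended frontal slices, so the $m$-th slice and every core are literally unchanged and the equality is immediate; the substantive case is when cores are expanded.

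The argument rests on two structural facts about the expansion. First, the final update $\mathcal{G}^{s+1}_{0,1} = \bar{\mathcal{G}}^{s}_{0,1} \pad^{3} \bar{\mathcal{C}}^{s+1}_{1}$ leaves the $m$-th frontal slice equal to $\mathcal{G}^{s}_{0,1}(:,:,m)$ embedded into the enlarged block $r^{s+1}_{1,1} \times r^{s+1}_{1,2}$ with zeros in the new rows and columns, since $\bar{\mathcal{G}}^{s}_{0,1}$ is the zero-padding of $\mathcal{G}^{s}_{0,1}$ forced by the rank growth of its layer-$1$ successors. Second, each core update \eqref{eq:concatenating_orthonormal_vectors} appends columns only in the parent-facing direction, $U^{s+1}_{\ell,j} = \mat{U^{s}_{\ell,j} & U^{s+1}_{R_{\ell,j}}}$, while \texttt{padWithZeros} (cf.\ \eqref{eq:incremental_zero_padding1}) enlarges the successor-facing modes of the old block $U^{s}_{\ell,j}$ purely with zeros. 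Notably, orthonormality plays no role here --- only this append-and-zero-pad bookkeeping is used.

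With these facts in hand I would induct down the tree on the partially contracted reconstruction. Let $\mathcal{Z}^{s}_{\ell}$ be the tensor obtained after contracting the $m$-th root slice with all cores on layers $1,\dots,\ell$. I claim $\mathcal{Z}^{s+1}_{\ell}$ equals $\mathcal{Z}^{s}_{\ell}$ embedded with zeros along every still-open rank mode, with physical modes already produced by contracted leaves agreeing exactly. The base case $\ell = 0$ is the first structural fact. For the inductive step, contracting an open mode $r_{\ell,j}$ against $\core^{s+1}_{\ell,j}$ annihilates the appended columns $U^{s+1}_{R_{\ell,j}}$ against the zero block of $\mathcal{Z}^{s+1}_{\ell-1}$, while the surviving block contracts against $U^{s}_{\ell,j}$ to reproduce $\mathcal{Z}^{s}_{\ell}$, the new zeros in the successor modes being supplied by the \texttt{padWithZeros} step; leaf cores are handled identically, emitting their fixed physical mode with no residual padding. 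At $\ell = p$ no rank modes remain, so $\mathcal{Z}^{s+1}_{p} = \mathcal{Z}^{s}_{p}$, i.e.\ $\tilde{\mathcal{X}}^{s+1}_{m} = \tilde{\mathcal{X}}^{s}_{m}$.

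The hard part will be the index bookkeeping that makes the annihilation step rigorous: I must verify that the columns appended to $\core^{s+1}_{\ell,j}$ in its parent mode occupy exactly the coordinates that meet the zero-padded block of $\mathcal{Z}^{s+1}_{\ell-1}$, and that the zeros inserted by \texttt{padWithZeros} into the parent core land precisely in the successor coordinates the next layer will treat as new. Establishing this requires tracking the index-set updates \eqref{eq:ht_index_set_construction} and \eqref{eq:ht_tucker_core_reshape} through the reshapes that toggle each transfer core between its three-dimensional and matricized views, confirming that column concatenation and row zero-padding commute with \reshape\ as the inductive step assumes. Once that alignment is nailed down, every annihilation is automatic and the induction closes.
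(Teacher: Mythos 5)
Your proposal is correct and follows essentially the same route as the paper's proof: both rest on the facts that the $m$-th root slice is a zero-padded embedding of its old value and that the zero-padding of parent cores kills the successor-facing entries of old columns, so that in the root-to-leaves reconstruction every index beyond the old ranks annihilates, with the argument closed by induction down the layers. The only structural difference is cosmetic --- you chain single-increment equalities $s \to s+1$ while the paper writes the element-wise contraction sums directly for arbitrary $t > k$; your invariant on $\mathcal{Z}^{s}_{\ell}$ is exactly the paper's observation that the intermediate tensors $\tilde{\mathcal{C}}^{t,m}_{\ell}$ vanish at any index exceeding the time-$k$ ranks.
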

        \begin{proof}
            The proof simply demonstrates that the core modifications performed by \ouralgorithm~have no impact on the representation of earlier tensors and is similar to \cite[Theorem 4]{aksoy2024incremental}.
            To demonstrate $\tilde{\mathcal{X}}^{k}_{m} = \tilde{\mathcal{X}}^{t}_{m}$, we need to reconstruct $\hierarchical_{\mathcal{X}^{t}_{m}}$.
            Let $\bar{C}^{k,m}_{1}\in\reals^{r^{k}_{1,1}\times r^{k}_{1,2}}$ be the slice of the root core $\bar{\mathcal{C}}^{k}_{1}$ that corresponds to $\mathcal{X}^{k}_{m}$.
            For any $t>k$, $\bar{C}^{k,m}_{1}$ gets padded with zeros according to  \Cref{eq:incremental_zero_padding1} and becomes
            \begin{equation}\label{thm:hit_past_stream_eq:root_slice}
                \bar{C}^{t,m}_{1}=\mat{\bar{C}^{k,m}_{1} & \mathbf{0} \\ \mathbf{0} & \mathbf{0}},
            \end{equation}
            with shape $r^{t}_{1,1}\times r^{t}_{1,2}$.
            
            Since the reconstruction happens in the root-to-leaf direction, we use the $\bar{C}^{t,m}_{1}$ from \eqref{thm:hit_past_stream_eq:root_slice} and contract with the HT-cores on the first level as $\tilde{\mathcal{C}}^{t,m}_{1}=\bar{C}^{t,m}_{1}~\tenprod{1}{3}~\core^{t}_{1,1}~\tenprod{3}{3}~\core^{t}_{1,2}$.
            As a result, we obtain $\tilde{\mathcal{C}}^{t,m}_{1}\in\reals^{r^{t}_{2,1}\times r^{t}_{2,2} \times r^{t}_{2,3} \times r^{t}_{2,4}}$ is the intermediate core of layer 1.
            After the contraction, we compute the $(i_{1},i_{2},i_{3},i_{4})$-th element of $\tilde{\mathcal{C}}^{t,m}_{1}$ as
            \begin{equation}\label{thm:hit_past_stream_eq:layer_1_elementwise_reconstruction}
                \tilde{\mathcal{C}}^{t,m}_{1}(i_{1},i_{2},i_{3},i_{4})=\sum_{\alpha=1}^{r^{t}_{1,1}}\sum_{\beta=1}^{r^{t}_{1,2}} \bar{C}^{t,m}_{1}(\alpha,\beta)\core^{t}_{1,1}(i_{1},i_{2},\alpha)\core^{t}_{1,2}(i_{3},i_{4},\beta).
            \end{equation}
            However, since $\bar{C}^{t,m}_{1}(\alpha,\beta)=0~ \forall \alpha>r^{k}_{1,1},~ \beta>r^{k}_{1,2}$, we can rewrite \Cref{thm:hit_past_stream_eq:layer_1_elementwise_reconstruction} as
            \begin{equation*}
                \tilde{\mathcal{C}}^{t,m}_{1}(i_{1},i_{2},i_{3},i_{4})=\sum_{\alpha=1}^{r^{k}_{1,1}}\sum_{\beta=1}^{r^{k}_{1,2}} \bar{C}^{k,m}_{1}(\alpha,\beta)\core^{t}_{1,1}(i_{1},i_{2},\alpha)\core^{t}_{1,2}(i_{3},i_{4},\beta).
            \end{equation*}
            Next, we consider the padding of the contracted cores.
            Since cores on layer $\ell$ are padded with zeros to ensure dimensional consistency after updating layer $\ell+1$ as in \Cref{eq:incremental_zero_padding1}, the entries of $\core^{t}_{1,1}(i_{1},i_{2},:r^{k}_{1,1})$ and $\core^{t}_{1,2}(i_{3},i_{4},:r^{k}_{1,2})$ are all zero for any $i_{j} > r^{k}_{2,j}$, where $\core^{t}_{\ell,j}(\alpha,\beta,:r^{k}_{\ell,j})$ is the mode-3 fiber of $\core^{t}_{\ell,j}$ at position $(\alpha,\beta)$ with depth $r^{k}_{\ell,j}$. Therefore, $\tilde{\mathcal{C}}^{t,m}_{1}(i_{1},i_{2},i_{3},i_{4})=0$ for any $i_{j}>r^{k}_{2,j}$. For a full dimension tree this results in 
            \begin{equation}\label{thm:hit_past_stream_eq:reconstruction_layer_2}
                \begin{aligned}
                \tilde{\mathcal{C}}^{t,m}_{2}(i_{1},\dots,i_{8})= &\sum_{\alpha=1}^{r^{t}_{2,1}}\sum_{\beta=1}^{r^{t}_{2,2}}\sum_{\gamma=1}^{r^{t}_{2,3}}\sum_{\theta=1}^{r^{t}_{2,4}} \tilde{\mathcal{C}}^{t,m}_{1}(\alpha,\beta,\gamma,\theta)\core^{t}_{2,1}(i_{1},i_{2},\alpha)\cdots\core^{t}_{4,1}(i_{7},i_{8},\alpha),\\
                =& \sum_{\alpha=1}^{r^{k}_{2,1}}\sum_{\beta=1}^{r^{k}_{2,2}}\sum_{\gamma=1}^{r^{k}_{2,3}}\sum_{\theta=1}^{r^{k}_{2,4}} \tilde{\mathcal{C}}^{k,m}_{1}(\alpha,\beta,\gamma,\theta)\core^{t}_{2,1}(i_{1},i_{2},\alpha)\cdots\core^{t}_{4,1}(i_{7},i_{8},\alpha).
                \end{aligned}
            \end{equation}
            for the second layer. 
            This process is repeated for layers $\ell=3,\dots,p-1$. Then for the last layer $p$, the reconstruction is performed as
            \begin{equation}
                \begin{aligned}\label{thm:hit_past_stream_eq:reconstruction_last_layer}
                \tilde{\mathcal{X}}^{t}_{m}=\tilde{\mathcal{C}}^{t,m}_{p}(i_{1},\dots,i_{d})= &\sum_{\alpha=1}^{r^{t}_{p,1}}\cdots\sum_{\zeta=1}^{r^{t}_{p,d}} \tilde{\mathcal{C}}^{t,m}_{p-1}(\alpha,\dots,\zeta)\core^{t}_{p,1}(i_{1},\alpha)\cdots\core^{t}_{p,d}(i_{d},\zeta),\\
                =&\sum_{\alpha=1}^{r^{k}_{p,1}}\cdots\sum_{\zeta=1}^{r^{k}_{p,d}} \tilde{\mathcal{C}}^{k,m}_{p-1}(\alpha,\dots,\zeta)\core^{t}_{p,1}(i_{1},\alpha)\cdots\core^{t}_{p,d}(i_{d},\zeta).
                \end{aligned}
            \end{equation}
            \Cref{thm:hit_past_stream_eq:layer_1_elementwise_reconstruction,thm:hit_past_stream_eq:reconstruction_layer_2,thm:hit_past_stream_eq:reconstruction_last_layer} all use the ranks up to $r^{k}_{\ell,j}$ while reconstructing $\tilde{\mathcal{X}}^{t}_{m}$. This shows that for any $t\geq k$, the reconstruction of the $m$-th tensor in the accumulation $\hierarchical_{\mathcal{X}^{t}}$ is the same as the reconstruction of the $m$-th tensor in the accumulation $\hierarchical_{\mathcal{X}^{k}}$. Therefore, the error guarantees of the \ouralgorithm~algorithm are preserved for the past stream.
        \end{proof}

    \section{Supplementary Algorithms}\label{app:supplementary_algorithms}
        This section contains pseudocodes for the supplementary algorithms that are used in \Cref{,alg:batch_htucker,alg:HIT}.
        \begin{itemize}
            \item \texttt{updateIndexSet}:
            During the update process with \ouralgorithm~(\Cref{alg:HIT}), it is important to keep track of the added basis vectors to each core. To ensure dimensional consistency throughout the update process, the \texttt{updateIndexSet} function~(\cref{alg:updateIndexSet}) updates the index sets corresponding to layers and nodes. 
        \begin{algorithm}
            \centering
            \caption{\texttt{updateIndexSet}: Updates the index set of a given object}
            \label{alg:updateIndexSet}
            \begin{algorithmic}[1]
                \Input
                \Desc{$\indexset_{\Gamma_{\alpha}}$}{index set of the object $\Gamma$}
                \Desc{$\Gamma_{\alpha}$}{object to update the index set}
                \EndInput
                \Output
                    \Desc{$\indexset_{\Gamma_{\alpha}}$}{updated index set}
                \EndOutput
                \If{$\Gamma_{\alpha}$ is a node}\Comment{\small$\alpha$ is a tuple with layer and node indices}
                    \State $\ell,j \gets \alpha$
                \State $\indexset_{\Gamma_{\alpha}} \gets \begin{cases}
                    \{n_{d_{\ell,j}}, r_{\ell,j}\} & \text{if $\Gamma_{\alpha}$ is a leaf}\\
                    \left\{\displaystyle{\bigcup_{(\ell+1,p) \in \successors_{\ell,i}}}\rank_{\ell+1,p}\right\}\cup \{{\rank_{\ell, i}}\}& \text{if $\Gamma_{\alpha}$ is a transfer node.}
                \end{cases}$
                \ElsIf{$\Gamma$ is a layer} \Comment{\small$\alpha$ is the layer index}
                    \State $\indexset_{\Gamma_{\alpha}}\gets
                    \bigcup_{j=1}^{|\Gamma_{\alpha}|}
                      \begin{cases}
                          n_{d_{\alpha,j}} & \text{if $\Gamma_{\alpha,j}$ is a leaf}\\ 
                          \displaystyle{\prod_{(\alpha+1,m) \in \successors_{\alpha,j}}} \rank_{\alpha+1,m} & \text{if $\Gamma_{\alpha,j}$ is a transfer node.}
                      \end{cases}$ \Comment{\small$\Gamma_{\alpha,j}$ is the $j$-th node on layer $\alpha$}
                \EndIf
            \end{algorithmic}
        \end{algorithm}

            \item \texttt{expandCore}:
            Once the missing basis vectors of a core are identified, then the next step is to merge the missing basis vectors with the existing ones stored in a Tucker core. The \texttt{expandCore}~(\Cref{alg:expandCore}) algorithm takes in a node, along with its dimension tree, existing Tucker core and the new basis vectors corresponding to that node and updates the core.
            \begin{algorithm}[h!]
                \caption{\texttt{expandCore}: Expands the basis of a Tucker core}
                \label{alg:expandCore}
                \begin{algorithmic}
                    \Input
                        \Desc{$\node_{\ell,j}$}{node whose core is to be expanded}
                        \Desc{$\indexset_{\node_{\ell,j}}$}{index set of the node}
                        \Desc{$\core_{\ell,j}$}{core to be expanded}
                        \Desc{$U$}{$\rank_{U}$ orthogonal vectors to be appended to $\core_{\ell,j}$}
                    \EndInput
                    \Output
                        \Desc{$\core_{\ell,j}$}{\textit{updated} core}
                        \Desc{$\indexset_{\node_{\ell,j}}$}{\textit{updated} index set}
                    \EndOutput
                    \If{$\node_{\ell,j}$ is a leaf node}
                        \State $\core_{\ell,j}\gets\mat{\core_{\ell,j} & U}$ \Comment{\small $\core_{\ell,j}$ is already in orthonormal matrix form, we can directly append $U$ to $\core_{\ell,j}$}
                        \State $\indexset_{\node_{\ell,j}}\gets \texttt{updateIndexSet}(\indexset_{\node_{\ell,j}},\node_{\ell,j})$ \Comment{\small Update the index set of the node to reflect the new rank}
                    \ElsIf{$\node_{\ell,j}$ is a transfer node}
                        \State $\alpha\gets\frac{1}{\rank_{\ell,j}}\prod_{\gamma\in\indexset_{\node_{\ell,j}}}\gamma$
                        \State $\core_{\ell,j}\gets\reshape(\core_{\ell,j},[\alpha,\rank_{\ell,j}])$ \Comment{\small$\core_{\ell,j}$ is reshaped to an orthonormal matrix before appending $U$}
                        \State $\core_{\ell,j}\gets\mat{\core_{\ell,j} & U}$
                        \State $\indexset_{\node_{\ell,j}}\gets \texttt{updateIndexSet}(\indexset_{\node_{\ell,j}},\node_{\ell,j})$ \Comment{\small Update the index set of the node to reflect the new rank}
                        \State $\core_{\ell,j}\gets\reshape(\core_{\ell,j}, I_{\node_{\ell,j}})$ \Comment{\small Fold $\core_{\ell,j}$ back to 3D core}
                    \EndIf
                \end{algorithmic}
            \end{algorithm}
        \item \texttt{padWithZeros}:
            The \texttt{padWithZeros} algorithm is used to pad a Tucker core with zeros to ensure dimensional consistency after updating the core. The algorithm is presented in \Cref{alg:padWithZeros}.
            \begin{algorithm}[h!]
                \caption{\texttt{padWithZeros}: Pad a hierarchical Tucker core with zeros}
                \label{alg:padWithZeros}
                \begin{algorithmic}[1]
                \Input
                    \Desc{$\core_{\ell,j}\in\reals^{\alpha\times\beta\times r}$}{HT-core to be padded}
                    \Desc{$t$}{index of the node to be padded among the children of its parent node}
                    \Desc{$r^{\prime}$}{size of the zeros to be padded}
                \EndInput
                \Output
                    \Desc{$\core_{\ell,j}$}{padded HT-core}
                \EndOutput
                \State $\mathbf{0} \in \reals^{r^{\prime}\times\beta\times r}$ \Comment{\small Assume $t=1$ for simplicity}
                \State $\core_{\ell,j}\gets \core_{\ell,j} \pad^{t} \mathbf{0}$ \Comment{\small$\core_{\ell,j} \in \reals^{(\alpha+r^{\prime}\times\beta\times r)}$}
                \end{algorithmic}
            \end{algorithm}
        \end{itemize}

            \section{Alternative approach to determine an upper bound to the approximation error \texorpdfstring{$\|E_{\ell,j}\|_{F}$}{}} \label{app:alt_error}            
            
        In this section, we propose an alternative method to determine an upper bound to the approximation error $\|E_{\ell,j}\|_{F}$ for \texttt{BHT-l2r} and \ouralgorithm~algorithms. 
        
        Since all the operations in both algorithms are projections onto truncated orthogonal bases, we can compute the error of approximation directly by comparing the norm of an intermediate tensor $\mathcal{C}_{\ell}$ to the original $d$-dimensional tensor $\mathcal{Y}$. This allows us to update the nodewise error tolerance $\varepsilon_{nw}$ adaptively throughout both algorithms. Let $\varepsilon_{abs}$ be the determined absolute error tolerance for the hierarchical Tucker approximation. Both \texttt{BHT-l2r} and \ouralgorithm~will perform $(2d-2)$ \texttt{SVD}s to either compute a hierarchical Tucker representation or update an existing one. At the beginning of both algorithms, the nodewise error tolerance $\varepsilon_{nw}$ is set to $\varepsilon_{abs}/\sqrt{2d-2}$.

        Once the computation of the $\ell$-th layer is complete and $\mathcal{Y}$ is projected onto $\core_{\ell,j}$, we can track the current error of approximation. Let $\bar{\mathcal{C}}_{\ell}$ be that approximation. Then, the current error of approximation is then computed as
        \begin{equation}
            \varepsilon_{\ell}=\sqrt{\|\mathcal{Y}\|^{2}_{F}-\|\bar{\mathcal{C}}_{\ell}\|^{2}_{F}},
        \end{equation}
        where $\varepsilon_{\ell}$ denotes the error of approximation at the $\ell$-th layer. To update the nodewise error tolerance $\varepsilon_{nw}$ for the upcoming computations, we define an error budget $\varepsilon_{rem}$ as
        \begin{equation}
            \varepsilon_{rem}=\sqrt{\varepsilon_{abs}^{2}-\varepsilon_{\ell}^{2}}.
        \end{equation}
        In addition to updating the remaining error budget, we also update the nodewise error tolerance $\varepsilon_{nw}$ accordingly as
        \begin{equation}
            \varepsilon_{nw}=\frac{\varepsilon_{rem}}{\sqrt{\texttt{svd}_{rem}}},
        \end{equation}
        with $\texttt{svd}_{rem}$ being the number of remaining \texttt{SVD}s to be performed.
        
        This approach aims to compensate for the actual error of approximation incurred at each layer being lower than the desired approximation error as shown in \Cref{thm:bht_leaves_to_root} and \Cref{thm:hit_approx_error}. By updating the nodewise error tolerance $\varepsilon_{nw}$ adaptively, we can get progressively more aggressive in the truncation of the Tucker cores as we move up the dimension tree. This approach is particularly useful when the original tensor $\mathcal{Y}$ is comprised of many small dimensions.
    \section{Additional results}\label{app:additional_results}
        This section contains the additional results for the experiments conducted in \Cref{sec:experiments}. In addition to those, this section further presents the comparison of the batch hierarchical Tucker format and the hierarchical Tucker format.

        \subsection{Comparison of the batch hierarchical Tucker format and the hierarchical Tucker format}\label{app:ht_vs_bht}
            
            This section considers the comparison of the batch hierarchical Tucker format and the hierarchical Tucker format. We compute the approximations in batch hierarchical Tucker  format (\Cref{fig:htucker_batch_new}) using the \texttt{BHT-l2r} algorithm (\Cref{alg:batch_htucker}) and the hierarchical Tucker format (\Cref{fig:htucker_as_node}) using the leaf-to-root compression algorithm of~\citet[Alg.~2]{grasedyck2010hierarchical}. We ran all algorithms until they run out of the allocated 64GB memory. Please note that in the experiments in this section do not include any incremental updates as the algorithms compared here are both one-shot algorithms. At every batch size, we compute the compression using the respective formats and algorithms from scratch.
            
            We conduct our experiments using the PDEBench dataset with $\varepsilon_{rel}=0.10$ and $0.05$ as well as the BigEarthNet dataset with $\varepsilon_{rel}=0.05$, $0.10$, $0.15$, and $0.30$ with the same reshapings as in \Cref{sec:experiments}. For all scenarios, we compare compression ratio and compression time of both algorithms. Analogous to other experiments, the results are averaged over 5 seeds.
            When computed with \texttt{BHT-l2r}, our proposed batch hierarchical Tucker format results in a significant reduction in compression time in comparison to the regular hierarchical Tucker format computed with the leaves-to-root decomposition algorithm presented in~\citet[Alg. 2]{grasedyck2010hierarchical} while returning better compression ratios.
            \Cref{fig:ht_vs_bht_pdebench_010,fig:ht_vs_bht_pdebench_005}, and \Cref{fig:ht_vs_bht_bigearthnet} show the results for the PDEBench and BigEarthNet datasets, respectively.
            
            \Cref{fig:ht_vs_bht_pdebench_010} shows the results of the comparison between HT format and BHT format in CR and compression time for $\varepsilon_{rel}=0.10$. Note that the batch hierarchical Tucker format achieves higher compression in comparison to their hierarchical Tucker format version using the same normalization. At this target relative error level, both algorithms compress the same amount of simulations except for HT format with z-score normalization. Another intersting observation from \Cref{fig:ht_vs_bht_pdebench_010} is that the BHT format results in faster compression irrespective of the normalization method used.
            
            \begin{figure}[htbp]
                \centering
                    \includegraphics[width=\columnwidth]{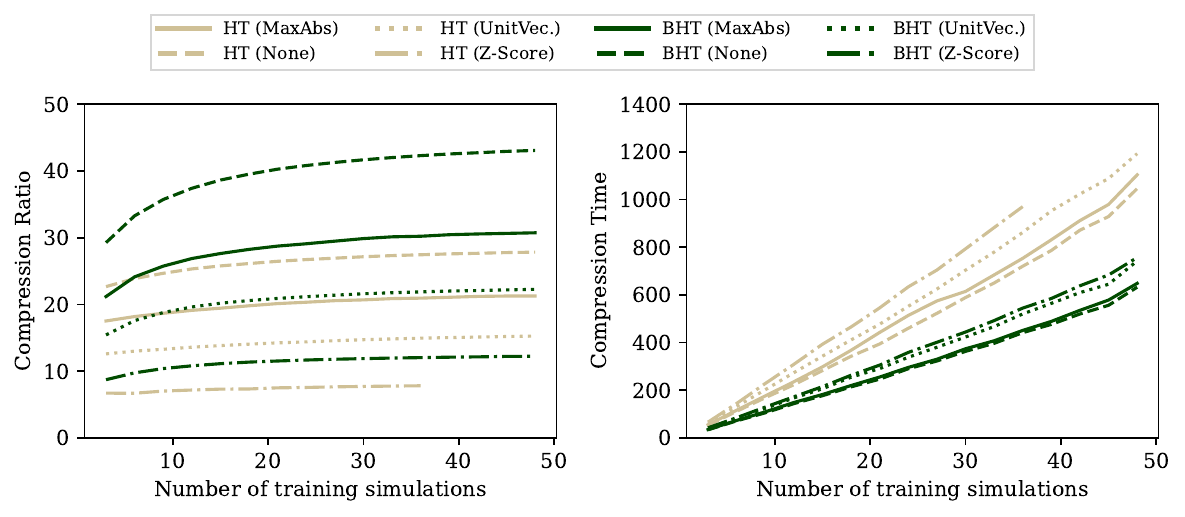}
                    \caption{\small\emph{Comparison of hierarchical Tucker leaf-to-root compression and batch hierarchical Tucker decomposition in terms of compression ratio~(CR - left) and compression time~(right) on the PDEBench 3D Navier-Stokes dataset with $\varepsilon_{rel}=0.10$ using various normalization methods. For the same normalization methods, the batch hierarchical Tucker format achieves on average $1.5\times$ compression over the hierarchical Tucker format, while running on average $1.7\times$ faster. The batch hierarchical Tucker format compresses 12 simulations more than the hierarchical Tucker format using z-score normalization. All experiments are run until failure due to insufficient memory. 
                    MaxAbs: Maximum absolute value normalization, None: No normalization, UnitVec: Unit vector normalization, ZScore: Z-score normalization. The results are averaged over 5 seeds.}}
                    \label{fig:ht_vs_bht_pdebench_010}
            \end{figure}

            \Cref{fig:ht_vs_bht_pdebench_005} shows the results of the comparison between HT format and BHT format in CR and compression time for $\varepsilon_{rel}=0.05$. Note that similar to the case of $\varepsilon_{rel}=0.10$, the BHT format offers higher compression and lower compression time compared to the HT format. This time, the efficiency difference between BHT and HT manifests itself in the compression of more simulations. The BHT format compresses at least 6 simulations more than the HT format across all normalization methods. In parallel to the results of $\varepsilon_{rel}=0.10$, the BHT format results in faster compression irrespective of the normalization method used.

            \begin{figure}[htbp]
                \centering
                \includegraphics[width=\columnwidth]{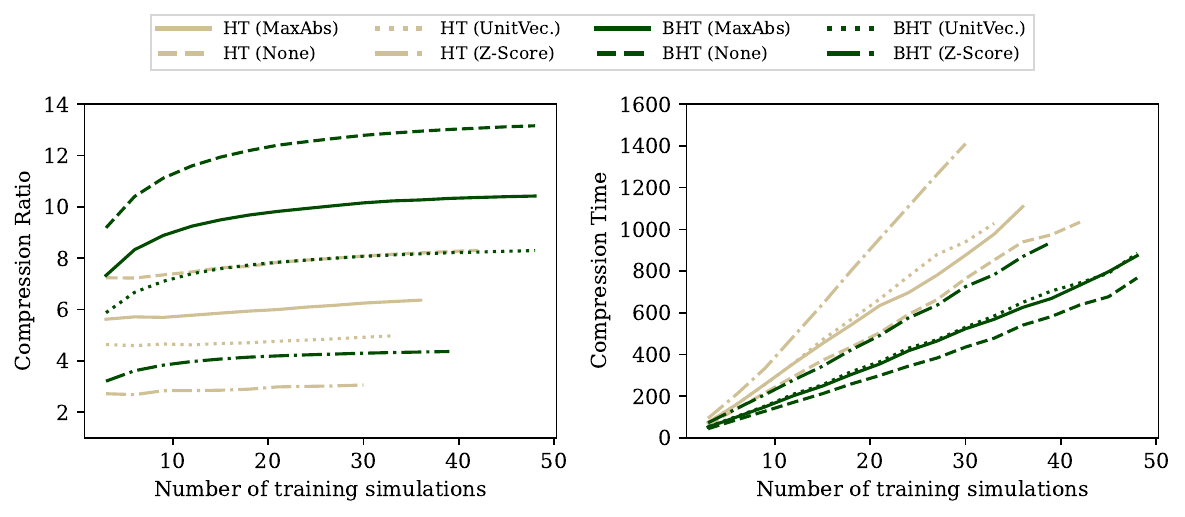}
                \caption{\small\emph{Comparison of HT format and BHT format in terms of compression ratio~(CR - left) and compression time~(right) on the PDEBench 3D Navier-Stokes dataset with $\varepsilon_{rel}=0.05$ using various normalization methods. For the same normalization methods, the BHT format achieves $1.4-1.6\times$ compression over the HT format, while running $1.6-1.9\times$ faster. The BHT format compresses at least 6 simulations more than the HT format at all normalization methods. All experiments are run until failure due to insufficient memory. Please refer to the caption of \Cref{fig:ht_vs_bht_pdebench_010} for the normalization methods.}}
                \label{fig:ht_vs_bht_pdebench_005}
            \end{figure}

            In addition to the 3D Navier-Stokes simualtions, we also conduct experiments on the BigEarthNet dataset. \Cref{fig:ht_vs_bht_bigearthnet} shows the results of the comparison between HT format and BHT format in CR and compression time for the BigEarthNet dataset.
            In line with our findings from the PDEBench dataset, the BHT format offers higher compression and lower compression time compared to the HT format, specifically at higher target relative error levels.
            At the higher target relative error level, $\varepsilon_{rel}=0.30$, the BHT format achieves $6.2\times$ compression over the HT format.
            The difference between HT and BHT formats become less pronounced as the target relative error level decreases. At $\varepsilon_{rel}=0.15$, the BHT format achieves only $1.5\times$ compression over the HT format.
            At the lower target relative error level, $\varepsilon_{rel}=0.10$, both formats yield comparable CRs with the HT format achieving $1.05\times$ compression over the BHT format.
            Finally at $\varepsilon_{rel}=0.05$, the HT format achieves $1.3\times$ compression over the BHT format.
            In terms of compression time, the BHT format consistently outperforms the HT format. Over the target relative errors investigated, the BHT format runs $1.86-3.86\times$ faster than the HT format.

            One interesting observation from \Cref{fig:ht_vs_bht_bigearthnet} is that despite the higher compression ratios achieved by the BHT format, the HT format compresses more images than the BHT format at $\varepsilon_{rel}=0.30$ and $\varepsilon_{rel}=0.15$. The BHT format compresses 700 images less than the HT format at $\varepsilon_{rel}=0.30$ and 100 images less than the HT format at $\varepsilon_{rel}=0.15$. However, once the HT format surpasses the BHT format in terms of CR, the BHT format compresses more images. The BHT format compresses 100 images more than the HT format at $\varepsilon_{rel}=0.10$ and 500 images more than the HT format at $\varepsilon_{rel}=0.05$.  

            \begin{figure}[htbp]
                \centering
                \includegraphics[width=\columnwidth]{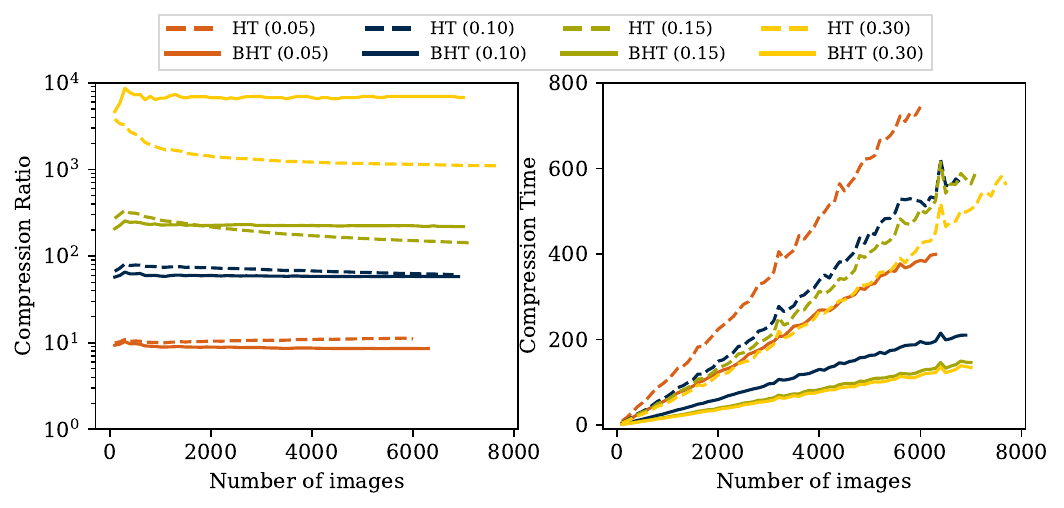}
                \caption{\small\emph{Comparison of hierarchical Tucker format and batch hierarchical Tucker format in terms of compression ratio~(CR - left) and compression time~(right) on the BigEarthNet dataset with $\varepsilon_{rel}=0.05-0.30$. The BHT format achieves $6.2\times$ compression over the HT format at $\varepsilon_{rel}=0.3$ while running $3.75\times$ faster. At $\varepsilon_{rel}=0.1$, the BHT format achieves $0.95\times$ compression over the HT format while running $2.69\times$ faster. The BHT format compresses 100 images more and 700 images less than HT format at $\varepsilon_{rel}=0.1$ and $\varepsilon_{rel}=0.3$, respectively. All experiments are run until failure due to insufficient memory.}}
                \label{fig:ht_vs_bht_bigearthnet}
            \end{figure}

        \subsection{Effect of the reshaping on compression performance}\label{app:reshaping}
            This section investigates the effect of reshaping on the compression performance of the \ouralgorithm~algorithm. We consider the PDEBench 3D turbulent Navier-Stokes dataset with $\varepsilon_{rel}=0.10$ and no normalization. We consider four different reshaping of the tensor. The baseline reshaping is denoted as $8\times8\times8\times8\times8\times8\times5\times21\times1$. We alter the dimensionality by reshaping the tensor. Note that we only change the dimensionality of the tensor by altering the dimensions that correspond to the spatial discretization of the simulation.
            \Cref{fig:pdebench_compression_reduction_reshaping_010} shows the compression time and relative test error of the algorithms using various reshaping. \Cref{fig:pdebench_time_validation_reshaping_010} shows the comparison of compression time and RTE of the algorithms using the considered reshapings. \Cref{tab:reshaping} presents the shapes of the tensors, and the resulting compression time, CR, RR, as well as RTE. For this type of experiments we set the maximum walltime limit to 4 days, similar to the experiments in \Cref{sec:pdebench_experiments}

            \begin{table}[htbp]
                \centering
                \caption{\small\emph{Results of experiments investigating the effect of reshaping on the compression performance of the \ouralgorithm~algorithm using the PDEBench 3D turbulent Navier-Stokes dataset with $\varepsilon_{rel}=0.10$. Time: Compression time in seconds, CR: Compression ratio, RR: Reeduction ratio, RTE: Relative test error. \wt~ denotes that the algorithm failed to compress the dataset due to maximum walltime limit (389/480 simulations).}}
                \label{tab:reshaping}
                \resizebox{\textwidth}{!}{
                \begin{tabular}{l | c | c | c | c}
                    Tensor Shape & Time(s) & CR & RR & RTE \\
                    \hline
                    $8\times8\times8\times8\times8\times8\times5\times21\times1$ & 1158.7 & 32.68 & 32.94 & 0.098\\
                    $64\times64\times64\times5\times21\times1$ & 5289.4\wt & 18.86 & 32.59 & 0.108 \\
                    $4\times4\times4\times4\times4\times4\times4\times4\times4\times5\times21\times1$ & 3563.5 & 33.59 & 33.87 & 0.098 \\
                    {\footnotesize $2\times2\times4\times2\times2\times4\times2\times2\times4\times2\times2\times4\times2\times2\times5\times21\times1$ }& 11,339 & 25.64 & 28.83 & 0.099 \\
                \end{tabular}
            }
            \end{table}

            In \Cref{fig:pdebench_compression_reduction_reshaping_010} we observe that the changing the dimensionality of the tensor has a significant impact on the CR. The $4\times4\times4\times4\times4\times4\times4\times4\times4\times5\times21\times1$ reshaping results in the highest CR~($33.59\times$) and RR~($33.87\times$) among the investigated reshaping. The $64\times64\times64\times5\times21\times1$ reshaping results in the lowest CR~($18.86\times$) and the reshaping $2\times2\times4\times2\times2\times4\times2\times2\times4\times2\times2\times4\times2\times2\times5\times21\times1$ results in the lowest
            RR~($28.83\times$). Furthermore, the $64\times64\times64\times5\times21\times1$ reshaping runs into maximum walltime timeout and therefore fails to compress the entire dataset. 

            \begin{figure}[htbp]
                \centering
                \includegraphics[width=\textwidth]{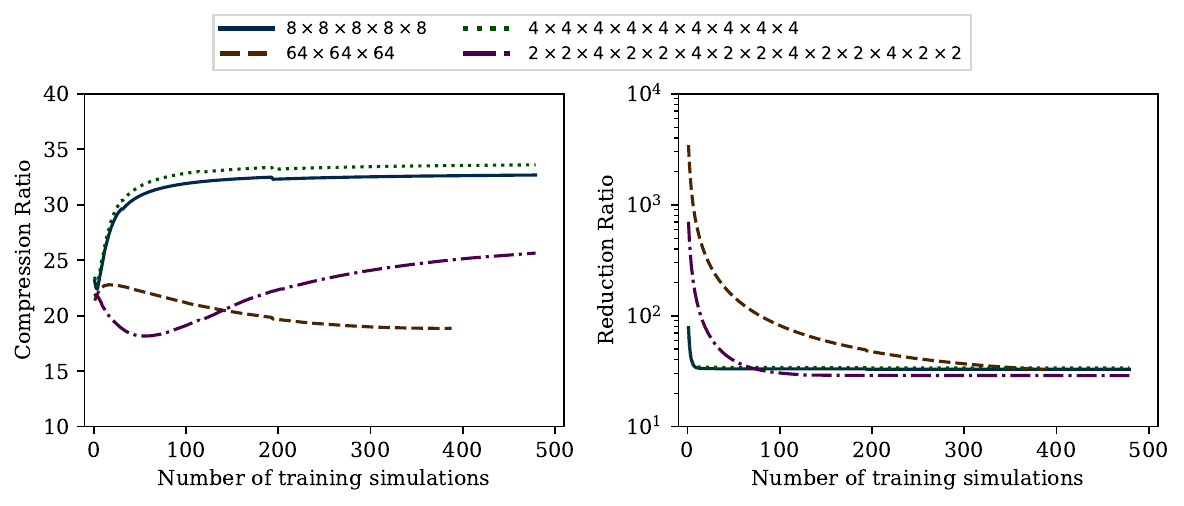}
                \caption{\small\emph{Comparison of various tensor reshapings in terms of compression ratio~(CR - left) and reduction ratio~(RR - right) on the PDEBench 3D Navier-Stokes dataset with $\varepsilon_{rel}=0.10$ using no normalization. The reshaping $4\times4\times4\times4\times4\times4\times4\times4\times4\times5\times21\times1$ results $1.8\times$ the CR of the worst performing reshaping, $64\times64\times64\times5\times21\times1$. Similarly, the reshaping $4\times4\times4\times4\times4\times4\times4\times4\times4\times5\times21\times1$ results in $1.2\times$ of the RR of the worst performing reshaping, $2\times2\times4\times2\times2\times4\times2\times2\times4\times2\times2\times4\times2\times2\times5\times21\times1$.}}
                \label{fig:pdebench_compression_reduction_reshaping_010}
            \end{figure}

            \Cref{fig:pdebench_time_validation_reshaping_010} shows that the baseline reshaping $8\times8\times8\times8\times8\times8\times5\times21\times1$ results in the lowest compression time among the investigated reshapings~($1158.7$s). The reshaping $2\times2\times4\times2\times2\times4\times2\times2\times4\times2\times2\times4\times2\times2\times5\times21\times1$ results in the highest compression time~($11,339$s). The reshaping $64\times64\times64\times5\times21\times1$ runs into maximum walltime timeout and therefore is not considered in this comparison. However, the growth in compression time suggests that it would have been the slowest reshaping among the investigated ones if the entire dataset was successfully compressed.

            Another observation from \Cref{fig:pdebench_time_validation_reshaping_010} is that all reshapings result in almost the same RTE~($0.098-0.099$) except for $64\times64\times64\times5\times21\times1$ reshaping. The $64\times64\times64\times5\times21\times1$ reshaping results in the highest RTE among the investigated reshapings~($0.108$). However, this is likekly due to the fact that the algorithm fails to compress the entire dataset due to maximum walltime limit. The effect of the reshaping on the RTE is at the number of simulations it takes to reduce the RTE below the target $\varepsilon_{rel}$. The $4\times4\times4\times4\times4\times4\times4\times4\times4\times5\times21\times1$ reshaping crosses the $\varepsilon_{rel}$ threshold at 12 simulations, whereas the $2\times2\times4\times2\times2\times4\times2\times2\times4\times2\times2\times4\times2\times2\times5\times21\times1$ reshaping crosses the $\varepsilon_{rel}$ threshold at 127 simulations.

            \begin{figure}[htbp]
                \centering
                \includegraphics[width=\textwidth]{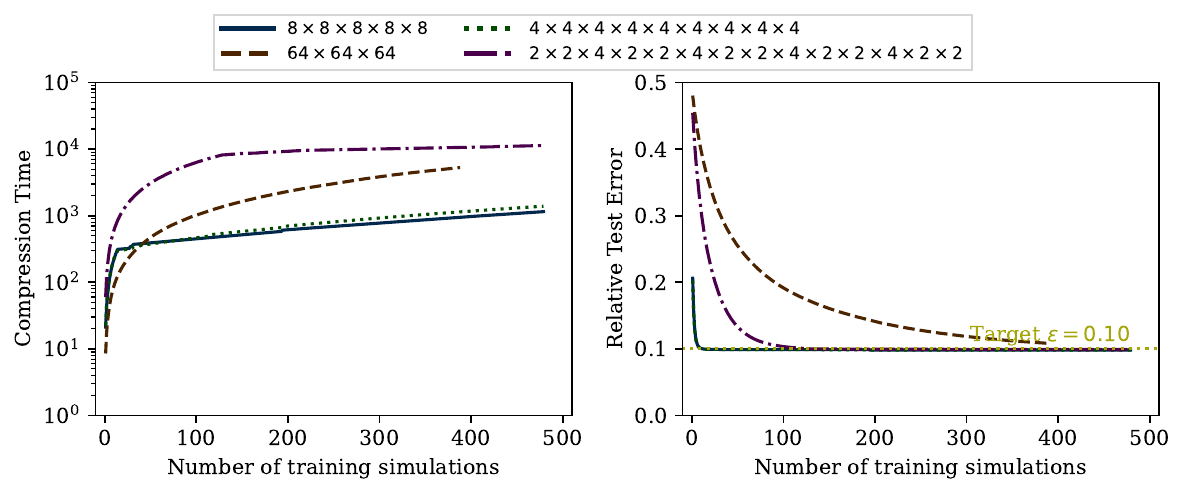}
                \caption{\small\emph{Comparison of various tensor reshapings in terms of compression time~(left) and relative test error~(RTE - right) on the PDEBench 3D Navier-Stokes dataset with $\varepsilon_{rel}=0.10$ using no normalization. The reshaping $2\times2\times4\times2\times2\times4\times2\times2\times4\times2\times2\times4\times2\times2\times5\times21\times1$ takes $9.8\times$ the time it takes the reshaping
                $8\times8\times8\times8\times8\times8\times5\times21\times1$ to compress the dataset. All reshapings except for $64\times64\times64\times5\times21\times1$ result in almost the same RTE.}}
                \label{fig:pdebench_time_validation_reshaping_010}
            \end{figure}
            
            \subsection{Effect of the axis ordering on compression performance}\label{app:axis_ordering}
            This section considers an empirical investigation of the effect of axis ordering on the compression performance of the \ouralgorithm~algorithm. 
            As the ordering of the axes determine the order of interatction between the dimensions, it is expected that the axis ordering has an effect on the compression performance of the \ouralgorithm~algorithm. We investigate the effect of axis ordering on the compression performance of the \ouralgorithm~algorithm using the PDEBench 3D turbulent Navier-Stokes dataset with $\varepsilon_{rel}=0.10$ and no normalization. We consider four different axis reorderings of the tensor. The baseline axis ordering is denoted as $[0,1,2,3,4,5,6,7,8]$. The other axis reorderings are obtained by permuting the indices of the tensor. \Cref{fig:pdebench_compression_reduction_transpose_010} shows the compression time and relative test error of the algorithms using various axis reorderings. \Cref{fig:pdebench_time_validation_transpose_010} shows the comparison of compression time and mean validation error of the algorithms using various axis reorderings.
            \Cref{tab:axis_ordering} presents the axis ordering, shapes of the corresponding tensors, and the resulting compression time, CR, RR, as well as RTE.

            \begin{table}[htbp]
                \centering
                \caption{\small\emph{Results of experiments investigating the effect of axis ordering on the compression performance of the \ouralgorithm~algorithm using the PDEBench 3D turbulent Navier-Stokes dataset with $\varepsilon_{rel}=0.10$. Axis orderings are given in the form of a permutation of the indices of the tensor. Two of the dimensions with magnitude 8 are emphasized to make different axis reorderings distinguishable. The investigated axis reorderings of the axes result in up to $1.72\times$ higher in compression time, CR and RR, while yielding almost the same RTE. Time: Compression time in seconds, CR: Compression ratio, RR: Reeduction ratio, RTE: Relative test error.}}
                \label{tab:axis_ordering}
                \resizebox{\textwidth}{!}{
                \begin{tabular}{l | c | c | c | c | c | c}
                   Name & Axis Ordering & Tensor Shape & Time(s) & CR & RR & RTE \\
                   \hline
                    Baseline & [0,1,2,3,4,5,6,7,8] & $8\times8\times \textbf{8} \times8\times\textit{8}\times8\times5\times21\times1$ & 1158.7 & 32.68 & 32.94 & 0.098 \\
                    Transpose A & [0,1,2,7,4,5,6,3,8] & $8\times8\times \textbf{8} \times21\times \textit{8} \times8\times5\times8\times1$ & 1306.7 & 26.24 & 26.41 & 0.098 \\
                    Transpose B & [0,4,2,7,1,5,6,3,8] & $8\times \textit{8} \times8\times21\times \textbf{8} \times8\times5\times8\times1$ & 1928.7 & 18.93 & 19.06 & 0.097 \\
                    Transpose C & [0,1,2,6,4,5,3,7,8] & $8\times8\times \textbf{8} \times5\times \textit{8} \times8\times8\times21\times1$ & 1992.9 & 25.59 & 26.21 & 0.098 \\
                \end{tabular}
                }
            \end{table}

            \Cref{fig:pdebench_compression_reduction_transpose_010} shows that the baseline axis ordering results in the highest CR and RR among the investigated axis reorderings. The baseline axis ordering achieves $32.68\times$ CR and $32.94\times$ RR, whereas the second best axis ordering, Transpose A, achieves $26.24\times$ CR and $26.41\times$ RR. CR and RR drop down to $18.93\times$ and $19.06\times$ for the worst performing axis ordering, Transpose B.

            \begin{figure}[htbp]
                \centering
                \includegraphics[width=\textwidth]{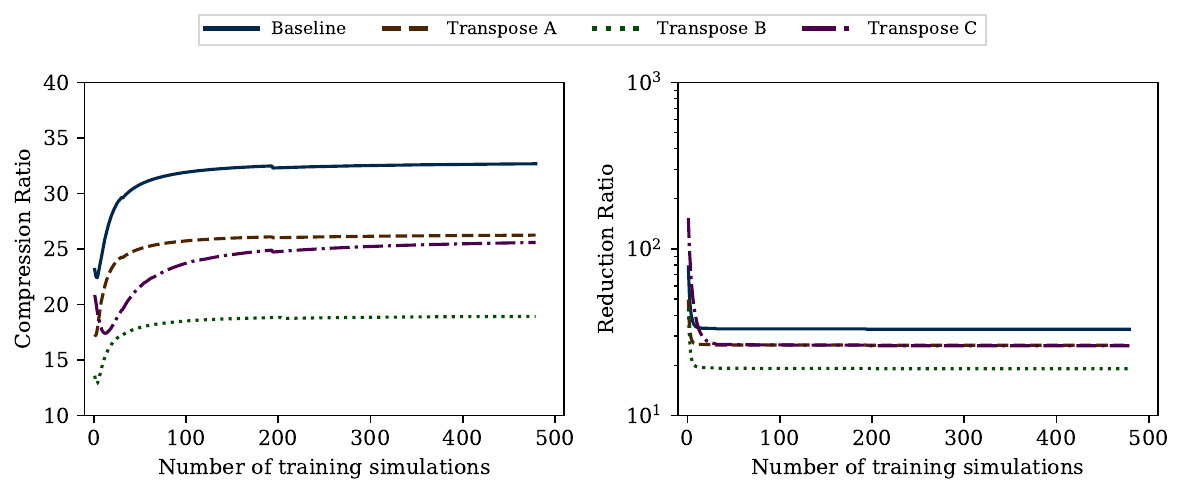}
                \caption{\small\emph{Comparison of various axis orderings in terms of compression ratio~(CR - left) and reduction ratio~(RR - right) on the PDEBench 3D Navier-Stokes dataset with $\varepsilon_{rel}=0.10$ using no normalization. The baseline axis ordering results in the highest CR and RR among the investigated axis reorderings. Changing the axis ordering of the tensor results in up to $1.72\times$ the CR and RR of the worst performing axis ordering.}}
                \label{fig:pdebench_compression_reduction_transpose_010}
            \end{figure}

            \Cref{fig:pdebench_time_validation_transpose_010} shows that the baseline axis ordering also results in the lowest compression time among the investigated axis reorderings. The baseline axis ordering achieves $1158.7$ seconds of compression time, whereas the second best axis ordering, Transpose A, results in $1306.7$ seconds of compression time. Compression time increases up to $1992.9$ seconds for worst performing axis ordering, Transpose C. In terms of RTE, all axis reorderings yield almost the same RTE around $0.098$.
            
            \begin{figure}[htbp]
                \centering
                \includegraphics[width=\textwidth]{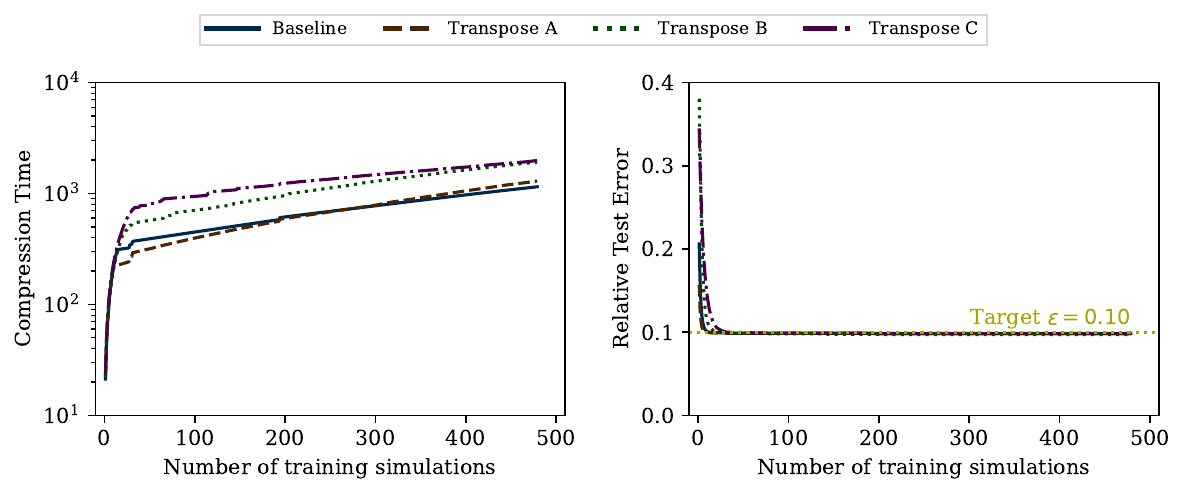}
                \caption{\small\emph{Comparison of various axis orderings in terms of compression time~(left) and relative test error~(RTE - right) on the PDEBench 3D Navier-Stokes dataset with $\varepsilon_{rel}=0.10$ using no normalization. The baseline axis ordering results in the lowest compression time among the investigated axis reorderings. Changing the axis ordering of the tensor results in up to $1.72\times$ the compression time of best performing axis ordering. All axis reorderings result in almost the same RTE.}}
                \label{fig:pdebench_time_validation_transpose_010}
            \end{figure}

        \subsection{Additional PDEBench results}\label{app:pdebench_results}
            This section presents the detailed results discussed in \Cref{sec:pdebench_experiments}. Similar to \Cref{tab:pdebench_results}, we present experiments on the PDEBench 3D turbulent Navier-Stokes simulations with $\varepsilon_{rel}=0.10$ and $\varepsilon_{rel}=0.05$. In contrast to \Cref{sec:pdebench_experiments}, here we provide results for all normalization methods used in the experiments. \Cref{fig:pdebench_compression_reduction_010,fig:pdebench_time_validation_010} show the results for experiments with $\varepsilon_{rel}=0.10$ and \Cref{fig:pdebench_compression_reduction_005,fig:pdebench_time_validation_005} show the results for experiments with $\varepsilon_{rel}=0.10$ and $\varepsilon_{rel}=0.05$.

            \begin{figure}[htbp]
                \centering
                \includegraphics[width=\textwidth]{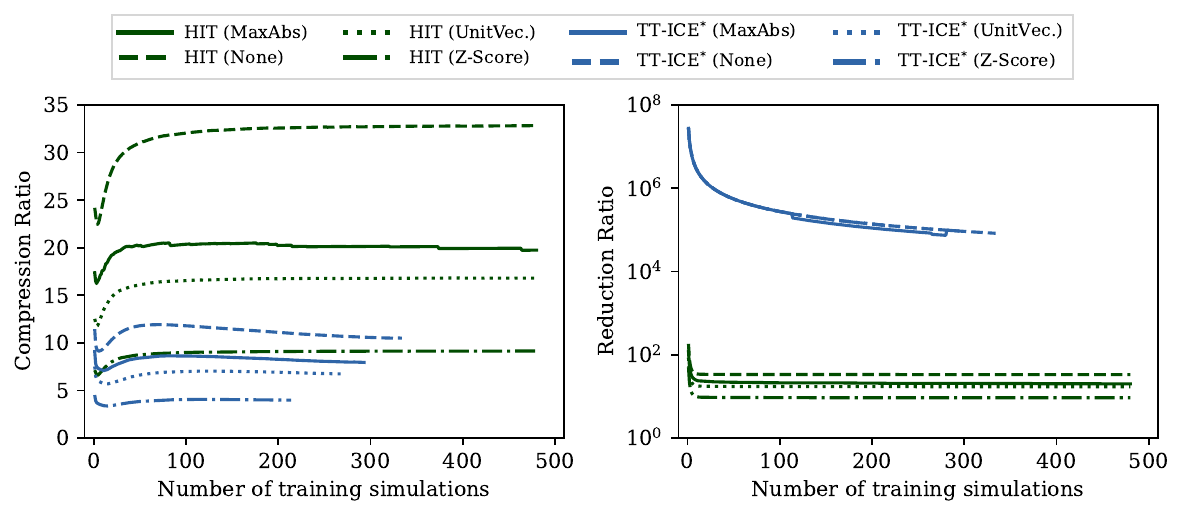}
                \caption{\small\emph{Compression time~(left) and Relative Test Error~(right) of the algorithms on the PDEBench 3D turbulent Navier-Stokes dataset with $\varepsilon_{rel}=0.10$ and using various normalization methods. \texttt{TT-ICE${}^{*}$} offers orders of magnitude higher reduction ratio while yielding less compression ratio compared to \ouralgorithm. All experiments with \texttt{TT-ICE${}^{*}$} terminates prematurely due to timeout. MaxAbs: Maximum absolute value normalization, None: No normalization, UnitVec: Unit vector normalization, ZScore: Z-score normalization. The results are averaged over 5 seeds.}}
                \label{fig:pdebench_compression_reduction_010}
            \end{figure}

            In parallel to the findings in \Cref{fig:pdebench_compression_reduction_simple}, \Cref{fig:pdebench_compression_reduction_010} shows that \ouralgorithm~results in higher compression ratio but multiple orders of magnitude lower reduction ratio against \texttt{TT-ICE${}^{*}$}. This discrepancy is caused by the fact that the maximum size of the latent space is upper bounded by the number of tensors in the accumulation for \texttt{TT-ICE${}^{*}$}. This limitation also results in the high RTE for \texttt{TT-ICE${}^{*}$} as it struggles to reduce the approximation error on the test set anywhere near the target $\varepsilon_{rel}$ in \Cref{fig:pdebench_time_validation_010}. On the other hand, \ouralgorithm~is able to reduce the RTE below the target $\varepsilon_{rel}$ error within a handful of training simulations across all normalization methods.
            As a result, \ouralgorithm~achieves faster compression time across all normalization methods due to the less updates required to reach the target $\varepsilon_{rel}$ error.

            \begin{figure}[htbp]
                \centering
                \includegraphics[width=\textwidth]{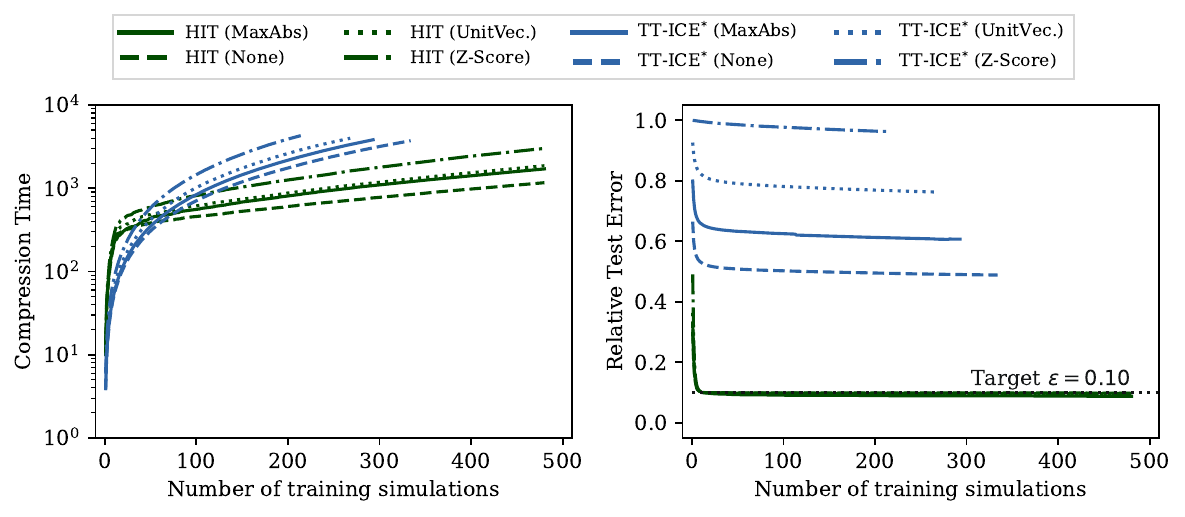}
                \caption{\small\emph{Comparison of compression time and mean validation error of algorithms using the PDEBench 3D Navier-Stokes dataset with $\varepsilon_{rel}=0.10$ and using various normalization methods. TT-ICE${}^{*}$ fails to reduce the approximation error on the test set whereas \ouralgorithm achieves below $\varepsilon_{rel}$ error within a handful of training simulations. This results in less updates to \ouralgorithm and therefore less compression time. All experiments with \texttt{TT-ICE${}^{*}$} terminates prematurely due to timeout. Please refer to the legend of \Cref{fig:pdebench_compression_reduction_010} for the normalization methods. The results are averaged over 5 seeds.}}
                \label{fig:pdebench_time_validation_010}
            \end{figure}

            A similar trend is observed in \Cref{fig:pdebench_compression_reduction_005} and \Cref{fig:pdebench_time_validation_005} for the experiments with $\varepsilon_{rel}=0.05$. \texttt{TT-ICE${}^{*}$} offers orders of magnitude higher reduction ratio while yielding less compression ratio compared to \ouralgorithm.
            Similarly, \texttt{TT-ICE${}^{*}$} fails to reduce the approximation error on the test set whereas \ouralgorithm~achieves below $\varepsilon_{rel}$ error within a handful of training simulations. This issue becomes a serious bottleneck as \texttt{TT-ICE${}^{*}$} hardly compresses 200 simulations out of 480 during the allocated 4-day maximum walltime. On the other hand, \ouralgorithm~only fails at z-score normalization due to insufficient memory.
            
            \begin{figure}[htbp]
                \centering
                \includegraphics[width=\textwidth]{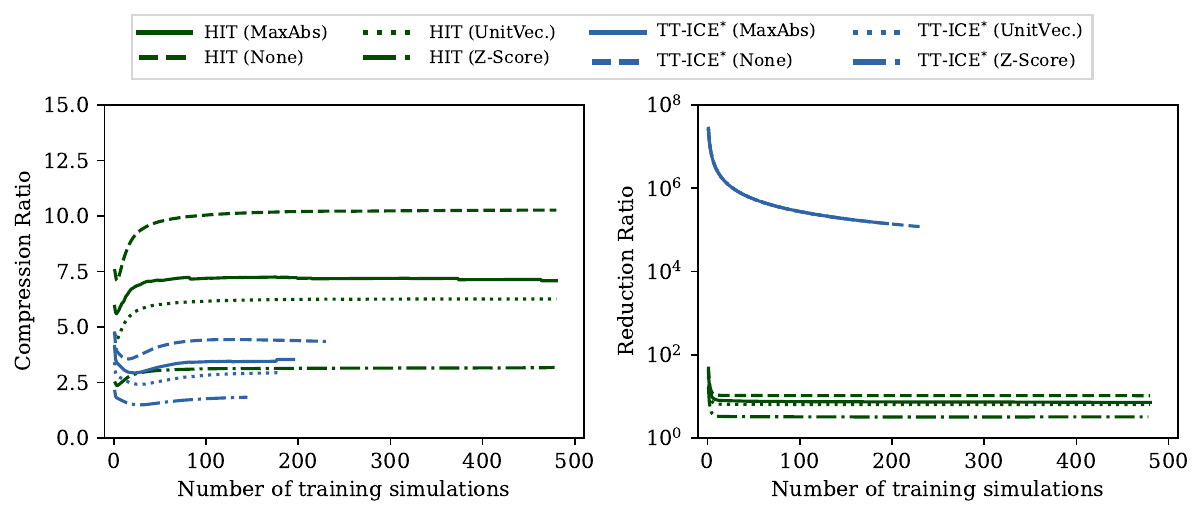}
                \caption{\small\emph{Compression time~(left) and Relative Test Error~(right) of the algorithms on the PDEBench 3D turbulent Navier-Stokes dataset with $\varepsilon_{rel}=0.05$ and using various normalization methods. \texttt{TT-ICE${}^{*}$} offers orders of magnitude higher reduction ratio while yielding less compression ratio compared to \ouralgorithm. All experiments with \texttt{TT-ICE${}^{*}$} terminates prematurely due to timeout. \ouralgorithm~only fails at z-score normalization due to insufficient memory. Please refer to the legend of \Cref{fig:pdebench_compression_reduction_010} for the normalization methods. The results are averaged over 5 seeds.}}
                \label{fig:pdebench_compression_reduction_005}
            \end{figure}
            \begin{figure}[htbp]
                \centering
                \includegraphics[width=\textwidth]{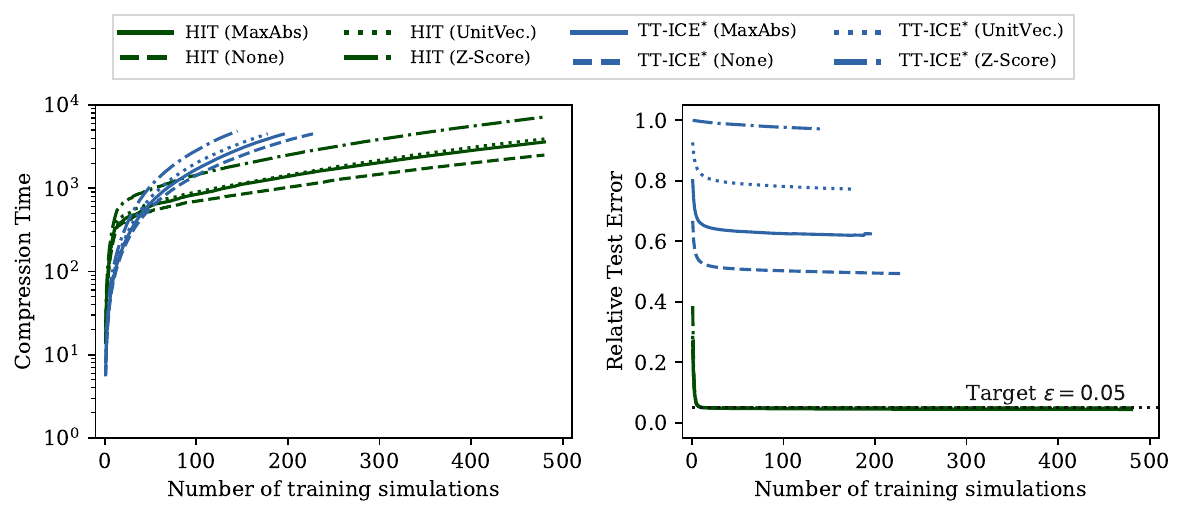}
                \caption{\small\emph{Comparison of compression time~(left) and Relative Test Error~(right) of algorithms using the PDEBench 3D Navier-Stokes dataset with $\varepsilon_{rel}=0.05$ and using various normalization methods. TT-ICE${}^{*}$ fails to reduce the approximation error on the test set whereas \ouralgorithm achieves below $\varepsilon_{rel}$ error within a handful of training simulations. This results in less updates to \ouralgorithm and therefore less compression time. All experiments with \texttt{TT-ICE${}^{*}$} terminates prematurely due to timeout. \ouralgorithm~only fails at z-score normalization due to insufficient memory. Please refer to the legend of \Cref{fig:pdebench_compression_reduction_010} for the normalization methods. The results are averaged over 5 seeds.}}
                \label{fig:pdebench_time_validation_005}
            \end{figure}

        \subsection{Additional self-oscillating-gel simulations results}\label{app:gels_results}
            This section presents the detailed results discussed in \Cref{sec:gel_experiments}. Similar to \Cref{tab:catgel_results}, we present experiments on the self oscillating gel simulations with $\varepsilon_{rel}=0.10$ and $\varepsilon_{rel}=0.01$. However, in this section we provide results for z-score normalization and unit vector normalization in addition to experiments without any normalization. \Cref{fig:catgel_compression_reduction_010,fig:catgel_time_validation_010} show the results for experiments with $\varepsilon_{rel}=0.10$ and \Cref{fig:catgel_compression_reduction_001,fig:catgel_time_validation_001} show the results for experiments with $\varepsilon_{rel}=0.01$.

            \Cref{fig:catgel_compression_reduction_010} shows the compression ratio and reduction ratio of the algorithms on the self-oscillating gel dataset with $\varepsilon_{rel}=0.10$ and using all investigated normalization methods. At this target relative error level, both algorithms successfully complete the compression task for all normalization methods. \texttt{TT-ICE${}^{*}$} offers $3.3-3.5\times$ the CR, and $4.6-6.3\times$ the RR of \ouralgorithm.

            \begin{figure}[htbp]
                \centering
                \includegraphics[width=\textwidth]{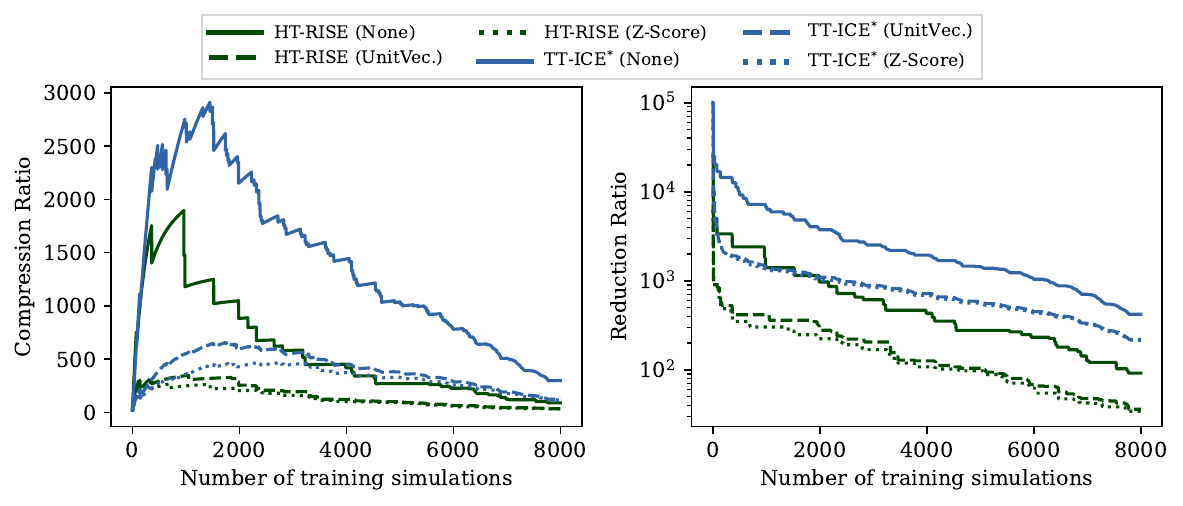}
                \caption{\small\emph{Compression ratio~(CR - left) and Reduction ratio~(RR - right) of the algorithms on the self-oscillating gel dataset with $\varepsilon_{rel}=0.10$ and using various normalization methods. \texttt{TT-ICE${}^{*}$} offers orders of magnitude higher reduction ratio while yielding less compression ratio compared to \ouralgorithm. All experiments with \texttt{TT-ICE${}^{*}$} terminates prematurely due to timeout. MaxAbs: Maximum absolute value normalization, None: No normalization, UnitVec: Unit vector normalization, ZScore: Z-score normalization. The results are averaged over 5 seeds.}}
                \label{fig:catgel_compression_reduction_010}
            \end{figure}

            \Cref{fig:catgel_time_validation_010} shows the compression time and relative test error of the algorithms on the self-oscillating gel dataset with $\varepsilon_{rel}=0.10$ with all investigated normalization methods. Both methods were able to reduce the approximation error below the target $\varepsilon_{rel}$ threshold for all normalziation methods within similar number of training simulations. Due to the low dimensionality of the problem, \texttt{TT-ICE${}^{*}$} resulted in a lower compression time. \ouralgorithm~takes $2.9-5.4\times$ the time it takes for \texttt{TT-ICE${}^{*}$} to compress the dataset.
            
            \begin{figure}[htbp]
                \centering
                \includegraphics[width=\textwidth]{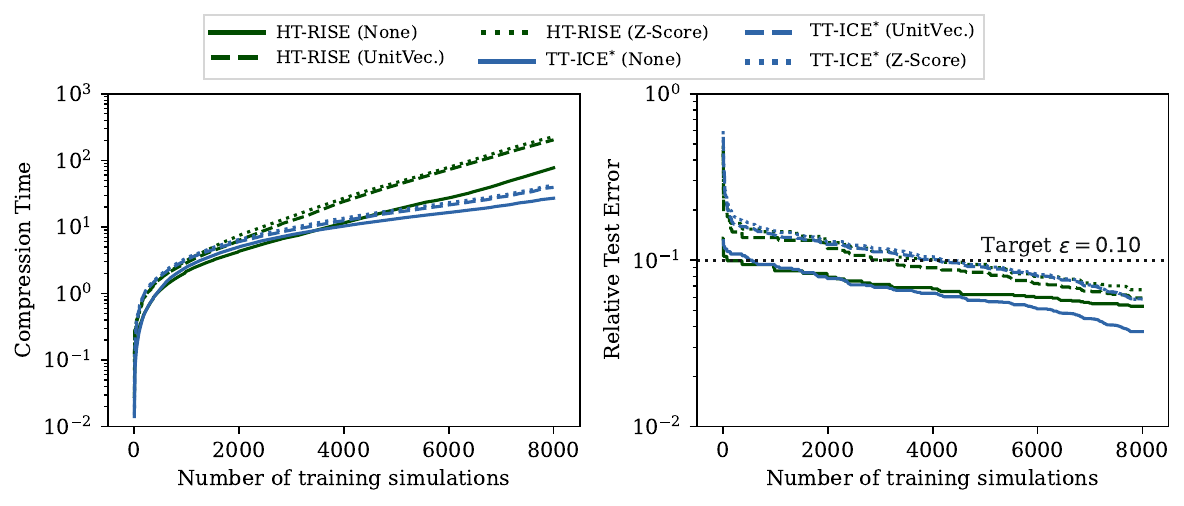}
                \caption{\small\emph{Compression time~(left) and Relative Test Error~(right) of the algorithms on the self-oscillating gel dataset with $\varepsilon_{rel}=0.10$ and using various normalization methods. TT-ICE${}^{*}$ offers orders of magnitude higher reduction ratio while yielding less compression ratio compared to \ouralgorithm. All experiments with \texttt{TT-ICE${}^{*}$} terminates prematurely due to timeout. Please refer to the caption of \Cref{fig:catgel_compression_reduction_010} for normalization methods. The results are averaged over 5 seeds.}}
                \label{fig:catgel_time_validation_010}
            \end{figure}

            \Cref{fig:catgel_compression_reduction_001} shows the compression ratio and reduction ratio of the algorithms on the self-oscillating gel dataset with $\varepsilon_{rel}=0.01$ and using all investigated normalization methods. As the target relative error level becomes tighter, the compression ratio and reduction ratio of the algorithms decrease significantly. Furthermore, \texttt{TT-ICE${}^{*}$} runs into timeout for unit vector and z-score normalizations. In contrast to that, \ouralgorithm~completes the compression task for all normalization methods without running into any issues. For all normalization methods \texttt{TT-ICE${}^{*}$} offers $2.5-6.6\times$ the CR of \ouralgorithm~and $9.5-21.8\times$ the RR of \ouralgorithm. 

            \begin{figure}[htbp]
                \centering
                \includegraphics[width=\textwidth]{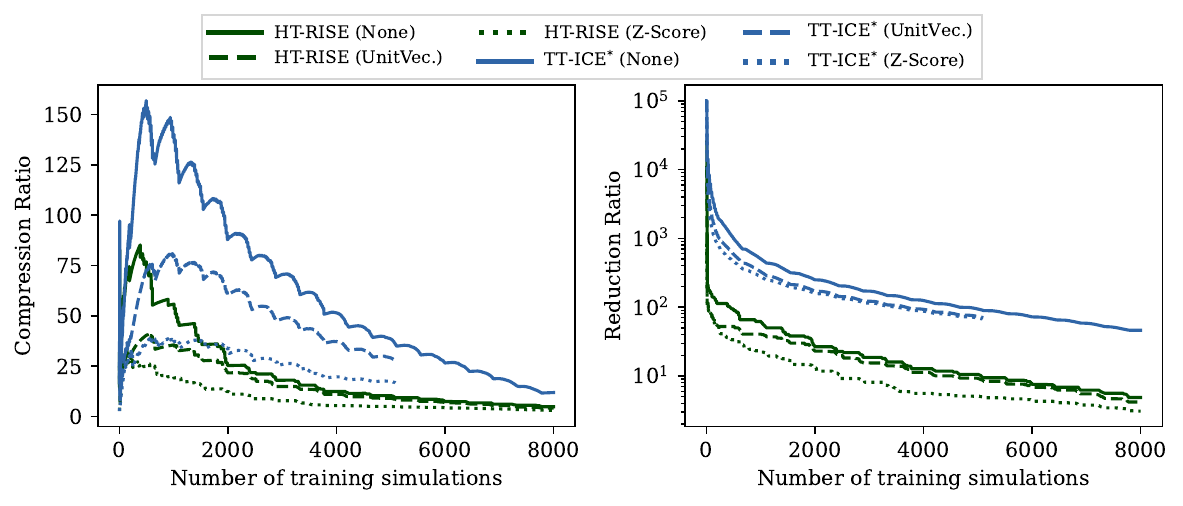}
                \caption{\small\emph{Compression ratio~(CR - left) and reduction ratio~(RR - right) of the algorithms on the self-oscillating gel dataset with $\varepsilon_{rel}=0.01$ and using various normalization methods. None: No normalization, UnitVec: Unit vector normalization, ZScore: Z-score normalization. TT-ICE${}^{*}$ offers an order of magnitude higher reduction ratio while yielding comparable compression ratio compared to \ouralgorithm.}}
                \label{fig:catgel_compression_reduction_001}
            \end{figure}

            \Cref{fig:catgel_time_validation_001} shows the compression time and relative test error of the algorithms on the self-oscillating gel dataset with $\varepsilon_{rel}=0.01$ and using all investigated normalization methods. Both methods were able to reduce the approximation error below the target $\varepsilon_{rel}$ threshold when no normalization is employed. In addition to that, \ouralgorithm~was able to cross the target $\varepsilon_{rel}$ line with z-score normalization as well. It is not fair to compare the methods in terms of total compression time as \texttt{TT-ICE${}^{*}$} fails to compress the entire dataset due to the maximum walltime limit at two of the normalization methods. However, it is worth noting that in \Cref{fig:catgel_time_validation_001} \texttt{TT-ICE${}^{*}$} runs faster than \ouralgorithm~for all normalization methods for same number of simulations. Note that only \ouralgorithm~with z-score normalization can cross the target $\varepsilon_{rel}$ line in addition to the cases with no normalization.

            \begin{figure}[htbp]
                \centering
                \includegraphics[width=\textwidth]{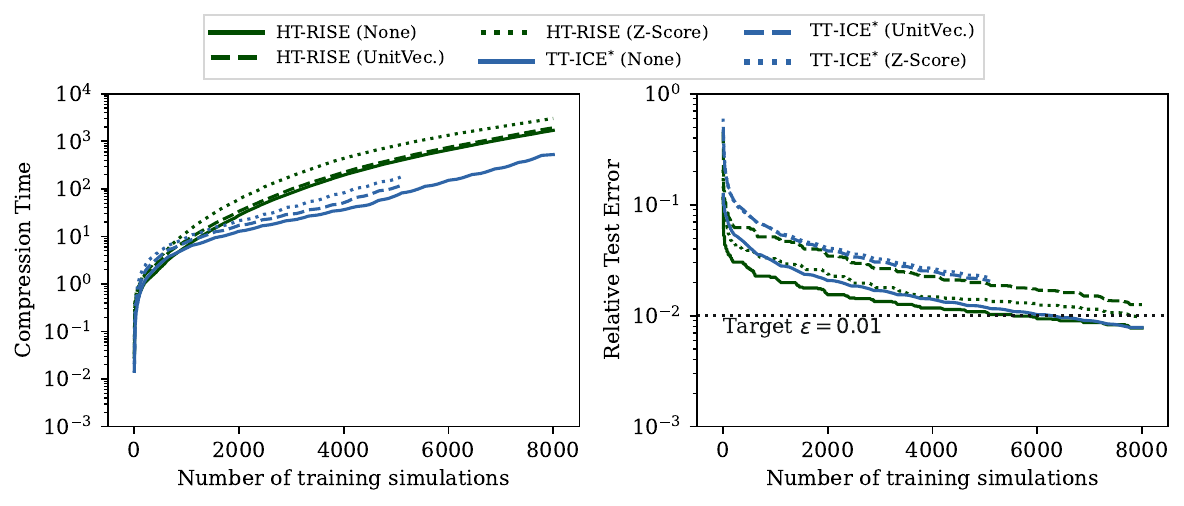}
                \caption{\small\emph{Comparison of compression time~(left) and Relative test error~(right) of algorithms using the Self-oscillating gel simulation dataset with $\varepsilon_{rel}=0.01$ and using various normalization methods. Both methods were able to reduce the approximation error below the target $\varepsilon_{rel}$ threshold when no normalization is employed. In addition to that, \ouralgorithm~was able to cross the target $\varepsilon_{rel}$ line with z-score normalization as well. Due to the low dimensionality of the problem, TT-ICE${}^{*}$ resulted in a lower compression time. Please refer to the legend of \Cref{fig:catgel_compression_reduction_001} for the normalization methods.}}
                \label{fig:catgel_time_validation_001}
            \end{figure}

        \subsection{Additional BigEarthNet results}\label{app:bigearthnet_results}
            This section presents the detailed results discussed in \Cref{sec:multispectral_frames}. Similar to \Cref{tab:bigearthnet_results}, we present experiments on the BigEarthNet dataset with $\varepsilon_{rel}=0.05-0.30$. \Cref{fig:bigearthnet_compression_reduction} presents the results of the experiments on compression ratio and reduction ratio of the algorithms whereas \Cref{fig:bigearthnet_time_validation} shows the results of the experiments considering compression time and relative test error of the algorithms.

            \begin{figure}[htbp]
                \centering
                \includegraphics[width=\textwidth]{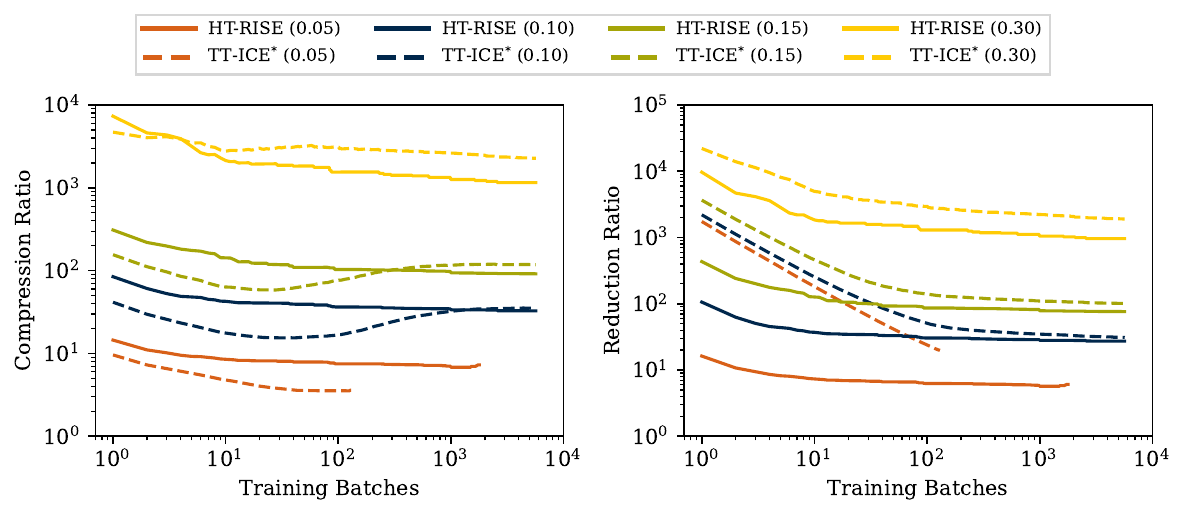}
                \caption{\small\emph{Compression ratio~(CR - left) and reduction ratio~(RR - right) of the algorithms on the BigEarthNet dataset with $\varepsilon_{rel}=0.05-0.30$. \texttt{TT-ICE${}^{*}$} offers higher CR~($1.1-2\times$ that of \ouralgorithm) for all target relative error levels but $\varepsilon_{rel}=0.05$. At $\varepsilon_{rel}=0.05$, \ouralgorithm~achieves $2\times$ the CR of \texttt{TT-ICE${}^{*}$} at the portion that it \texttt{TT-ICE${}^{*}$} is able to compress. \texttt{TT-ICE${}^{*}$} also offers higher RR~($1.1-2\times$ that of \ouralgorithm) for $\varepsilon_{rel}=0.10-0.30$. The results are averaged over 5 seeds.}}
                \label{fig:bigearthnet_compression_reduction}
            \end{figure}

            \Cref{fig:bigearthnet_compression_reduction} shows that at high target relative error levels, \texttt{TT-ICE${}^{*}$} offers higher CR and RR than \ouralgorithm. At $\varepsilon_{rel}=0.30$, \texttt{TT-ICE${}^{*}$} achieves $2274\times$ CR and $1901\times$ RR whereas \ouralgorithm~achieves $1154\times$ CR and $962\times$ RR. The discrepancy between two algorithms reduces as the target relative error level decreases. At $\varepsilon_{rel}=0.15$, \texttt{TT-ICE${}^{*}$} achieves $117\times$ CR and $100\times$ RR whereas \ouralgorithm~achieves $91\times$ CR and $76\times$ RR. At $\varepsilon_{rel}=0.10$, the performance of the two algorithms become even more comparable, where \texttt{TT-ICE${}^{*}$} achieves $35\times$ CR and $31\times$ RR whereas \ouralgorithm~achieves $32\times$ CR and $27\times$ RR. At these target relative error levels, neither algorithm stuggles to compress the dataset. However at $\varepsilon_{rel}=0.05$, \texttt{TT-ICE${}^{*}$} runs into maximum walltime timeout and therefore fails to compress the entire dataset and \ouralgorithm~runs into maximum memory limit and therefore fails to compress the entire dataset. The portion of the dataset that \texttt{TT-ICE${}^{*}$} is able to compress results in $3.59\times$ CR and $19.76\times$ RR whereas \ouralgorithm~results in $7.29\times$ CR and $6.07\times$ RR. However, we need to acknowledge that the portion of the dataset that \texttt{TT-ICE${}^{*}$} is able to compress is significantly smaller than the portion of the dataset that \ouralgorithm~is able to compress.

            \begin{figure}[htbp]
                \centering
                \includegraphics[width=\textwidth]{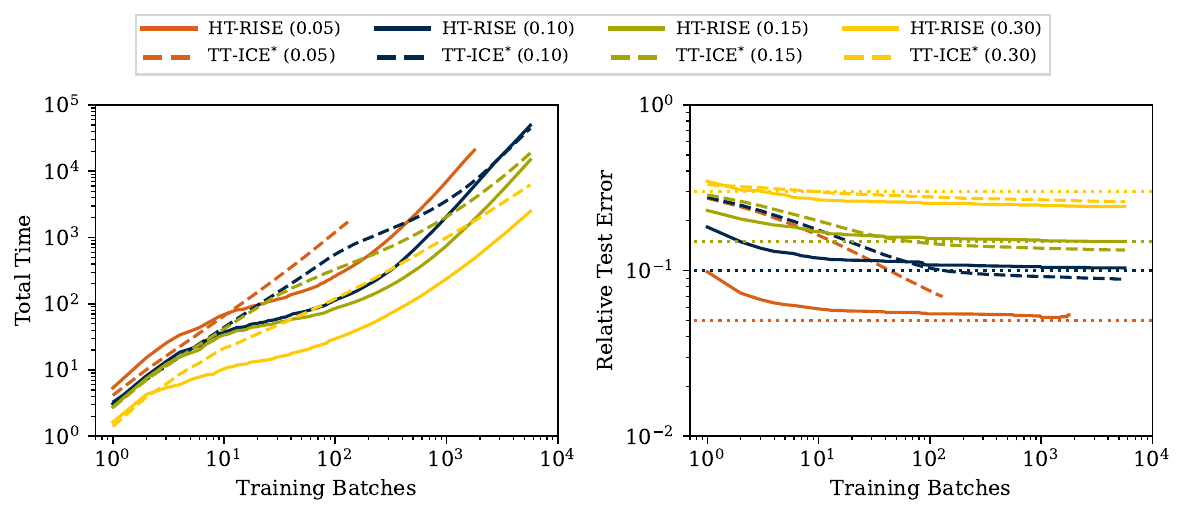}
                \caption{\small\emph{Compression time~(left) and relative test error~(RTE - right) of the algorithms on the BigEarthNet dataset with $\varepsilon_{rel}=0.05-0.30$. For all $\varepsilon_{rel}$ levels, \ouralgorithm~results in lower, if not comparable, compression time compared to \texttt{TT-ICE${}^{*}$}. For all $\varepsilon_{rel}$ levels, \ouralgorithm~is able to reduce the RTE faster than \texttt{TT-ICE${}^{*}$}. Each target relative error level is represented with horizontal dotted lines in their respective colors. The results are averaged over 5 seeds.}}
                \label{fig:bigearthnet_time_validation}
            \end{figure}
        
            \Cref{fig:bigearthnet_time_validation} shows that irrespective of the target relative error level, \ouralgorithm~results in lower, if not comparable, compression time compared to \texttt{TT-ICE${}^{*}$}. At $\varepsilon_{rel}=0.30$ \ouralgorithm~compresses the entire dataset in $2482$s whereas \texttt{TT-ICE${}^{*}$} compresses the entire dataset in $6239$s. At $\varepsilon_{rel}=0.15$ the difference becomes less pronounced as \ouralgorithm~compresses the entire dataset in $14,896$s whereas \texttt{TT-ICE${}^{*}$} compresses the entire dataset in $18,866$s. At $\varepsilon_{rel}=0.10$ \texttt{TT-ICE${}^{*}$} becomes slightly faster as \ouralgorithm~compresses the entire dataset in $49,408$s and \texttt{TT-ICE${}^{*}$} compresses the entire dataset in $44,687$s. As discussed above, at $\varepsilon_{rel}=0.05$ neither algorithm is able to compress the entire dataset. At $\varepsilon_{rel}=0.05$, \texttt{TT-ICE${}^{*}$} compresses 129 batches in $1720$s whereas in the same amount of time \ouralgorithm~compresses 443 batches. At the point of failure, \ouralgorithm~compresses 1781 batches in $20,880$s.

        \subsection{Qualitative MineRL results}\label{app:minerl_results}
            This section presents a qualitative comparison of the compressed frames using reconstructions. \Cref{fig:minerl_qualitative_results} shows the reconstructed video frames from the Basalt MineRL competition dataset using \texttt{TT-ICE${}^{*}$} and \ouralgorithm~algorithms with $\varepsilon_{rel}=0.10$ and $\varepsilon_{rel}=0.30$.
                \begin{figure}[htbp]
                    \centering
                    \begin{subfigure}{0.19\textwidth}\centering
                        \includegraphics[width=\columnwidth]{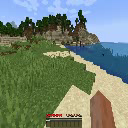}
                        \caption{\small\emph{Downsampled}}
                    \end{subfigure}
                    \begin{subfigure}{0.19\textwidth}\centering
                        \includegraphics[width=\columnwidth]{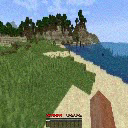}
                        \caption{\small\emph{$\varepsilon_{rel}=0.10$}}
                    \end{subfigure}
                    \begin{subfigure}{0.19\textwidth}\centering
                        \includegraphics[width=\columnwidth]{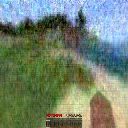}
                        \caption{\small\emph{TT $\varepsilon_{rel}=0.30$}}
                    \end{subfigure}
                    \begin{subfigure}{0.19\textwidth}\centering
                        \includegraphics[width=\columnwidth]{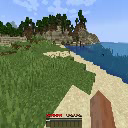}
                        \caption{\small\emph{HT $\varepsilon_{rel}=0.10$}}
                    \end{subfigure}
                    \begin{subfigure}{0.19\textwidth}\centering
                        \includegraphics[width=\columnwidth]{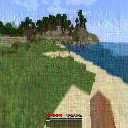}
                        \caption{\small\emph{HT $\varepsilon_{rel}=0.30$}}
                    \end{subfigure}
                    \caption{\small\emph{Reconstructed video frames from Basalt MineRL competition dataset using \texttt{TT-ICE${}^{*}$}~(TT) and \ouralgorithm~(HT) algorithms with $\varepsilon_{rel}=0.10$ and $\varepsilon_{rel}=0.30$. The downsampled frame is provided for baseline comparison. Visual quality of the reconstructed frames is comparable to the downsampled frames except for the case with \texttt{TT-ICE${}^{*}$} at $\varepsilon_{rel}=0.30$. Please refer to \Cref{tab:minecraft_results} and \Cref{fig:minerl_compression_reduction,fig:minerl_time_validation} for quantitative results.}}
                    \label{fig:minerl_qualitative_results}
                \end{figure}
            In \Cref{fig:minerl_qualitative_results}, we observe that \ouralgorithm~in fact results in a better visual quality compared to the frame of \texttt{TT-ICE${}^{*}$} of the same target relative error level. Despite the fact that \texttt{TT-ICE${}^{*}$} achieves almost $3\times$ the CR and the RR of \ouralgorithm, the visual quality of the frame reconstructed by \texttt{TT-ICE${}^{*}$} at $\varepsilon_{rel}=0.30$ is significantly worse than the downsampled frame. The reconstruction of \texttt{TT-ICE${}^{*}$} at $\varepsilon_{rel}=0.30$ is significantly blurrier compared to the reconstruction of \ouralgorithm~at $\varepsilon_{rel}=0.30$.
            In addition to that, we see that the visual quality of the reconstruction from \ouralgorithm~at $\varepsilon_{rel}=0.10$ is comparable, if not identical, to the downsampled frame even though \ouralgorithm~offers a CR and a RR around $1.85\times$. 
    \end{appendices}

\bibliography{references}
\end{document}